\documentclass{TPmod}
\usepackage{url}
\usepackage{setspace}
\usepackage{scrextend}
\usepackage{mathtools}

\usepackage{tikz}
 % change default font
\usepackage[english]{babel}
\usepackage[utf8]{inputenc}
\usetikzlibrary{arrows,decorations.pathmorphing,backgrounds,fit,positioning,shapes.symbols,chains}

\usetikzlibrary{calc}
\usepackage{amssymb}

\usepackage{amsthm}
\usepackage{amssymb}
\usepackage[capitalize]{cleveref}

\newtheorem{ass}[thm]{Assumption}

\newcommand{\Coh}{\mathrm{Coh}}

\DeclareMathOperator{\Aff}{Aff}
\DeclareMathOperator{\Cob}{Cob}
\DeclareMathOperator{\CH}{CH}

\def\bP{\mathbb{P}}

\def\bZ{\mathbb{Z}}
\def\bL{\mathbb{L}}

\def\ints{\mathsf{s}}

\def\AJ{\mathrm{AJ}}
\def\Alb{\mathrm{Alb}}
\def\alb{\mathrm{alb}}
\def\Jac{\mathrm{Jac}}
\def\hom{\mathrm{hom}}
\def\tri{\mathrm{tri}}
\def\GL{\mathrm{GL}}
\def\Trop{\mathrm{Trop}}
\def\trop{\mathrm{trop}}

\def\scrA{\EuScript{A}}

\def\scrY{\EuScript{Y}}
\def\scrM{\EuScript{M}}
\def\scrW{\EuScript{W}}
\def\unob{\mathrm{unob}}
\def\SL{\mathrm{SL}}
\def\loc{\mathrm{loc}}
\def\fib{\mathrm{fib}}

\newcommand{\scrG}{\EuScript{G}}
\newcommand{\scrE}{\EuScript{E}}
\newcommand{\scrF}{\EuScript{F}}

\begin{document}

\title{Lagrangian cobordism and tropical curves}

\author[Sheridan and Smith]{Nick Sheridan and Ivan Smith}

\address{Nick Sheridan, School of Mathematics, University of Edinburgh, Edinburgh EH9 3FD, U.K.}
\address{Ivan Smith, Centre for Mathematical Sciences, University of Cambridge, Wilberforce Road, Cambridge CB3 0WB, U.K.}

\begin{abstract} {\sc Abstract:} We study a cylindrical Lagrangian cobordism group for Lagrangian torus fibres in symplectic manifolds which are the total spaces of smooth Lagrangian torus fibrations.  We use ideas from family Floer theory and tropical geometry to obtain both obstructions to and constructions of cobordisms; in particular, we give examples of symplectic tori in which the cobordism group has no non-trivial cobordism relations between pairwise distinct fibres, and ones in which the degree zero fibre  cobordism group is a divisible group. The results are independent of but motivated by mirror symmetry, and a relation to rational equivalence of $0$-cycles on the mirror rigid analytic space.
\end{abstract}
\maketitle

%\begin{spacing}{0.5}
%\tableofcontents
%\end{spacing}

\section{Introduction} 

\subsection{(Non)-existence theorems}

Classifying Lagrangian submanifolds of a symplectic manifold $(X,\omega)$ up to Hamiltonian isotopy is a notoriously difficult problem, resolved in only a handful of cases. A coarser equivalence relation (originally introduced by Arnol'd and systematically studied by Biran and Cornea) is to ask for the classification up to Lagrangian cobordism. Lagrangian submanifolds $L^-$ and $L^+$  of a symplectic manifold $(X,\omega)$ are Lagrangian cobordant if there is a Lagrangian submanifold $V\subset (\C\times X, \omega_{\C} \oplus \omega)$ which agrees with $\R_- \times L^- \cup \R_+ \times L^+$ outside a compact subset. The definition extends straightforwardly to a cobordism relation between tuples of Lagrangians $\bL^-$ and $\bL^+$, where the cobordism should fibre over parallel half-lines in a neighbourhood of negative and positive real infinity. 

Without any further constraints, the $h$-principle implies that Lagrangian cobordism is an essentially topological relation: more precisely, Lagrangian immersions are governed by an $h$-principle, and the double points of a Lagrangian immersion may be eliminated (changing the global topology, and often losing orientability) by Lagrange surgery.  If, however,  one asks that the cobordisms be geometrically constrained --  exact, monotone, or more generally Floer-theoretically unobstructed -- then striking rigidity phenomena appear. 

The \emph{planar Lagrangian cobordism group} $\Cob(X/\C)$ is usually defined as the abelian group $\Cob(X/\C)$ of formal sums of Lagrangian submanifolds, modulo relations arising from cobordisms. 
However it is natural to consider Lagrangians decorated with certain additional structures, which are called Lagrangian branes. 
Specifically, in the language of \cite{Seidel:graded}, we  assume $c_1(X)=0$, fix an $\infty$-fold Maslov cover $\mathcal{L}$ of the oriented Lagrangian Grassmannian of $X$, and define a Lagrangian brane to be a Lagrangian equipped with a lift to $\mathcal{L}$. 
So we define $\Cob(X/\C)$ to be the free abelian group of formal sums of Lagrangian branes, modulo relations arising from cobordisms equipped with brane structures, cf. Section \ref{Sec:lag_cobordism}. 
Of course the group depends on $\omega$ and $\mathcal{L}$, but we suppress them from the notation. 
Note that our branes come with orientations, so there is a well-defined map $\Cob(X/\C) \to H_n(X;\Z)$, whose kernel we denote by $\Cob(X/\C)_{\hom}$.

\begin{rmk}
There are many possible variations on the data that constitute a brane structure: (relative) spin structures if one wants to define Floer theory in characteristic not equal to $2$, local systems, bounding cochains. 
We will be clear which notion of `brane' we are working with at each stage.
\end{rmk}

For reasons which will be explained in Section \ref{Sec:Cob_Rat_Eq}, in this paper we in fact focus on a variation called the \emph{cylindrical} cobordism group, $\Cob(X/\C^*)$. 
This is almost the same as the planar cobordism group, except the relations come from cobordisms in $\C^* \times X$ rather than $\C \times X$. 

We investigate Lagrangian cobordism relations amongst Lagrangian torus fibres of a symplectic manifold which is the total space of an integrable system with nonsingular fibres.  More concretely, let $B$ be an \emph{oriented tropical affine manifold}, i.e. a manifold with an atlas of charts whose transition functions belong to $\R^n \rtimes \SL(n,\Z)$. Let $X(B) \coloneqq T^*B/T^*_{\Z}B$, and equip $X(B)$ with its canonical symplectic form induced from the exact form on $T^*B$.  Then $X(B)$ is the total space of a Lagrangian torus fibration over $B$; let $F_b$ denote the Lagrangian torus fibre over $b\in B$. 

Because $B$ is oriented, $X(B)$ comes with a natural homotopy class of holomorphic volume forms and hence a natural Maslov cover $\mathcal{L}$, and the fibres $F_b$ come with natural brane structures. 
The \emph{cylindrical Lagrangian cobordism group of fibres} in $X(B)$ is the additive group $\Cob_{\fib}(X(B)/\C^*)$ generated by the branes $[F_b]$, modulo relations arising from cylindrical Lagrangian brane cobordisms which carry fibres over the ends, cf. Section \ref{Sec:lag_cobordism}. 

We start with an existence theorem for such cobordisms. 
Let $B = B(M)  = \R^n / M\cdot \Z^n$ be a tropical affine torus, for a matrix $M\in \GL(n,\R)$.  A \emph{polarization} on $B(M)$ is equivalent to a symmetric positive-definite matrix $g$ such that $g \cdot M$ has integer entries (see Lemma \ref{lem:whenmpol}). Recall that an abelian group $A$ is \emph{divisible} if for each $a\in A$ and $n \in \Z_{>0}$ there exists $b \in A$ such that $n\cdot b = a$.

\begin{thm}[= Corollary \ref{cor:divisible}] \label{thm:divisible}
Suppose $B$ is a tropical affine torus which admits a polarization. Then the group $\Cob_{\fib}(X(B)/\C^*)_{\hom}$ is divisible.
\end{thm}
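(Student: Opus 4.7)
The plan is to combine the divisibility of the base torus $B$ as an abstract abelian group with an additivity relation in the fibre cobordism group, which is realised by lifting an explicit tropical curve in $\R \times B$ to a Lagrangian cobordism. Any element of $\Cob_{\fib}(X(B)/\C^*)_{\hom}$ is a sum $\sum_i n_i [F_{b_i}]$ with $\sum_i n_i = 0$, hence a $\Z$-linear combination of differences $[F_a] - [F_0]$. It therefore suffices, for each $a \in B$ and each $n \geq 1$, to produce $\beta \in \Cob_{\fib}(X(B)/\C^*)$ with $n\beta = [F_a] - [F_0]$. Since the underlying additive group of $B = \R^n/M\Z^n$ is divisible, I pick $c \in B$ with $nc = a$ and take $\beta := [F_c] - [F_0]$, with an appropriately transported brane lift.

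The equality $n\beta = [F_a] - [F_0]$ unfolds to the cobordism relation
\[
[F_{nc}] - n[F_c] + (n-1)[F_0] = 0 \in \Cob_{\fib}(X(B)/\C^*).
\]
Viewed as a $0$-cycle on $B$, the left-hand side is the $n$-th finite difference of a quadratic function sampled along the arithmetic progression $0, c, 2c, \ldots, nc$, and therefore is the tropical divisor of a piecewise-linear function built from translates of the quadratic form associated to the polarization. The symmetric positive-definite class $Q \in H^1(T^*_\Z B)$ provides precisely the Hessian data needed to assemble, via standard tropical theta-function arithmetic, a balanced tropical curve $\Gamma_n \subset \R \times B$ whose labelled legs at $\pm\infty$ lie over the points $nc$, $c$, and $0$ of $B$ with multiplicities $+1$, $-n$, and $+(n-1)$ respectively.

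The main technical engine of the paper (which I take to be established earlier via family Floer theory) should convert such a balanced tropical curve into an honest Lagrangian brane cobordism $V_n \subset \C^* \times X(B)$ whose ends realise precisely the prescribed integer combination of fibres. Applying this construction to $\Gamma_n$ yields the cobordism certifying the displayed relation, so that $[F_c] - [F_0]$ is an $n$-th root of $[F_a] - [F_0]$, and divisibility follows.

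The main obstacle is not the tropical/combinatorial step but the Lagrangian lift. Assembling local Lagrangian pieces over the edges of $\Gamma_n$ into a global cobordism meeting the balancing condition at every vertex requires killing obstructions valued in $H^1(T^*_\Z B)$; the polarization class is exactly the cohomological datum that cancels these obstructions uniformly, which is why the hypothesis appears in the theorem. Granted the tropical-to-Lagrangian correspondence, the rest of the argument is formal.
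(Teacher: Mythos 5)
There is a genuine gap at the heart of your argument. The relation you need,
\[
[F_{nc}] - n[F_c] + (n-1)[F_0] = 0 \in \Cob_{\fib}(X(B)/\C^*),
\]
is exactly the assertion that the map $b \mapsto [F_b]-[F_0]$ is a group homomorphism from $B$ to $\Cob_{\fib}(X(B)/\C^*)_{\hom}$, i.e.\ that the Albanese map of Lemma \ref{lem:albwelldef} is injective. Nothing in the paper (or in the mirror picture) supports this: for the mirror statement, $\CH_0(A)_{\hom}\to \Alb(A)$ for an abelian variety $A$ of dimension $\ge 2$ has an enormous kernel, and classes of the form $[a+b]-[a]-[b]+[0]$ are the standard nonzero elements of that kernel. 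The illustrative Lemma \ref{lem:fixed_sum} only gives additivity for points lying on a common circle of \emph{rational} slope, and your points $0,c,2c,\dots,nc$ will generically lie on no such circle. You offer no construction of the balanced tropical curve $\Gamma_n$ that would realise the relation — "tropical theta-function arithmetic" and "$n$-th finite differences of a quadratic" do not produce one, and if such a curve always existed the theorem would say far more than divisibility. Your account of where the polarization enters is also off: the Lagrangian lift of a trivalent tropical curve is a purely local, unconditional construction (Theorem \ref{thm:trivconst}, Lemma \ref{lem:gluelags}); there is no $H^1(T^*_\Z B)$-valued gluing obstruction that the polarization cancels.

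The paper's actual mechanism is the tropical analogue of Bloch's argument for algebraic varieties: the polarization is used to build a regular tropical $\Theta$-divisor (Lemma \ref{lem:regdiv}), and transverse intersections of $n-1$ generic translates of it give \emph{trivalent} tropical curves $C\subset B$ passing through $b_0$ and through every $b$ in a dense open set (Lemmas \ref{lem:somecurvesexist}, \ref{lem:curvesexist}). One then uses that $\CH_0(C)_{\hom}\cong \Jac(C)$ is a tropical torus, hence divisible, and pushes the resulting relations forward to $\CH_0^{\tri}(B)_{\hom}$ (Proposition \ref{prop:chowdivtri}); finally the trivalent linear equivalences are lifted to embedded Lagrangian cobordisms via Corollary \ref{cor:trivlift}. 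Divisibility of the ambient group $B$ plays no role. If you want to salvage your outline, the step to repair is the middle one: replace the unjustified global additivity on $B$ by divisibility inside the Jacobian of an auxiliary trivalent tropical curve through the two given points.
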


For the next result we modify the notion of Lagrangian brane to incorporate an additional piece of structure: an $\omega$-compatible almost-complex structure $J_L$ such that the Lagrangian bounds no $J_L$-holomorphic discs and intersects no $J_L$-holomorphic spheres. 
We call such $(L,J_L)$ an \emph{unobstructed} Lagrangian brane, and denote the corresponding cylindrical cobordism group by $\Cob^\unob(X/\C^*)$ (Section \ref{Sec:floer_cob}). 
A fibre $F_b$ of an integrable system (with no singular fibres) has this property, for any $\omega$-compatible $J$, and therefore admits a structure of unobstructed Lagrangian brane. 
The condition we call unobstructedness is often called  \emph{tautological} unobstructedness, to distinguish it from the more general notion of \emph{Floer-theoretic} unobstructedness.% (see Remark \ref{rmk:samegr}). 

A \emph{tropical $k$-cycle} in a tropical affine manifold $B$ is a weighted, balanced rational polyhedral complex of dimension $k$. A tropical cycle is called \emph{effective} if all weights are positive, an effective tropical cycle is also called a \emph{tropical subvariety}, and an effective tropical $1$-cycle is a \emph{tropical curve}.   Our primary non-existence theorem for cobordisms is:

\begin{thm}[see Proposition \ref{prop:nosepcob}]\label{thm:none}
Suppose $B$ contains no nonconstant tropical curve. Let $\bL^- = (F_{b_1^-},\ldots,F_{b_k^-})$ and $\bL^+ = (F_{b_1^+},\ldots,F_{b_k^+})$ be tuples of pairwise-distinct fibres in $X(B)$ equipped with their natural brane structures. Then there is no unobstructed cylindrical Lagrangian brane cobordism between $\bL^-$ and $\bL^+$, unless $\bL^-=\bL^+$ as unordered tuples.
\end{thm}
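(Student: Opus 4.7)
The plan is to argue by contradiction, using family Floer theory to convert the cobordism into a rational equivalence of $0$-cycles on the mirror rigid analytic space, whose tropicalization produces the forbidden tropical curve in $B$. So suppose, for contradiction, that there exists an unobstructed cylindrical Lagrangian brane cobordism $V \subset \C^* \times X(B)$ between $\bL^-$ and $\bL^+$.

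First, I would apply family Floer theory. The symplectic manifold $X(B)$ has a mirror rigid analytic space $Y(B)$, together with a tropicalization map $\trop \colon Y(B)^{\mathrm{an}} \to B$ sending each fibre $F_b$ to a point $y(b) \in Y(B)$ with $\trop(y(b)) = b$. Applied in the cylindrical setting for $\C^* \times X(B)$, whose mirror involves $\bG_m \times Y(B)$, the unobstructed brane $V$ should produce a coherent sheaf $\mathcal{E}$ on $\bG_m \times Y(B)$, supported on a $1$-dimensional analytic subvariety $Z$ generically finite of degree $k$ over $\bG_m$. The asymptotic form of $V$ at the two ends of $\C^*$ forces the fibres of $Z \to \bG_m$ at the ends of $\bG_m$ to be the $0$-cycles $\sum_i [y(b_i^-)]$ and $\sum_j [y(b_j^+)]$. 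Completing $\bG_m \subset \bP^1$, this data assembles into a morphism $\varphi \colon \bP^1 \to \Sym^k Y(B)$ with $\varphi(0) = \sum_i [y(b_i^-)]$ and $\varphi(\infty) = \sum_j [y(b_j^+)]$; equivalently, a rational equivalence between these $0$-cycles on $Y(B)$.

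Next, I would tropicalize. Since the tuples $\bL^\pm$ are pairwise distinct, the $0$-cycles $\sum_i [y(b_i^-)]$ and $\sum_j [y(b_j^+)]$ are distinct in $\Sym^k Y(B)$, so $\varphi$ is non-constant. An elementary argument (if the image of the universal divisor $D \subset \bP^1 \times Y(B)$ in $Y(B)$ were $0$-dimensional, $\varphi$ would factor through a finite subset of $\Sym^k Y(B)$ and hence be constant) shows that $\varphi$ sweeps out a genuine $1$-dimensional analytic curve in $Y(B)$ passing through each $y(b_i^{\pm})$. By the Bieri--Groves theorem for analytic curves in rigid analytic varieties, its tropicalization is a balanced rational $1$-dimensional polyhedral complex $C \subset B$ containing the $b_i^{\pm}$ as vertices: this is a non-trivial tropical curve in $B$, contradicting the hypothesis.

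The main technical obstacles I anticipate are: (a) carrying out family Floer theory in the cylindrical Lagrangian cobordism setting to produce the mirror sheaf $\mathcal{E}$ on $\bG_m \times Y(B)$ with $1$-dimensional support $Z$, and identifying its asymptotic fibres over $0, \infty \in \bP^1$ with the prescribed $0$-cycles $\sum_i [y(b_i^\pm)]$; and (b) upgrading the resulting morphism $\bG_m \to \Sym^k Y(B)$ across the ends to a morphism from $\bP^1$. Step (b) --- the identification of cylindrical cobordism with rational equivalence, flagged in the paper's abstract --- is the conceptual heart of the argument; once in place, the passage to a tropical curve in $B$ via tropicalization of an analytic curve is standard.
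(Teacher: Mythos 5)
Your overall strategy --- pass through family Floer theory to an analytic support in $\mathbb{G}_m \times Y(B)$, then tropicalize to manufacture a forbidden tropical curve --- is the same as the paper's. But the step you dispose of in one clause, namely that ``the asymptotic form of $V$ at the two ends of $\C^*$ forces the fibres of $Z \to \bG_m$ at the ends to be the $0$-cycles $\sum_i [y(b_i^-)]$ and $\sum_j [y(b_j^+)]$,'' is the crux of the whole proof and is genuinely false without the distinctness hypothesis. The mirror support $\scrE_V = \{\hat y : HF^*(F_{\hat y},V) \neq 0\}$ can be \emph{empty} even for a non-empty cobordism between fibres: the paper's Remark \ref{rmk:dummy} exhibits a null-cobordism of $F_b \sqcup F_b$ whose Floer cohomology with every point-like brane vanishes. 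What one must actually prove (Proposition \ref{prop:support_not_empty}) is that over each end of multiplicity one there exists a $U_\Lambda$-local system on $S^1_r \times F_{b_i^\pm}$ killing the Floer differential on the identity class; this is an order-by-order construction in the $q_{n+1}$-adic filtration, using the open mapping theorem to establish upper-triangularity and a reverse isoperimetric inequality for convergence, and it breaks down the moment an end has multiplicity $\ge 2$ (Remark \ref{rmk:common_ends}) because the relevant linearization is no longer surjective. So the pairwise-distinctness hypothesis is consumed at the very start of the argument, not merely at the end to see that your $\varphi$ is non-constant. Listing this as ``technical obstacle (a)'' without indicating how it is overcome, or where the hypothesis enters, leaves the proof with a real gap rather than a routine verification.

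Two smaller points of comparison. First, your detour through a compactification $\bG_m \subset \bP^1$, the symmetric product $\Sym^k Y(B)$, and rational equivalence is not needed and adds genuine technical burden: extending the support analytically over $0,\infty \in (\bP^1)^{an}$ is itself a separate argument (Proposition \ref{prop:support_compactifies}), and the paper deliberately quarantines the rational-equivalence picture as motivation (Corollary \ref{cor:cobordism_gives_rational_equiv}) rather than using it in the proof. The paper instead tropicalizes the union of one-dimensional components of $\scrE_V$ containing the ends to get a tropical curve in $\R \times B$ asymptotic to $(-\infty,R)\times\{b_i^-\}$ and $(R,\infty)\times\{b_i^+\}$, then projects to $B$: since $B$ contains no tropical curve the projection is a union of points, so the curve is a union of vertical lines and the negative and positive ends must coincide, a contradiction. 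Your version (project to $Y(B)$ first, then tropicalize) can be made to work, but you should be aware that the support need not be ``generically finite of degree $k$ over $\bG_m$'' globally --- it may have components of other dimensions --- so one must restrict to the one-dimensional components through the ends. Second, the balancing of the tropicalization of an analytic curve over the Novikov field (Theorem \ref{thm:an_trop}) is itself only available in the literature under extra hypotheses, so ``by Bieri--Groves'' is slightly optimistic; the paper flags this explicitly.
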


We give examples in which the hypothesis holds and the conclusion does not follow from purely topological arguments (i.e. a formal /  immersed Lagrangian brane cobordism exists).  We conjecture, \ref{conj:free}, that the hypothesis implies that $\Cob_{\fib}^\unob (X(B)/\C^*) \simeq \bZ^B$ is actually free.  For a concrete class of examples to  compare directly with Theorem \ref{thm:divisible}, we show that a sufficiently irrational tropical affine torus which admits no polarization contains no tropical curve. 

\begin{rmk}
\label{rmk:samegr}
We believe it should be possible to strengthen Theorems \ref{thm:divisible} and \ref{thm:none} by replacing the notion of `cobordism' in the former, and `unobstructed cobordism' in the latter, with the common notion of `Floer-theoretically unobstructed cobordism'. 
\end{rmk}

%\begin{rmk}
%\label{rmk:samegr}
%It would be more satisfying if Theorems \ref{thm:divisible} and \ref{thm:none} concerned the same notion of cobordism. 
%We believe that it should be possible to generalize both theorems to versions which involve the common notion of `Floer-theoretically unobstructed' cobordism, which means that bounding cochains should be included as part of the brane structure.
%In particular, we believe it should be possible to replace `unobstructed' with `Floer-theoretically unobstructed' everywhere in our proof of Theorem \ref{thm:none}, although we have not checked this. 
%We also believe, but do not know how to prove, that the cobordisms we construct in the proof of Theorem \ref{thm:divisible} are Floer-theoretically unobstructed (it is rather unlikely that they are tautologically unobstructed in general). 
%Towards this aim, we can arrange that the cobordisms satisfy a ``local exactness" property, cf. Definition \ref{defn:locally_exact}, which precludes the existence of ``local'' holomorphic discs. 
%We could also easily arrange that they be preserved set-wise by the anti-symplectic involution which acts by negation in the fibre directions, which in light of \cite{Fukaya2017a,Solomon2018a} might help to prove Floer-theoretic unobstructedness. 
%However in view of our inability to prove that our cobordisms are actually Floer-theoretically unobstructed, and the additional technical details that such technology entails, we have decided to work throughout with the simpler notion of tautological unobstructedness.
%\end{rmk}

\subsection{Lagrangian cobordism and tropical Chow}

Our main results are based on an analogy between $\Cob_{\fib}(X(B)/\C^*)$ and the `tropical Chow group' $\CH_0(B)$. 
The latter is the quotient of the free abelian group generated by points $b \in B$ by `tropical linear equivalence', which is the equivalence relation arising from tropical curves in $\R \times B$ which are asymptotic to the given points near $\pm \infty \times B$ (see Section \ref{Sec:Chow} for details).

Naively, the idea is to try to construct comparison maps
\[ \xymatrix{ \CH_0(B)\ar@<.5ex>[rr] && \Cob_{\fib}^{\unob}(X(B)/\C^*)  \ar@<.3ex>[ll]  }\]
which are inverse to each other. 
However we are only able to construct modified versions of these maps, and indeed we do not expect such maps to exist in general. 
We summarize the maps that we are able to construct in Figure \ref{fig:equiv_relations}. 

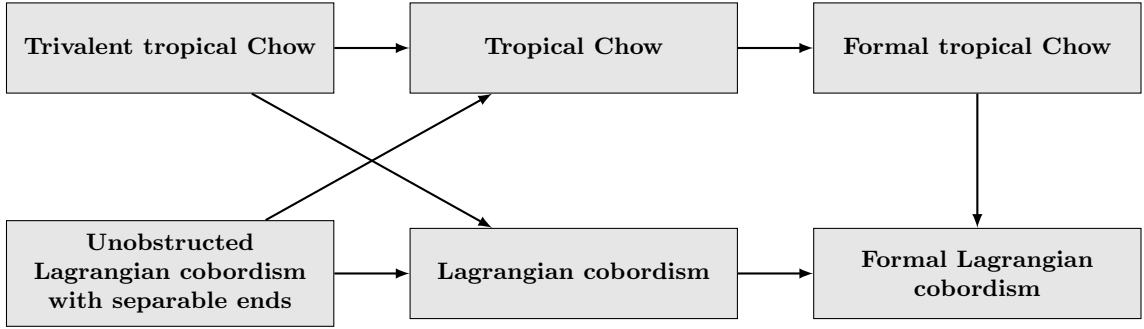
\begin{figure}[h]
\centering
\begin{tikzpicture}
[node distance = 1cm, auto,font=\footnotesize,
% STYLES
every node/.style={node distance=3cm},
% The comment style is used to describe the characteristics of each box
comment/.style={rectangle, inner sep= 5pt, text width=4cm, node distance=0.25cm, font=\scriptsize\sffamily},
% This is used to draw the names
force/.style={rectangle, draw, fill=black!10, inner sep=5pt, text width=4cm, text badly centered, minimum height=1.2cm, font=\bfseries\footnotesize\sffamily}] 

% Draw nodes
\node [force] (cob) {Lagrangian cobordism};
\node [force, above of=cob] (trop) {Tropical Chow};
\node [force,  left=1cm of trop] (trivalent_trop) {Trivalent tropical Chow};
\node[force, right=1cm of trop] (formal_trop) {Formal tropical Chow};
\node [force, left=1cm of cob] (unobs_cob) {Unobstructed Lagrangian cobordism with separable ends};
\node [force, right=1cm of cob] (formal_cob) {Formal Lagrangian cobordism};

% Draw the links between boxes
\path[->,thick] 
(trivalent_trop) edge (trop)
(trivalent_trop) edge (cob)
(trop) edge (formal_trop)
(formal_trop) edge (formal_cob)
(unobs_cob) edge (trop) 
(unobs_cob) edge (cob) 
(cob) edge (formal_cob);

\end{tikzpicture} 
\caption{Cobordism and tropical equivalence relations}
\label{fig:equiv_relations}
\end{figure}

The proof of Theorem \ref{thm:divisible} uses a version of the comparison map $\CH_0 \to \Cob$: by showing there are sufficiently many relations in the tropical Chow group to make it divisible, we conclude that the same must be true in the Lagrangian cobordism group. 
The construction of this comparison map involves building Lagrangian cobordisms from tropical curves, following \cite{Matessi2018,Mikhalkin2018, Mak-Ruddat}.
We only build such cobordisms for \emph{trivalent} tropical curves (see Figure \ref{fig:equiv_relations} and the explanation in Section \ref{Sec:CurvetoCob}), but this suffices for the proof of the theorem.

The proof of Theorem \ref{thm:none} uses the partial comparison map in the other direction, $\Cob \to \CH_0$: by showing there are no relations in the tropical Chow group, we conclude that the same must be true in the Lagrangian cobordism group.
We now describe the construction of this map, which involves building tropical curves in $\R \times B$ from Lagrangian cobordisms in $\C^* \times X(B)$. 

\begin{rmk}
We alert the reader that the construction uses a version of the result `the tropicalization of an analytic curve is a tropical curve' which is well-known to experts but has only appeared in the literature under extra hypotheses. See Section \ref{sec:nonArchSYZ} for details.
\end{rmk}

The mirror to $X(B)$ is a rigid analytic space defined over the universal one-variable Novikov field $\Lambda = \Bbbk\laurent{t^\R}$, where $\Bbbk$ is an algebraically closed field of characteristic $2$. 
The Novikov field comes with a non-Archimedean valuation $val: \Lambda^*\to \R$, and we let $U_{\Lambda} = val^{-1}(0)$ denote the unitary subgroup. 
As a set, the mirror rigid analytic space $Y(B)$ is the total space of the local system $T_\Z B \otimes_\Z U_\Lambda$; in particular it comes with a `non-Archimedean SYZ fibration' $val: Y(B) \to B$, locally modeled on the map $(\Lambda^*)^n \to \R^n$ taking valuations of all coordinates. 
If $B$ is a tropical affine torus, then $Y(B)$ is an analytic torus; and if $B$ is polarized, then $Y(B)$ is the analytification of an abelian variety \cite[Theorem 6.6.1]{FvdP}. 
The \emph{tropicalization} of an analytic subset of $Y(B)$ is its image in $B$, which is expected carry the structure of a tropical subvariety of the same dimension.

The mirror to $\C^*$ is the rigid analytic space $\mathbb{G}_m$ with underlying set $\Lambda^*$, and the mirror to $\C^* \times X(B)$ is $\mathbb{G}_m \times Y(B)$. 
Given an unobstructed Lagrangian brane cobordism $V\subset \C^*\times X(B)$, there is a corresponding ``family Floer sheaf" over $\mathbb{G}_m \times Y(B)$ \cite{Abouzaid:ICM}, whose support is an analytic subset $\scrE_V \subset \mathbb{G}_m \times Y(B)$. 
The tropical curve in $\R \times B$ associated to $V$ is the tropicalization of the union of one-dimensional irreducible components of $\scrE_V$. 
In order to show it has the desired form near infinity, we need to assume that $V$ has `separable ends' (see Figure \ref{fig:equiv_relations} and Section \ref{Sec:separable} for explanation).
This is satisfied \emph{a fortiori} in the setting of Theorem \ref{thm:none}, where the ends are assumed to be distinct.

%\begin{example}
%Let $V'$ be a cobordism which is the surgery of a cobordism $V$ with a compact Lagrangian $S^1_r \times B$, where $S^1_r \subset \C^*$ is a circle fibre and $B \subset X(B)$ is the zero-section. Suppose furthermore that $V'$ is unobstructed (although this will not hold in general).
%Then one expects the support of $\scrE_{V'}$ to be the union of the support of $\scrE_V$ with the subset $\{t^r\} \times Y(B) \subset \mathbb{G}_m \times Y(B)$. 
%So the tropicalization of $V'$ would be the union of the tropicalization of $V$ with the hyperplane $\{r\} \times B$, and have mixed dimension. 
%However, because we only take the tropicalization of the union of one-dimensional irreducible components, the tropical curves associated to $V$ and $V'$ are the same. 
%\end{example}

\begin{rmk} \label{rmk:mirror_inexact}
Our results apply just as well when the canonical symplectic form $\omega_{can}$ on $X(B)$ is replaced with a symplectic form $\omega_{\alpha} = \omega_{can} + p^*\alpha$ for a closed 2-form $\alpha$ representing a non-zero  class in $H^2(B;\R)$. Then $(X(B),\omega_\alpha)$ admits the structure of a Lagrangian fibration but may have no Lagrangian section,  so a mirror ``space" will have no structure sheaf: it is a non-commutative deformation of $Y(B)$, realised in \cite{Abouzaid:ICM} by a gerbe on $Y(B)$. The non-commutative space still has a covering by affinoid domains, and one can define the analogue of a coherent sheaf and the tropicalization of its support. 
\end{rmk}

\subsection{Cobordism and rational equivalence}
\label{Sec:Cob_Rat_Eq}

We now explain the conjectural mirror relationship between cylindrical cobordism groups of fibres and rational equivalence of $0$-cycles, and how this motivates our results. 

For any analytic variety $Y$, a $0$-cycle is a finite formal linear combination of points of $Y$; two $0$-cycles $Z_0, Z_{\infty}$ are rationally equivalent if  there is a $1$-cycle $\Sigma \subset \bP^1 \times Y$ dominant over $\bP^1$ whose fibres over $0, \infty \in \bP^1$ (counted with multiplicity) are $Z_0, Z_\infty$. (More precisely, this geometric relationship generates the equivalence relation of rational equivalence.)  The Chow group $\CH_0(Y)$ is the group of rational equivalence classes of $0$-cycles on $Y$.

Now suppose that $X$ and $Y$ are homologically mirror.  
Using the language of Sylvan \cite{Sylvan}, the mirror to $\bP^1$ is the partially wrapped Fukaya category $\scrW(\C^*,\sigma)$, where the cylinder $\C^*$ is equipped with a single `stop' on each ideal boundary component. 
Therefore the mirror to $\bP^1 \times Y$ is $\scrW(\C^* \times X,\sigma \times X)$. 
A cobordism with one negative and one positive end can naturally be considered as an object of the latter category, and therefore is mirror to an object of $D^b\Coh(\bP^1 \times Y)$, which can be considered as a `family of objects of $D^b\Coh(Y)$ parametrized by $\bP^1$'.  
This is a hint that cylindrical cobordism should bear some mirror relation with rational equivalence, which is one of the main reasons we study cylindrical (rather than, e.g., planar) cobordism.\footnote{On the other hand, a planar cobordism with $n$ ends corresponds to an object of $\scrW_{\sigma \times X}(\C \times X)$ where $\sigma$ consists of $n$ stops in the ideal boundary of $\C$ -- this corresponds to a generalized $n$-triangle in the Fukaya category. So while there is reason to hope that planar or cylindrical cobordism might generate all relations in the Grothendieck group of the Fukaya category, planar cobordism does not reflect the geometry of rational equivalence on the mirror in the same way.}

More precisely, we expect that $\Cob^\unob(X/\C^*)$ should be mirror to $K(Y)$, the Grothendieck group of coherent sheaves on $Y$; and in accordance with the SYZ conjecture we expect that $\Cob^\unob_\fib(X(B)/\C^*)$ should be mirror to the subgroup of $K(Y(B))$ generated by coherent sheaves with $0$-dimensional support, which is isomorphic to $\CH_0(Y(B))$. 
We elaborate on this circle of ideas in Section \ref{Sec:cob_k}, and give further evidence for our expectations by constructing a comparison map between cylindrical Lagrangian cobordism and the Grothendieck group of the category of proper modules over the Fukaya category, cf. Proposition \ref{prop:cob_rel_groth}. 

Now we state the theorems about Chow groups which motivate our main results. 
Let $\CH_0(Y)_{\hom}$ denote the group of homologically trivial $0$-cycles, i.e. the kernel of the natural degree map
\[
deg: \CH_0(Y) \to \Z.
\]
The following result is well-known, but we reproduce the proof because the proofs of Theorems \ref{thm:divisible} and  \ref{thm:none} are modeled on it:

\begin{prop} \label{prop:mirror_statements} Let $Y$ be an analytic variety.%algebraic variety over an algebraically closed field. 
\begin{enumerate}
\item (see \cite[Lemma 1.3]{Bloch1976} or \cite[Exercise 1.6.6]{Fulton:intersection_theory}) If $Y$ is algebraic over an algebraically closed field, then $\CH_0(Y)_{\hom}$ is divisible.
\item If $Y$ contains no curve, then the relation of rational equivalence on $0$-cycles is trivial: so $\CH_0(Y) \simeq \Z^Y$.
\end{enumerate}
\end{prop}

\begin{proof}  The group $\CH_0(Y)_\hom$ is generated by classes $[p-q]$ where $p$ and $q$ are closed points of $Y$, so it suffices to show that such classes are $\ell$-divisible for any $\ell>0$. One can find a smooth curve $C$ and a morphism $f: C \to Y$ whose image contains both $p$ and $q$.  There is a push-forward map $\CH_*(C) \to \CH_*(Y)$. On the Jacobian $\Pic^0(C)$, the line bundle $\mathcal{O}(p-q)$ is $\ell$-divisible, so one can take a divisor $D_{\ell}$ on $C$ with the property that $\ell\cdot D_{\ell} \simeq (p-q)$. Pushing forward this relation shows that $[p-q] \in \CH_0(Y)_{\hom}$ is $\ell$-divisible.

For the second statement, any non-trivial relation between $0$-cycles would come from a curve on $\bP^1\times Y$. The projection of any component of this curve to $Y$ must be constant, hence the curve is a union of $\bP^1\times\{pt\}$ cycles, which implies that it yielded a trivial relation in the Chow group.
\end{proof} 

We remark that a general rigid analytic torus of dimension $n$ contains no analytic subset of dimension $\neq 0,n$, cf. \cite[Ex. 6.6.5]{FvdP}.

We are able to make the relation between cylindrical cobordism and rational equivalence precise under certain hypotheses. 
We prove that the support of the family Floer sheaf $\scrE_V$ compactifies to define an analytic subset of $(\bP^1)^{an}\times Y(B)$.  We deduce, cf. Corollary \ref{cor:cobordism_gives_rational_equiv}, that if $V\subset \C^*\times X(B)$ is an unobstructed Lagrangian cobordism with distinct ends, i.e. between tuples of pairwise distinct  Lagrangian fibres $(F_{b_1^-},\ldots, F_{b_k^-})$ and $(F_{b^+_1}, \ldots, F_{b_k^+})$, then the effective, reduced $0$-cycles $\sum_{j=1}^k [b^-_j]$ and $\sum_{j=1}^k [b^+_j]$ on $Y(B)$ are rationally equivalent. 
 In particular, the northeast arrow in Figure \ref{fig:equiv_relations} factors through the Chow group $\CH_0(Y(B))$ (strictly speaking, we have only proved this if `separable ends' is replaced by `distinct ends').

We take up this circle of ideas in the context of K3 surfaces in a separate paper \cite{SS:K3_spherical_ring}. 
\vspace{1em}

\paragraph{\textbf{Acknowledgements.}} Theorem \ref{thm:none} realises a suggestion of Paul Seidel, who also suggested considering cylindrical rather than planar cobordism.  We are grateful to Mohammed Abouzaid, Denis Auroux, Jeff Hicks, Daniel Huybrechts, Joe Rabinoff, Dhruv Ranganathan, Tony Scholl, Paul Seidel, Harold Williams and Tony Yue Yu for helpful conversations. 

N.S. was partially supported by a Royal Society University Research Fellowship.  I.S. was partially supported by a Fellowship from E.P.S.R.C. and by the National Science Foundation under Grant No. DMS-1440140, whilst in residence at the Mathematical Sciences Research Institute in Berkeley, California, during the Spring 2018 semester.

\section{Cobordism preliminaries}

\subsection{Tropical affine manifolds}

Recall that the group $\R^n \rtimes \GL(n,\Z)$ acts on $\R^n$ by affine automorphisms, $m \mapsto Am + b$. 

\begin{defn}
A \emph{tropical affine manifold} is a smooth manifold equipped with an atlas with transition functions in $\R^n \rtimes \GL(n,\Z)$.
\end{defn}

\begin{example}
\label{eg:tori}
Associated to a matrix $M \in \GL(n,\R)$ we have a tropical affine manifold $B(M) := \R^n/(M \cdot \Z^n)$. 
We call such a manifold a \emph{tropical affine torus}. 
\end{example}

A tropical affine manifold $B$ comes equipped with a subsheaf $\Aff_B \subset C^\infty_B$ of the sheaf of smooth functions on $B$, consisting of those functions which are affine-linear with integer slope in any coordinate chart; 
%In fact, the structure of a tropical affine manifold is completely determined by the data of this subsheaf.
 a sub-bundle $T^*_\Z B\subset T^* B$ consisting of the differentials of sections of $\Aff_B$; and a sub-bundle $T_\Z B \subset TB$ consisting of those tangent vectors on which all elements of $T^*_\Z B$ evaluate to integers. 
The latter two are local systems of free abelian groups of rank $n$.

\begin{defn}
A \emph{tropical affine map} between tropical affine manifolds $B_1$ and $B_2$ is a smooth map $B_1 \to B_2$ which is affine with integer linear part in any choice of affine charts.
\end{defn}

Note that a map $f: B_1 \to B_2$ is tropical affine if and only if $f^* (\Aff_{B_2}) \subset \Aff_{B_1}$.

\begin{lem}
Tropical affine tori $B(M)$ and $B(N)$ are isomorphic if and only if $[M]=[N]$ in $\GL(n,\Z) \backslash \GL(n,\R) / \GL(n,\Z)$.
\end{lem}
\begin{proof}
If $M=ANB$ for $A,B \in \GL(n,\Z)$, then the map
\begin{align*}
\R^n/(M \cdot \Z^n) &\to \R^n/(N \cdot \Z^n) \\
x & \mapsto A^{-1}\cdot x
\end{align*}
is well-defined, because $A^{-1}\cdot (M\cdot \Z^n) = N\cdot (B \cdot \Z^n) = N \cdot \Z^n$; and it is a tropical affine map because $A^{-1}$ has integer entries. 
The converse is clear.
\end{proof}

\subsection{Albanese and Picard varieties}

Let $B$ be a tropical affine manifold. 
Given a one-chain $\gamma$ in $B$, there is an integration map
\begin{equation}
\label{eqn:intmap}
 \int_\gamma: H^0(T^*_\Z B) \to \R.
\end{equation}
In the case that $\gamma$ is closed, we obtain a pairing
\[ \int: H^0(T^*_\Z B) \otimes H_1(B;\Z) \to \R,\]
which is analogous to the integration pairing between holomorphic differential 1-forms and 1-cycles on a complex manifold. 
We define the \emph{tropical Albanese variety}
\[ \Alb(B) := \Hom(H^0(T^*_\Z B),\R) / H_1(B;\Z), \]
and the \emph{tropical Picard variety}\footnote{Following \cite[Section 5]{Mikhalkin2008}, the Picard variety is the connected component of the identity in the Picard group $\Pic(B) := H^1(\Aff_B)$, which is the kernel of the `tropical Chern character map' $H^1(\Aff_B) \to H^1(T^*_\Z B)$, as one sees using the long exact sequence of sheaf cohomology groups associated to the short exact sequence of sheaves $0 \to \R \to \Aff \to T^*_\Z \to 0$.}
\[ \Pic^0(B) := \Hom(H_1(B;\Z),\R)/H^0(T^*_\Z B).\]

In general it is not clear that these topological spaces will even be manifolds. 
However when the integration pairing is a perfect pairing (after tensoring with $\R$), they come equipped with natural structures of tropical affine tori. 
This is the case, for example, when $B$ is itself a tropical affine torus.

Let $C_0(B)$ denote the free abelian group generated by points of $B$, and $\tilde{C}_0(B)$ the kernel of the map $C_0(B) \to \Z$ summing the coefficients of the points. 
Then we have a well-defined map
\begin{align*}
\alb: \tilde{C}_0(B) & \to \Alb(B) \\
\alb(a)(\alpha) & := \int_\gamma \alpha, \text{ where $\partial \gamma = a$.}
\end{align*}

In particular, given a basepoint $b_0 \in B$, we obtain a tropical affine map
\begin{align*}
\alb:B & \to \Alb(B) \\
\alb(b) &:= \alb(b-b_0).
\end{align*}
called the \emph{tropical Albanese map}. 

\begin{rmk}
If $B$ is a tropical affine torus, then the Albanese map defines an isomorphism $B \simeq \Alb(B)$. 
On the other hand, if $B=B(M)$ then we have $\Pic^0(B) \simeq B(M^T)$.
\end{rmk}

We observe that any tropical affine map $f: B_1 \to B_2$ induces tropical affine maps
\begin{align*}
f_*: \Alb(B_1) & \to \Alb(B_2), \\
f^*: \Pic^0(B_2) & \to \Pic^0(B_1),
\end{align*}
and $f_*$ respects the tropical Albanese map in the obvious sense.

\subsection{Lagrangian torus fibrations}

Let $p: (X,\omega) \to B$ be a completely integrable system, i.e. a smooth Lagrangian torus fibration. 
We then have action-angle co-ordinates on $X$, and the action co-ordinates on the base $B$ endow it with the structure of a tropical affine manifold. 

%There may exist several Lagrangian torus fibrations $p: (X,\omega) \to B$ inducing the same tropical affine structure on $B$. 
%However there is a canonical one, which is characterized by the property that it admits a Lagrangian section. 
%It is symplectomorphic to $X(B) := T^*B/T^*_\Z B$ equipped with the symplectic form $\omega_{can}$ inherited from the canonical exact symplectic form on $T^* B$. 
%See Remark \ref{rmk:inexact} for a discussion of the other ones.

Let us recall how the action co-ordinates are constructed. 
We define $F_b := p^{-1}(b)$ to be the torus fibre over $b \in B$.  
For any path $\gamma: [0,1] \to B$, we have the \emph{flux homomorphism}
\[ \varphi_\gamma: H_1(F_{\gamma(0)};\Z) \to \R\]
given by integrating $\omega_{can}$ over the two-cycle `swept out' by a one-cycle in $F_{\gamma(0)}$ over $\gamma$. 
Keeping $b_0 = \gamma(0)$ fixed and allowing $b = \gamma_b(1)$ to vary, we obtain a function $f_a(b) = \varphi_{\gamma_b}(a)$ for each $a \in H_1(F_{b_0};\Z)$. 
We declare $\Aff_B \subset C^\infty_B$ to be the subsheaf of all such functions $f_a$: this defines the tropical affine structure on $B$.

We observe that there is an identification $H_1(F_b;\Z) \simeq T^*_{\Z,b}$ under which $a$ corresponds to $df_a$. 
The flux homomorphism $\varphi_\gamma$ corresponds, via this identification, to the integration map 
\[ \int_\gamma: T^*_{\Z,\gamma(0)} \to \R\]
(which extends the covector over $\gamma(0)$ to one over $\gamma$ by parallel transport, then integrates it).

Let us now suppose that $B$ is a tropical affine \emph{torus}, so that the fibration $p: X(B) \to B$ is trivial. 
This means we can identify $H_1(F_b;\Z) \simeq H_1(F;\Z)$ for all $b$ without ambiguity; so for any one-cycle $\gamma$ in $B$ we obtain a flux homomorphism
\[ \varphi_\gamma: H_1(F;\Z) \to \R.\]
This coincides, via the identification $H_1(F;\Z) \simeq H^0(T^*_\Z B)$, with the integration map \eqref{eqn:intmap}. 

%\begin{rmk} \label{rmk:inexact}
%Lagrangan torus fibrations $p: (X,\omega) \to B$ locally modeled on $(X(B),\omega_{can})$ can be glued together with fibrewise translations by closed one-forms modulo integral ones. 
%The sheaf of such is isomorphic, via the exterior derivative, to $C^\infty/\Aff$, so Lagrangian torus fibrations are classified up to symplectomorphism by a class in $\check{H}^1(B;C^\infty/\Aff) \simeq \check{H}^2(B;\Aff)$. 
%The diffeomorphism type of the torus fibration is captured by the image of this class under the map $d:\check{H}^1(B;C^\infty/\Aff) \to \check{H}^1(B;\Omega^1/T^*_\Z)$. 
%This can be identified with the map $d: \check{H}^2(B;\Aff) \to \check{H}^2(B;T^*_\Z B)$, whose kernel can be identified with $\check{H}^2(B;\R)/\check{H}^1(B;T^*_\Z B)$ by the long exact sequence associated to the short exact sequence
%\[ 0 \to \R \to \Aff \xrightarrow{d} T^*_\Z  \to 0.\]
%It follows that the set of symplectomorphism classes of Lagrangian torus fibrations with a fixed diffeomorphism type is a torsor for $H^2(B;\R)/H^1(B;T^*_\Z B)$: indeed we can change the symplectic form by
%\[ \omega \mapsto \omega + p^* \alpha\]
%where $\alpha$ is a closed real 2-form on $B$ representing a class in $H^2_{dR}(B;\R)/H^1(B;T^*_\Z B)$.
%\end{rmk}

\subsection{Lagrangian cobordism\label{Sec:lag_cobordism}} Let $(X,\omega)$ be a closed symplectic manifold equipped with an $\infty$-fold Maslov cover $\mathcal{L}$ of the oriented Lagrangian Grassmannian (in the sense of \cite{Seidel:graded}). 
A \emph{Lagrangian brane} is a Lagrangian submanifold $L \subset X$ equipped with a grading (i.e., a lift to $\mathcal{L}$). 
Fix tuples $\bL^- = (L_1^-,\ldots, L_r^-)$ and $\bL^+ = (L_1^+, \ldots, L_s^+)$ of Lagrangian branes; these need not be pairwise distinct either within or between the two collections.  A \emph{planar cobordism} between $\bL^-$ and $\bL^+$ is a Lagrangian brane
\[
V \subset (\C \times X, \omega_\C \,\oplus \omega)
\]
which has cylindrical ends: outside a compact subset, it co-incides with the union of  product Lagrangian branes
\[
\bigsqcup_{i=1}^r \, (-\infty, -1] \times \{x^-_i\} \times L_i^- \ \sqcup \ \bigsqcup_{j=1}^s \, [1,\infty) \times \{x^+_j\} \times L_j^+
\]
for pairwise distinct real numbers $\{x_i^-\}$ and pairwise distinct real numbers $\{x_j^+\}$. We will refer to the $\{x_i^{\pm}\}$ as the negative and positive heights of the cobordism. (By convention, positive heights are ordered by increasing $x_j^+$ whilst negative heights are ordered by decreasing $x_j^-$, to reflect the direction of the Reeb flow on the ideal circle boundary of $\C$.)  The projection of $V$ to $\C$ co-incides with a collection of parallel horizontal lines near each of positive and negative real infinity. We will call these the \emph{ends} of $V$, and will say the  Lagrangian brane $L_i^-$ or $L_j^+$ is carried by the corresponding end.  If a single brane $L\subset X$ is carried by $k$ distinct (positive resp. negative) ends, we say that $L$ has \emph{multiplicity} $k$ (at positive resp. negative infinity).

\begin{rmk} Planar Lagrangian cobordism was introduced by Arnol'd \cite{Arnold1, Arnold2} and has been studied by a number of authors: non-exhaustively, \cite{Audin} treats the underlying topological aspects; \cite{Biran-Cornea, Biran-Cornea-2} study Floer theoretic rigidity;  \cite{Haug} computes the cobordism group of $T^2$.  There is a related but distinct notion of ``Lagrangian cobordism between Legendrian submanifolds of a contact manifold", also  much studied -- see e.g. \cite{Chantraine, BST} and references therein -- which in general yields an asymmetric relation on Legendrians, and which is not our focus here.
\end{rmk}

Given the same initial data $(X, \bL^-, \bL^+)$, a \emph{cylindrical cobordism} between $\bL^-$ and $\bL^+$ is a Lagrangian brane
\[
V \subset (\C^* \times X, \omega_{\C^*} \, \oplus \omega)
\]
which should be asymptotically cylindrical, so its projection to $\C^*$ should  co-incide outside a compact set with a union of radial lines near both $0$ and $\infty$ in $\C^*$, carrying $\bL^-$ and $\bL^+$ respectively, cf. Figure \ref{Fig:cylindrical_cobordism}.   (We fix the co-ordinate $z=re^{i\theta}$ on $\C^*$, the symplectic form $\omega_{\C^*} = dr\wedge d\theta$, and the $\infty$-fold Maslov covering of $\C^*$ defined by the volume form $dz$; equivalently, we view $\C^*=X(\R)$ and follow the general conventions of Section \ref{Sec:AlbCob}. The brane structures for cylindrical cobordisms are then defined with respect to product data.)

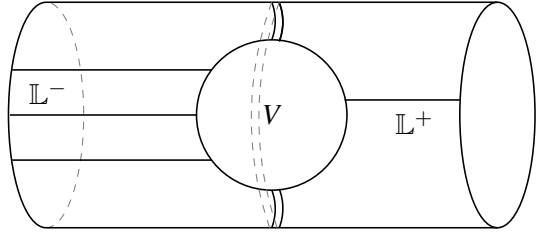
\begin{figure}[ht]
\begin{center} 
\begin{tikzpicture}
	\draw[dashed,color=gray] (0,0) arc (-90:90:0.5 and 1.5);% right half of the left ellipse
	\draw[semithick] (0,0) -- (6,0);% bottom line
	\draw[semithick] (0,3) -- (6,3);% top line
	\draw[semithick] (0,0) arc (270:90:0.5 and 1.5);% left half of the left ellipse
	\draw[semithick] (6,1.5) ellipse (0.5 and 1.5);% right ellipse
	\draw (0.03,1.8) node {$\mathbb{L}^-$};
	\draw (4.9,1.4) node {$\mathbb{L}^+$};
	\draw[semithick] (-0.45,2.1) -- (2.2,2.1);
	\draw[semithick] (-0.48,1.5) -- (2,1.5);
	\draw[semithick] (-0.45,0.9) -- (2.2,0.9);
	\draw[semithick] (3.97,1.7) -- (5.5,1.7);
	\draw[semithick] (3,1.5) circle(1cm) node{$V$};
	\draw[semithick]  (3,2.5) arc (-21:21:0.7) ;
	\draw[semithick]  (3.1,2.48) arc (-21:21:0.72) ;
	\draw[semithick]  (3.1,2.48) arc (-21:21:0.72) ;
	\draw[semithick]  (3.1,0) arc (-21:21:0.72);
	\draw[semithick]  (3,0) arc (-21:21:0.7) ;
	\draw[dashed,color=gray] (3.1,3) arc (160:200:4.5) ;

	\draw[dashed,color=gray] (3,3) arc (160:200:4.5) ;

\end{tikzpicture}
\end{center}
\caption{A cylindrical cobordism with three negative ends and one positive end.}
\label{Fig:cylindrical_cobordism}
\end{figure}

%\begin{rmk}
%Planar (respectively cylindrical) cobordism is naturally a relation on \emph{ordered} (resp. cyclically ordered) tuples of Lagrangian branes, but we will not keep track of this information.
%\end{rmk}

\begin{example}
For any Lagrangian brane $L \subset X$, we have $L[1] = -L$ in the (cylindrical or planar) Lagrangian cobordism group. 
This relation is realized by the cobordism $V = \gamma \times L$ where $\gamma$ is a path with two negative and no positive ends. 
As a consequence we have $L[2] = L$.
\end{example}

\begin{example}
For disjoint Lagrangian branes $L_1$ and $L_2$ in $X$, we have $L_1 \sqcup L_2 = L_1 + L_2$ in the (cylindrical or planar) Lagrangian cobordism group. 
This relation is realized by the cobordism $V = \gamma_1 \times L_1 \sqcup \gamma_2 \times L_2$, where $\gamma_i$ are paths that are parallel and distinct near negative real infinity, and coincide near positive real infinity.
\end{example}

Any planar cobordism gives rise to a cylindrical one by quotienting $\C$ by a large imaginary translation. Therefore we have a well-defined map
\[ \Cob(X/\C) \to \Cob(X/\C^*), \]
but not one in the reverse direction because $\Cob(X/\C^*)$ has additional relations from cobordisms which wrap the cylinder non-trivially.

Finally, it is sometimes enlightening to enhance our notion of Lagrangian brane to include a $U_\Lambda$-local system. 
All the definitions stay the same: in particular cobordisms should come equipped with local systems as part of their brane structure, which restrict to the local systems over the ends. 
We will denote the resulting version of cylindrical cobordism by $\Cob^{\loc}(X/\C^*)$. 
One reason this is a useful thing to do is that the points of the rigid analytic space $Y(B)$ mirror to a Lagrangian torus fibration $X(B)$ correspond precisely to fibres of $X(B)$ equipped with $U_\Lambda$-local systems.%\marginpar{\tiny{so there is a one-to-one correspondence between generators of $\Cob_{\fib}^{\loc}(X(B)/\C^*)$ and $\CH_0(Y(B))$'}}

\subsection{The Albanese map on cobordism}\label{Sec:AlbCob}

Let $B$ be an oriented tropical affine manifold. 
The orientation on $B$ determines a canonical $\infty$-fold Maslov cover $\mathcal{L}$ of the oriented Lagrangian Grassmannian of $X = X(B)$, which we now describe. 
For any $\omega$-compatible almost-complex structure $J$ on $X$, there is a canonical choice of $J$-holomorphic volume form $\eta_J \in \Omega^{n,0}(X)$: if $b_1,\ldots,b_n$ are oriented local affine coordinates on $B$, we define
\begin{equation}
\label{eqn:flatvolform}
\eta_J := (d^c b_1 + idb_1) \wedge \ldots \wedge (d^c b_n + idb_n).
\end{equation}
If $V \subset TX$ is an oriented Lagrangian subspace, we define $\theta_V \in S^1$ by $\eta_J|_V = e^{i\theta_V} \cdot vol$ where $vol$ is an oriented volume form on $V$. 
A grading for $V$ is a lift of $\theta_V$ to $\theta^\#_V \in \R$. 
The space of all graded oriented Lagrangian subspaces forms the desired Maslov cover $\mathcal{L}$.
Since the space of $\omega$-compatible almost-complex structures is contractible, it does not matter which $J$ we choose. 

If $i:L \to X$ is an oriented Lagrangian immersion, we obtain a \emph{phase function} $\theta_L: L \to S^1$ by $ i^* \eta_J = e^{i \theta_L} \cdot vol$, 
where $vol \in \Omega^n(L)$ is an oriented volume form. 
A grading for $L$ is a lift of $\theta_L$ to $\theta_L^\#:L \to \R$. 
A Lagrangian brane is equivalent to an oriented Lagrangian submanifold equipped with a grading.  
If the phase function $\theta_L \in \R/2\pi\Z$ is constant, we say $L$ is \emph{special Lagrangian of phase $\theta_L$} with respect to $J$. 

The orientation of $B$ endows a Lagrangian torus fibre $F_b$ with a natural orientation via the isomorphism $H_1(F_b;\Z) \simeq T^*_\Z B$. 
Examination of \eqref{eqn:flatvolform} shows that $F_b$, equipped with its natural orientation, is special Lagrangian of phase $0$ with respect to any $\eta_J$. 
Thus $F_b$ admits a natural brane structure: we will abuse notation by denoting this Lagrangian brane also by $F_b$.

\begin{defn}
We define $\Cob_{\fib}(X/\C^*)$ to be the free abelian group generated by Lagrangian torus fibres equipped with brane structures, modulo the equivalence relation generated by Lagrangian brane cobordisms, all of whose ends are fibres.
\end{defn}

Note that there is a map $\Cob_{\fib}(X/\C^*) \to \Cob(X/\C^*)$, which is surjective onto the subgroup generated by fibres, however it need not be an isomorphism onto this subgroup. 

We recall that $\Cob_{\fib}(X/\C^*)_{\hom} \subset \Cob_{\fib}(X/\C^*)$ denotes the homologically-trivial subgroup.

\begin{lem}
\label{lem:albwelldef}
There is a well-defined map 
\begin{align*}
\alb: \Cob_{\fib}(X/\C^*)_\hom & \to \Alb(B), \text{ sending} \\
[F_b] - [F_a] & \mapsto \alb(b-a).
\end{align*}
\end{lem}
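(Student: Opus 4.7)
The plan is to verify that any cylindrical Lagrangian cobordism $V$ with fibre ends yields a trivial Albanese relation, thereby showing that the formula $[F_b] - [F_a] \mapsto \alb(b-a)$ descends from the free abelian group on fibres to $\Cob_\fib(X/\C^*)_\hom$. Given $V \subset \C^*\times X$ with negative ends $\{F_{b_i^-}\}_{i=1}^r$ and positive ends $\{F_{b_j^+}\}_{j=1}^s$, the homological triviality hypothesis forces $r=s$ (since all torus fibres represent the same class $[F]\in H_n(X;\Z)$), so $\sum_j b_j^+ - \sum_i b_i^-$ is a well-defined element of $\tilde{C}_0(B)$. What needs to be proved is that $\alb\left(\sum_j b_j^+ - \sum_i b_i^-\right) = 0 \in \Alb(B)$.

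I would test against an arbitrary $\alpha \in H^0(T^*_\Z B)$. Using $T^*_\Z B \cong H_1(F_\bullet;\Z)$, $\alpha$ corresponds to a parallel section $a$ of the fibre-homology local system, giving a canonical $1$-cycle $a_i^\pm \subset F_{b_i^\pm}$ on each cylindrical end of $V$. The strategy mimics the toy case $B=S^1$: there, $V$ Lagrangian in $\C^*\times T^2$ gives $\int_V(\omega+\omega_{\C^*})=0$; by Stokes applied with the primitive $r\,d\theta_{\C^*} + b\,d\theta$, the contribution of each cylindrical end reduces to $b^\pm \in \R/\Z$ (the ray contributes nothing since $d\theta_{\C^*}=0$ along it), yielding $\sum_j b_j^+ - \sum_i b_i^- \equiv 0 \bmod \Z$, i.e.\ vanishing in $\Alb(S^1)=\R/\Z$. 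In general, the plan is to construct a $2$-chain $\Sigma \subset V$ with $\partial \Sigma = \sum_j a_j^+ - \sum_i a_i^-$ (extending $\text{ray}_i^\pm \times a_i^\pm$ on each end to a bounding chain in the interior), and to apply the Lagrangian identity $\int_\Sigma\omega = -\int_\Sigma\omega_{\C^*}$.

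By the flux-Albanese correspondence, $\int_\Sigma\omega$ computes $\sum_j \varphi_{\gamma_j^+}(a) - \sum_i \varphi_{\gamma_i^-}(a) = \alb\left(\sum_j b_j^+ - \sum_i b_i^-\right)(\alpha)$, where $\gamma_i^\pm$ are the paths in $B$ obtained by projecting paths in $V$ from a basepoint to each end. Simultaneously, the cylindrical end structure forces $\omega_{\C^*}$ to vanish on $\Sigma \cap \text{ends}$ (the ends project to constant-angle rays of zero symplectic area), so $\int_\Sigma\omega_{\C^*}$ is controlled by the compact part of $\Sigma$; a Stokes argument then places it in the period lattice $\Lambda_\alpha := \int_{H_1(B;\Z)}\alpha$, because any loop in $V$ projects under $V \to X \to B$ to a loop in $B$. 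Combining, the Albanese pairing against $\alpha$ lies in $\Lambda_\alpha$ and therefore vanishes in $\Alb(B)$.

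The main obstacle is the existence and control of the $2$-chain $\Sigma$ when $n\geq 2$: because $V$ is not a fibration over $B$, the parallel class $a$ does not extend canonically across $V$, and one must show $\sum_j[a_j^+] - \sum_i[a_i^-] = 0$ in $H_1(V;\Z)$ so that a bounding $\Sigma$ exists. This uses crucially that $a$ is a \emph{parallel} section (so the boundary cycles represent a coherent class under parallel transport) and that $V$ has \emph{fibre} ends (so the cycles $a_i^\pm$ are canonically placed, not arbitrary Lagrangian boundary loops). Verifying that the projection of $\Sigma$ to $\C^*$ has the required tameness — so that $\int_\Sigma\omega_{\C^*}$ indeed lands in $\Lambda_\alpha$ rather than being an arbitrary real number — is the heart of the argument, and is precisely where the Lagrangian hypothesis is indispensable: without it, even topologically existing cobordisms could realize arbitrary Albanese differences.
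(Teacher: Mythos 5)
Your strategy --- exploit $\int_\Sigma(\omega_{\C^*}\oplus\omega)=0$ for a $2$-chain $\Sigma$ in the Lagrangian $V$ --- is the right circle of ideas, and is close in spirit to the paper's argument, but two steps do not go through as written. The lesser problem is the existence of $\Sigma\subset V$ with $\partial\Sigma=\sum_j a_j^+-\sum_i a_i^-$: parallelism of $a$ and the fibre-end condition only give vanishing of this class in $H_1(\C^*\times X;\Z)$, not in $H_1(V;\Z)$, so an argument is genuinely needed. It can be supplied, e.g.\ by capping the relation $\sum_j[F_{b_j^+}]-\sum_i[F_{a_i^-}]=0\in H_n(V;\Z)$ (the total ideal boundary of $V$ is null-homologous) against a degree-$(n-1)$ class pulled back from $X$ restricting to the Poincar\'e dual of $a$ on each fibre --- which amounts to taking $\Sigma=V\pitchfork(\C^*\times[B]\times\alpha)$, essentially the chain $u_V$ appearing in the paper's proof.

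The serious gap is in the last two steps. The asserted equality $\int_\Sigma\omega=\sum_j\varphi_{\gamma_j^+}(a)-\sum_i\varphi_{\gamma_i^-}(a)$ is false in general: $\pi_X(\Sigma)$ is not a flux-swept chain, and differs from one by a closed $2$-cycle $c$ in $X$, so $\int_\Sigma\omega$ differs from the flux by $\langle[\omega],[c]\rangle$. (Note also that your proposed error term $\int_\Sigma\omega_{\C^*}$ is in fact exactly zero, since $\omega_{\C^*}=d(r\,d\theta)$ is exact and $\partial\Sigma$ projects to points of $\C^*$; so if your first equality were exact you would conclude that the flux vanishes identically as a real number, which fails for cobordisms wrapping the cylinder, cf.\ Corollary \ref{cor:fixed_sum}.) A priori $\langle[\omega],[c]\rangle$ is an arbitrary period of $\omega$ over $H_1(B;\Z)\otimes H_1(F;\Z)$, not an element of your $\Lambda_\alpha$. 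And even granting $\alb(\cdot)(\alpha)\in\Lambda_\alpha$ for every $\alpha$, the conclusion does not follow: vanishing in $\Alb(B)=\Hom(H^0(T^*_\Z B),\R)/H_1(B;\Z)$ requires a \emph{single} $\beta\in H_1(B;\Z)$ with $\alb(\cdot)=\varphi_\beta$ as functionals, not a cycle $\beta_\alpha$ depending on $\alpha$. The paper's proof manufactures exactly this uniform $\beta$: it closes $V$ up to a cycle $V+W$ with $W=\{z\}\times\gamma\times F$ an auxiliary chain over a single point of $\C^*$, defines $\beta$ by $\mathrm{pr}_{1,n}(\pi_X)_*(V+W)=\beta\times[F]$ \emph{before} any $\alpha$ is chosen, and then shows $\alb\left(\sum b_i-\sum a_i\right)(\alpha)=\varphi_\beta(\alpha)$ for every $\alpha$ by intersecting $V+W$ with $\C^*\times[B]\times\alpha$, which forces the correction cycle to have K\"unneth class $\beta\times\alpha$. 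You would need to add this uniform control to complete your argument.
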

\begin{proof}
Suppose $V \subset \C^*\times X $ is a Lagrangian brane cobordism with ends $\bL^+ = \{F_{b_i}, i=1,\ldots,k\}$ and $\bL^- = \{F_{a_i}, i=1,\ldots,k\}$.  
We must show that $\alb\left(\sum_i (b_i - a_i)\right) = 0$.

By altering the ends of $V$ we can arrange that they all terminate on a single fibre $\{z\}\times X$ (at the cost of making $V$ immersed rather than embedded). 
Let $\gamma$ be a one-chain in $B$ with $\partial \gamma = \sum_i (b_i -  a_i)$. 
Let $W = \{z\} \times \gamma \times F$ be the corresponding $(n+1)$-chain in $\C^*\times X$. 
Putting it together with $V$ we get a closed $(n+1)$-cycle $V + W$ in $\C^*\times X$. 

Let us denote by $\mathrm{pr}_{p,q}: H_{p+q}(X) \to H_p(B) \otimes H_q(F)$
the projection arising from the K\"unneth decomposition of the homology of $X \simeq B \times F$.
We define $\beta \in H_1(B)$ to be the class such that $\mathrm{pr}_{1,n} (\pi_X)_* (V + W) = \beta \otimes [F]$.
We will prove that
\[ \alb\left(\sum_i b_i - \sum_i a_i\right) = \varphi_\beta,\]
from which it follows that the image in $\Alb(B)$ vanishes.

To prove this, let $\alpha$ be a one-cycle representing a homology class in $H_1(F)$. 
Consider the two-cycle $u = (V + W) \pitchfork (\C^*\times [B] \times \alpha)$ (perturbing to make the intersection transverse). 
We decompose $u$ into chains $u_V \subset V$ and $u_W \subset W$ having a common boundary. 
Now observe that 
\begingroup
\allowdisplaybreaks
\begin{align*}
\alb\left(\sum_i b_i - \sum_i a_i\right)(\alpha) &= \varphi_\gamma(\alpha) \\
&= \int_{u_W} \omega_X \quad \text{(since $u_W$ is the two-cycle swept by $\alpha$ over $\gamma$)}\\
&= \int_{u_W} \omega_{\C^*} \oplus \omega_{X} \quad\text{(since $\pi_{\C^*}(W) = \{z\}$)} \\
&= \int_u \omega_{\C^*} \oplus \omega_{X}  \quad \text{(since $\omega_{\C^*} \oplus \omega_{X}|_V = 0$)} \\
&= \int_u \omega_X 
 \quad \text{(since $\omega_{\C^*}$ is exact and $u$ is closed)} \\
&=\int_{(\pi_{X})_* ((V + W) \pitchfork \C^*\times [B] \times \alpha)} \omega_X \quad \text{(by definition of $u$)} \\
&= \int_{(\pi_X)_* (V + W) \pitchfork [B] \times \alpha} \omega_X \\
&= \int_{\beta \times \alpha} \omega_X \\
&= \varphi_\beta(\alpha).
\end{align*}
\endgroup
The penultimate line follows because $\omega_X$ vanishes on $H_2(B) \otimes H_0(F)$ (because the zero-section $B$ is Lagrangian) and on $H_0(B) \otimes H_2(F)$ (because the fibre $F$ is Lagrangian), so the integral only picks up the component in $H_1(B) \otimes H_1(F)$. 
It is clear that if $\mathrm{pr}_{1,n}(\eta) = \beta \otimes [F]$, then $\mathrm{pr}_{1,1}(\eta \pitchfork [B] \times \alpha) = \beta \otimes \alpha$,
and the claim follows.
%This completes the proof that $\alb\left(\sum b_i - \sum a_i\right) = \varphi_\beta$, which shows that the map is well-defined.
\end{proof}

\begin{rmk}
There is an analogue of the Albanese map incorporating $U_\Lambda$-local systems, which fits into a commutative diagram
\[ \xymatrix{ \Cob_{\fib}^{\loc}(X/\C^*)_{\hom} \ar[r]^-{\alb^{\loc}} \ar[d] & Y(\Alb(B)) \ar[d] \\
\Cob_{\fib}(X/\C^*)_\hom \ar[r]^-{\alb} & \Alb(B),} \]
where the left vertical map forgets the local system and the right vertical map is a non-Archimedean SYZ fibration (see Section \ref{sec:nonArchSYZ}).
\end{rmk}

\section{Tropical curves}

\subsection{Chow group\label{Sec:Chow}}

If $B$ is a compact tropical affine manifold, we recall that a \emph{tropical $k$-cycle} $V \subset B$ is a weighted balanced rational polyhedral complex of pure dimension $k$ (see \cite[Section 4]{Mikhalkin2006},  \cite{Allermann2010,Mikhalkin2015}).\footnote{We will assume that our polyhedral complexes are locally, but not necessarily globally, finite.} 
The cycle is called \emph{effective} if all weights are positive; an effective tropical $k$-cycle is also called a $k$-dimensional \emph{tropical subvariety}. 

Two tropical $k$-cycles $V_+,V_- \subset B$ are called \emph{linearly equivalent} if there exists a tropical $(k+1)$-cycle $V \subset B \times \R$ which coincides with $V_\pm \times \R$ over some neighbourhood of $\pm \infty$ in the $\R$ factor. 
We will say that two $k$-dimensional tropical subvarieties are \emph{effectively linearly equivalent} if the same holds with a $(k+1)$-dimensional tropical subvariety $V$. 

\begin{defn}
\label{defn:tropchow}
We define the tropical Chow group $\CH_k(B)$ to be the free abelian group generated by $k$-dimensional tropical subvarieties in $B$ modulo effective linear equivalence.
\end{defn}

\begin{rmk}
\label{rmk:weirdchow}
The tropical Chow group is usually defined in the literature to be the free abelian group generated by tropical $k$-cycles modulo linear equivalence \cite{Mikhalkin2006,Allermann2010,Mikhalkin2015,Maclagan2007}. 
These two groups in general do not coincide (see Remark \ref{rmk:diffchow}); it is the version in Definition \ref{defn:tropchow} that is most relevant to our paper. 
\end{rmk}

We define the \emph{degree} of a tropical $0$-cycle to be $\deg\left(\sum_p n_p \cdot [p]\right) := \sum_p n_p$. 
It is straightforward to check that this gives a well-defined map $\deg:\CH_0(B) \to \Z$, as a consequence of the balancing condition.
We denote its kernel by $\CH_0(B)_\hom$.

\subsection{Parametrized tropical curves}\label{subsec:paramcurve}

We will be most interested in \emph{tropical curves in $B$}, which are one-dimensional tropical subvarieties $V \subset B$. 
A tropical curve is equivalent to a graph (possibly with semi-infinite edges) which is properly embedded in $B$, so that each edge $E$ is a tropical submanifold of $B$. 
Each edge $E$ is equipped with a weight $w_E \in \N$, and the balancing condition $ \sum_{E_i \to v} w_{E_i} \cdot u_i = 0$ 
is satisfied at each vertex $v$: here the sum is over all edge-ends incident to $v$, and $u_i$ is the generator of $T_\Z E_i$ at $v$ pointing `into' $E_i$. 

We now recall the notions of abstract and parametrized tropical curves (see \cite{Mikhalkin2005}). 

\begin{defn}
An \emph{abstract tropical curve} $C$ is a graph with semi-infinite edges, each vertex of which has finite valency, equipped with a tropical affine structure on each edge. 
The semi-infinite edges are required to have infinite `length'.
\end{defn}

Note that a tropical affine structure on a one-manifold determines a Riemannian metric $dx^2$, where $dx$ is a generator of $T^*_\Z$; this is the notion of `length' referred to in the definition.

Sometimes we will drop the `abstract', when we feel no confusion can result. 

\begin{defn}
A \emph{parametrized tropical curve} in $B$ is an abstract tropical curve $C$ equipped with a continuous map $h: C \to B$ whose restriction to each edge $E$ is tropical affine, satisfying the balancing condition $\sum_{E_i \to v} dh(z_i) = 0$
at each vertex $v$ of $C$, where $z_i$ is the generator of $T_\Z E_i$ at $v$ pointing `into' $E_i$, and the sum is over all edge-ends incident to $v$.
\end{defn}

The image of a parametrized tropical curve is a tropical curve in $B$: the weight assigned to $E$ is the unique positive integer $w_E$ such that $dh(z_E) = w_E \cdot u_E$, where $z_E$ is a generator of $T_\Z E$ and $u_E$ is a primitive vector in $T_\Z B$.
Conversely, any tropical curve in $B$ admits a canonical parametrization.

Associated to an abstract tropical curve $C$ we have a subsheaf $\Aff_C \subset C^0(C)$ of the sheaf of continuous functions, consisting of all functions $f$ whose restriction to each edge $E$ lies in $\Aff_E$ and which satisfy $\sum_{E_i \to v} df(z_i) = 0$ at each vertex. 
We observe that a continuous map $h:C \to B$ determines a parametrized tropical curve if and only if $h^*(\Aff_B) \subset \Aff_C$.
There is also a sheaf $T^*_\Z C$, which is the quotient of $\Aff_C$ by the constant functions.
%
%\subsection{Pushforward}
%\label{subsec:push}

The tropical Chow group also makes sense for an abstract tropical curve $C$. 
A tropical divisor in $C$ is simply a finite integer linear combination $\sum_p n_p \cdot p$ of points $p \in C$; and this tropical divisor is called \emph{principal} if there exists a continuous function $f$ on $C$, affine away from the points $p$ at which $n_p \neq 0$, and which `increases slope by $n_p$' as we cross a point $p$ (see \cite[Section 4]{Mikhalkin2008} for the precise definition). 
We define $\CH_0(C)$ as the quotient of the group of tropical divisors by the principal ones. 

If $\sum_p n_p \cdot p$ is a principal tropical divisor associated to a continuous function $f$ and $h: C \to B$ is a parametrized tropical curve, then the tropical 0-cycle $\sum_p n_p \cdot f(p)$ vanishes in $\CH_0(B)$. 
Indeed, one can use $f$ and $h$ to construct an effective tropical cycle in $B \times \R$ which realizes this relation. 
This construction is called the  the `tropical graph cobordism' in \cite[Construction 3.3]{Allermann2010}, and the `full graph' in \cite{Mikhalkin2006}. 
The upshot is that there is a pushforward map $h_*: \CH_0(C) \to \CH_0(B)$.

%
%we define an abstract tropical curve $C'$ by attaching semi-infinite edges $E_p \simeq \R_{\ge 0}$ to $C$ at each point $p$. 
%We then define a `graph' parametrized tropical curve $h': C' \to \R \times B $ by setting
%\begin{align*}
%h'(c) &= (f(c),h(c)) \text{ for $c \in C \subset C'$} \\
%h'(t) &= (f(p) - n_p \cdot t, h(p)) \text{ for $t \in E_p$}.
%\end{align*}
%The corresponding image tropical curve induces the desired linear equivalence (in fact, it is easy to see that every linear equivalence between tropical $0$-cycles in $B$ is induced by this construction for some parametrized tropical curve $h: C \to B$). 
%See \cite[Construction 3.3]{Allermann2010} for the `tropical graph cobordism' construction, as well as \cite{Mikhalkin2006} where it is called the `full graph'.

\subsection{Jacobian variety}

We now recall the definition of the Jacobian variety of an abstract tropical curve, as well as the tropical Abel--Jacobi map, following \cite[Section 6]{Mikhalkin2008}.

The integration pairing makes perfect sense on an abstract tropical curve $C$, and in fact is perfect (after tensoring with $\R$): thus we can define tropical affine tori $\Alb(C)$ and $\Pic^0(C)$ as above. 
Furthermore, since a parametrized tropical curve $h: C \to B$ determines maps $h^*: H^0(T^*_\Z B) \to H^0(T^*_\Z C)$ and $h_*: H_1(C;\Z) \to H_1(B;\Z)$, we also have induced maps $h_*$ and $h^*$ on the Albanese and Picard varieties as above; and we have the Albanese map $\alb: C \to \Alb(C)$, which is a parametrized tropical curve. 

\begin{lem}[= {\cite[Lemma 6.1]{Mikhalkin2008}}]
\label{lem:jacpol}
If $C$ is a compact tropical curve, then there is a natural identification $H_1(C;\Z) \simeq H^0(T^*_\Z C)$ (a tropical version of Serre duality) which makes the integration pairing symmetric and positive definite. 
\end{lem}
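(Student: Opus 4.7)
The plan is to produce the isomorphism $H^0(T^*_\Z C) \to H_1(C;\Z)$ directly, edge by edge, and then read off symmetry and positive-definiteness of the integration pairing from the explicit formula it takes in these coordinates.

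First I would fix an orientation of each edge $E$ of $C$, which canonically trivializes $T^*_\Z C|_E$ by the generator $dx_E$ pointing in the chosen direction, and which turns each edge into a 1-chain of the CW structure on $C$. A global section $\omega \in H^0(T^*_\Z C)$ then amounts to the data of an integer slope $n_E \in \Z$ for each edge, with $\omega|_E = n_E\, dx_E$. Because $T^*_\Z C$ is the quotient of $\Aff_C$ by the constants, the collection $(n_E)$ must locally extend to a tropical affine function near each vertex $v$, and this is precisely the balancing condition $\sum_{E_i \to v} \omega(z_i) = 0$ from the definition of $\Aff_C$. Using that $z_i$ points into $E_i$ from $v$, so that $dx_E(z_i) = +1$ if $v = t(E)$ and $dx_E(z_i) = -1$ if $v = h(E)$, the balancing becomes $\sum_{t(E)=v} n_E = \sum_{h(E)=v} n_E$ for every vertex $v$. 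This is exactly the condition that the 1-chain $\sum_E n_E \cdot E$ be a cycle, so the map $\omega \mapsto \sum_E n_E \cdot E$ lands in $Z_1(C;\Z) = H_1(C;\Z)$ (the last equality because $C$ is 1-dimensional) and is manifestly a bijection. The map is natural because flipping the chosen orientation of an edge negates both $dx_E$ and the 1-chain $E$, so the resulting identification does not depend on any choices.

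Next I would compute the integration pairing in these coordinates. For $\gamma = \sum_E m_E \cdot E \in H_1(C;\Z)$ and $\omega = \sum_E n_E\, dx_E \in H^0(T^*_\Z C)$, integrating edge by edge gives
\[
\int_\gamma \omega \;=\; \sum_E m_E\, n_E\, \ell_E,
\]
where $\ell_E > 0$ is the tropical affine length of $E$ (the integral of $dx_E$ along $E$, which is well-defined because $C$ is compact and edges are finite). Under the identification of the previous step, this becomes a symmetric bilinear form $(m,n) \mapsto \sum_E m_E n_E \ell_E$ on $H_1(C;\Z)$. Symmetry is obvious from the formula, and the form is visibly positive definite on the ambient chain group $C_1(C;\R)$ since $\ell_E > 0$, so its restriction to the subgroup of cycles $H_1(C;\R)$ is positive definite as well.

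The main obstacle is essentially bookkeeping: one has to check carefully that the sign convention implicit in the sheaf $\Aff_C$ (vectors $z_i$ pointing into edges at a vertex) really does match the sign convention in the simplicial boundary operator (head minus tail), so that "balanced slope assignment" and "1-cycle" are literally the same object and not off by a global sign. Once the bijection is set up consistently, the remaining content of the lemma is a single line of positive-definite linear algebra.
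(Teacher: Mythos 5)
Your proposal is correct and follows essentially the same route as the paper: both identify $H^0(T^*_\Z C)$ and $H_1(C;\Z)$ with the subgroup of $\Z^{\mathrm{Edges}(C)}$ cut out by the balancing (equivalently, vanishing-boundary) condition after orienting the edges, and both observe that the integration pairing is diagonal with positive entries $\ell_E$ on the ambient group, hence restricts to a symmetric positive-definite form. Your extra care with the sign conventions and with independence of the edge orientations is a fine elaboration of the same argument.
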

\begin{proof}
Observe that, after choosing an orientation of the edges of $C$, both groups can be identified with
\[ \left\{ \vec{a} \in \Z^{\mathrm{Edges}(C)}: \sum_{E \to v} a_E = 0\right\}.\]
In the case of $H^0(T^*_\Z C)$, the basis element $\mathsf{e}_E$ corresponds to the generator of $T^*_\Z E$ with the appropriate orientation, and the constraint corresponds to the tropical balancing condition; in the case of $H_1(C;\Z)$, the basis element corresponds to the edge $E$ with the appropriate orientation, and the constraint corresponds to the condition that the cycle have vanishing boundary. 
The integration pairing extends to the larger group $\Z^{\mathrm{Edges}(C)}$, where its matrix is diagonal with entries equal to the lengths of the edges. 
In particular it is symmetric and positive-definite, and therefore restricts to such a pairing on the subgroup.
\end{proof}

The first consequence of Lemma \ref{lem:jacpol} is that the Albanese and Picard varieties of $C$ are canonically isomorphic. 

\begin{defn}
If $C$ is a compact tropical curve, we define its \emph{Jacobian} $\Jac(C) := \Alb(C) \simeq \Pic^0(C)$, and the \emph{Abel--Jacobi map} $\AJ: C \to \Jac(C)$ to correspond to the Albanese map.
\end{defn}

The Abel--Jacobi map extends to an isomorphism
\begin{equation}
\label{eqn:tropAJch}
 \AJ: \CH_0(C)_\hom \xrightarrow{\sim} \Jac(C)
\end{equation}
by the tropical Abel--Jacobi theorem \cite[Theorem 6.2]{Mikhalkin2008}. 

\begin{lem}
If $B$ is a tropical affine torus, then the tropical Albanese map extends to a map
\begin{equation}
\label{eqn:tropAlbch}
\alb: \CH_0(B)_\hom \to \Alb(B).
\end{equation}
If $h: C \to B$ is a parametrized tropical curve, then the diagram
\[ \xymatrix{ \CH_0(C)_\hom \ar[r]^-{\AJ} \ar[d]^-{h_*} & \Jac(C) \ar[d]^-{h_*} \\
\CH_0(B)_\hom \ar[r]^-{\alb} & \Alb(B)}\]
commutes.
\end{lem}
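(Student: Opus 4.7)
The plan is to deduce both assertions from a single change-of-variables identity at the level of degree-zero divisors on $C$, before passing to linear equivalence.

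First I would establish the identity $\alb(h_*(D)) = h_*(\AJ(D))$ in $\Alb(B)$ for every $D \in \tilde{C}_0(C)$. By $\Z$-bilinearity it suffices to treat $D = [q]-[p]$, in which case both sides are computed from a one-chain $\gamma$ in $C$ with $\partial\gamma = D$: unwinding definitions, $\AJ(D) \in \Alb(C)$ is the functional $\alpha \mapsto \int_\gamma \alpha$ on $H^0(T^*_\Z C)$, and the induced map $h_*: \Alb(C) \to \Alb(B)$ (dual to $h^*: H^0(T^*_\Z B) \to H^0(T^*_\Z C)$) sends it to $\alpha \mapsto \int_\gamma h^*\alpha$; on the other hand $h \circ \gamma$ is a one-chain in $B$ with boundary $h_*(D) = h(q) - h(p)$, so $\alb(h_*(D))(\alpha) = \int_{h \circ \gamma}\alpha$. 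The two values agree by the tropical change-of-variables formula $\int_\gamma h^*\alpha = \int_{h\circ\gamma}\alpha$, which reduces edge-by-edge to the elementary statement that a tropical-affine map $h|_E$ of weight $w_E$ scales $h^*\alpha|_E$ and the length of $h(E)$ by exactly compensating factors.

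Next I would deduce well-definedness of \eqref{eqn:tropAlbch} from this identity combined with the tropical Abel--Jacobi theorem \eqref{eqn:tropAJch}. By the construction recalled in Section \ref{subsec:push}, and the converse noted there, every effective linear equivalence between degree-zero cycles on $B$ arises as $h_*(D)$ for some compact parametrized tropical curve $h: C \to B$ and some principal divisor $D$ on $C$. For such $D$ the Abel--Jacobi class $\AJ(D) \in \Jac(C)$ vanishes by definition of principal, so the identity of the first step yields $\alb(h_*(D)) = h_*(\AJ(D)) = 0$. Hence the map of Lemma \ref{lem:albwelldef}, extended $\Z$-linearly to $\tilde{C}_0(B)$, descends to $\CH_0(B)_{\hom}$; and commutativity of the diagram is then precisely the identity just proved.

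The only real subtlety is the change-of-variables identity of the first step, which requires checking that both sides of the integration pairing behave compatibly under pullback and pushforward on abstract tropical curves. The key point is that the weight $w_E$ appearing in the definition of the parametrized tropical curve is exactly the integer which intertwines the two sides, and the balancing condition for $h$ at each vertex is precisely what ensures $h^*\alpha$ defines an element of $H^0(T^*_\Z C)$ (satisfying the balancing constraint used in Lemma \ref{lem:jacpol} to make the integration pairing against $H_1(C;\Z)$ well-defined). Everything else is formal.
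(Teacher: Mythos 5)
Your proposal is correct and follows essentially the same route as the paper: well-definedness is deduced from the facts that every effective linear equivalence in $B$ is the pushforward of a principal divisor along a parametrized tropical curve and that \eqref{eqn:tropAJch} is well-defined, with commutativity of the diagram coming from the naturality of the integration pairing under $h^*$/$h_*$. The only difference is that you spell out the change-of-variables identity $\alb(h_*(D)) = h_*(\AJ(D))$ explicitly, which the paper leaves implicit.
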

\begin{proof}
The main non-trivial step is to check that the map \eqref{eqn:tropAlbch} is well-defined. 
This follows from the fact that any linear equivalence in $B$ is induced by a parametrized tropical curve $h: C \to B$ via pushforward, together with the fact that the map \eqref{eqn:tropAJch} is well-defined. 
\end{proof}

\subsection{Polarizations}

We continue our review of the tropical Jacobian variety from \cite{Mikhalkin2008}.
Lemma \ref{lem:jacpol} implies that $\Jac(C)$ admits a canonical \emph{principal polarization} in the following sense:

\begin{defn}
\label{defn:pol}
A \emph{polarization} of a tropical affine torus is a map
\[ c: H_1(B;\Z) \to H^0(T^*_\Z B)\]
such that $\int_{\gamma_1} c(\gamma_2)$ defines a symmetric, positive-definite pairing on $H_1(B;\Z)$.\footnote{
We remark that Mikhalkin--Zharkov interpret this map as a class in
\[ \Hom(H_1(B;\Z),H^0(T^*_\Z B)) \simeq H^1(T^*_\Z B) \]
(observing that $T^*_\Z B$ is trivial on a tropical affine torus), and show that the resulting pairing on $H_1(B;\Z)$ is symmetric if and only if this class lies in the image of the tropical Chern character map $H^1(\Aff_B) \to H^1(T^*_\Z B)$.} 
The \emph{index} of the polarization is the index of the map; if the index is $1$ the polarization is called \emph{principal}.
\end{defn}

Let us denote the integration pairing by 
\begin{align*}
\ints:H_1(B;\Z) & \to H^0(T_\R B), \\
\int_\gamma \alpha &= \alpha(\ints(\gamma)),\qquad \text{
for any $\alpha \in H^0(T^*_\R B)$.}
\end{align*}
Recall that a Riemannian metric on $B$ induces an isomorphism $\flat: TB \xrightarrow{\sim} T^*B$. 

\begin{lem}\label{lem:whenmpol}
A polarization of the tropical affine torus $B$ is equivalent to a flat Riemannian metric on $B$, such that the map
\[ H_1(B;\Z) \xrightarrow{\ints} H^0(T_\R B) \xrightarrow{\flat} H^0(T^*_\R B)\]
lands in $H^0(T^*_\Z B)$.

If $B=B(M)$, then this is in turn equivalent to a symmetric positive-definite matrix $g$ such that $g \cdot M$ has integer entries.
\end{lem}
\begin{proof}
The polarization $c$ is related to the flat Riemannian metric by $c(\gamma) = \ints(\gamma)^\flat$. 

We can identify $\ints:H_1(B(M);\Z) \to H^0(T_\R B(M))$ with the map $\Z^n \to \R^n: x \mapsto M\cdot x$; then the matrix $g$ is the matrix of $\flat:\R^n \to (\R^n)^*$, in the standard basis.
\end{proof}

%\begin{lem}\label{lem:Riem_met}
%A polarization is equivalent to a flat Riemannian metric $g$ on $B$, such that the map
%\[ H_1(B;\Z) \xrightarrow{\ints} H^0(T_\R B) \xrightarrow{\flat} H^0(T^*_\R B)\]
%lands in $H^0(T^*_\Z B)$ (we recall that a Riemannian metric induces an isomorphism $\flat: TB \xrightarrow{\sim} T^*B$, whose inverse is denoted by $\#$).
%\end{lem}
%\begin{proof}
%The polarization $c$ corresponding to the Riemannian metric $g$ is defined by $c(\gamma) = \ints(\gamma)^\flat$, which is equivalent to
%\[ g(\ints(\gamma_1),\ints(\gamma_2)) = \int_{\gamma_1} c(\gamma_2).\]
%The equivalence of the conditions on $c$ and $g$ is easily checked.  
%\end{proof}
%
%\begin{cor}\label{cor:whenmpol}
%A polarization of the tropical affine torus $B(M)$ is equivalent to a symmetric positive-definite matrix $g$ such that $g \cdot M$ has integer entries.
%\end{cor}
%\begin{proof}
%We can identify $\ints:H_1(B(M);\Z) \to H^0(T_\R B(M))$ with the map $\Z^n \to \R^n: x \mapsto M\cdot x$. 
%Then the matrix $g$ is the matrix of $\flat:\R^n \to (\R^n)^*$, in the standard basis.
%\end{proof}

\begin{cor}
\label{cor:indpol}
If $f: B_1 \to B_2$ is a tropical affine immersion, then a polarization on $B_2$ induces one on $B_1$.
\end{cor}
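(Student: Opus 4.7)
The plan is to use the reinterpretation of a polarization as a flat Riemannian metric $g$ on $B$ satisfying an integrality condition, and simply pull back the metric along $f$. Let $g_2$ denote the flat Riemannian metric on $B_2$ corresponding to the given polarization, and set $g_1 := f^* g_2$. Since $f$ is a tropical affine map, in local affine coordinates its differential $df$ is given by a constant integer matrix, so $g_1$ is flat (its components are constant in any affine chart). Since $f$ is an immersion, $df$ is fibrewise injective, so $g_1$ is positive-definite.

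The substantive point is to verify the integrality condition, i.e.\ that for every $\gamma \in H_1(B_1;\Z)$, the covector $g_1^\flat(\ints_1(\gamma)) \in H^0(T^*_\R B_1)$ lies in $H^0(T^*_\Z B_1)$. First I would observe that the integration pairings on $B_1$ and $B_2$ are compatible with $f$: for $\alpha \in H^0(T^*_\Z B_2)$, the identity $\int_{f_* \gamma} \alpha = \int_\gamma f^*\alpha$ yields
\[ \ints_2(f_*\gamma) = df(\ints_1(\gamma)). \]
Now pick any $w \in T_\Z B_1$; using the definition of $g_1$ and the previous identity,
\[ g_1(\ints_1(\gamma),w) \;=\; g_2\bigl(df(\ints_1(\gamma)),\, df(w)\bigr) \;=\; g_2\bigl(\ints_2(f_*\gamma),\, df(w)\bigr). \]
By the integrality hypothesis on $g_2$, the covector $g_2^\flat(\ints_2(f_*\gamma))$ lies in $H^0(T^*_\Z B_2)$; and since $f$ is tropical affine, $df(w) \in T_\Z B_2$. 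Hence the pairing above is an integer, which is precisely what is needed to conclude that $g_1^\flat(\ints_1(\gamma)) \in H^0(T^*_\Z B_1)$. The only step that requires care is the verification of this integrality, but it follows cleanly from the naturality of $\ints$ under tropical affine maps and the fact that $df$ preserves integral lattices.
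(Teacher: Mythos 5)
Your proof is correct and takes the same approach as the paper, which simply declares the metric on $B_1$ to be the pullback of the one on $B_2$; you have filled in the details (flatness, positive-definiteness via the immersion hypothesis, and the integrality check via naturality of $\ints$ under tropical affine maps) that the paper leaves implicit.
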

\begin{proof}
The Riemannian metric on $B_1$ is the restriction of the one on $B_2$.
\end{proof}

\subsection{Non-existence results for tropical curves}

\begin{defn}
We say that a tropical affine torus is \emph{simple} if it admits no non-trivial sub-tropical affine torus. 
\end{defn}

\begin{lem}\label{lem:simpledense}
The set
\[ \left\{ M \in \GL(n,\R): B(M)\text{ is simple}\right\}\]
is the complement of a countable union of proper linear subspaces of the vector space $\mathrm{Mat}_{n \times n}(\R)$, intersected with the open set $\GL(n,\R) \subset \mathrm{Mat}_{n \times n}(\R)$.
\end{lem}
\begin{proof}
It is clear that $B(M)$ is non-simple if and only if there is a non-trivial rational subspace $V \subset \R^n$ such that $M\cdot V$ is also rational. (A subspace $V \subset \R^n$ is called rational if it has a basis of vectors in $\Q^n$.) 
Equivalently, for some $0<i<n$ there exist linearly independent vectors $v_1,\ldots,v_i \in \Q^n$ and linearly independent vectors $w_1,\ldots,w_{n-i} \in \Q^n$ such that $M$ lies in the proper rational subspace of $\mathrm{Mat}_{n \times n}(\R)$ defined by $w_j^T \cdot M \cdot v_k = 0$ for all $j,k$. 
There are clearly a countable set of choices of such $\{v_j\},\{w_k\}$, so the proof is complete.
\end{proof}

\begin{lem}
If $n \ge 2$, the set 
\[\left\{ M \in \GL(n,\R): B(M) \text{ admits a polarization}\right\}\]
is a countable union of open subsets of proper linear subspaces of the vector space $\mathrm{Mat}_{n \times n}(\R)$, intersected with the open subset $\GL(n,\R)$.
\end{lem}
\begin{proof}
Let
\begin{align*}
 SPD_n &:= \{g \in \mathrm{Mat}_{n \times n}(\R): \text{ $g$ symmetric positive definite}\}\\
 NZD_n &:= \{A \in \mathrm{Mat}_{n \times n}(\Z): \det(A) \neq 0\}.
\end{align*}
By Lemma \ref{lem:whenmpol} (and using the fact that a matrix is symmetric positive definite if and only if its inverse is so), $M$ admits a polarization if and only if it lies in the image of the map
\begin{equation}
\label{eqn:spdnzd}
SPD_n \times NZD_n \to \GL(n,\R): (g,A) \mapsto g\cdot A.
\end{equation}
This map is bilinear in the entries of $SPD_n$ and $NZD_n$. 
Note that $SPD_n$ is an open subset of the set of symmetric matrices, which is a linear subspace of dimension $n(n+1)/2 < n^2$; so for fixed $A$, the image of the map is an open subset of a proper linear subspace. 
We now observe that $NZD_n$ is countable, concluding the proof.
\end{proof}

\begin{cor}
If $n \ge 2$, the set
\begin{equation}
\label{eq:simpnonpol}
 \{M \in \GL(n,\R): \text{ $B(M)$ is simple and does not admit a polarization}\}
 \end{equation}
is dense in $\GL(n,\R)$.
\end{cor}

\begin{lem}
\label{lem:nocurve}
If $B$ is a tropical affine torus such that $\Pic^0(B)$ is simple and does not admit a polarization (in particular, if $B=B(M)$ where $M^T$ is an element of the set \eqref{eq:simpnonpol}), then $B$ admits no non-trivial tropical curves. 
\end{lem}
\begin{proof}
Let $h: C \to B$ be a tropical curve. 
Then we have a map
\[h^*: \Pic^0(B) \to \Jac(C).\]
Since $\Pic^0(B)$ is simple, $h^*$ must either be the zero map, or an immersion. 

In the first case, it would follow that $h^*: H^1(B;\R) \to H^1(C;\R)$ vanishes, so $C$ lifts to the universal cover of $B$. 
It is easy to check that the balancing condition forces a compact tropical curve in $\R^n$ to be constant.

In the second case, the polarization on $\Jac(C)$ would induce one on $\Pic^0(B)$ by Corollary \ref{cor:indpol}, contradicting the hypothesis.
\end{proof}

\begin{cor}\label{cor:Chow_free}
Under the same conditions, there are no non-trivial effective linear equivalences between tropical $0$-cycles: so $\CH_0(B) \simeq \Z^{B}$. 
\end{cor}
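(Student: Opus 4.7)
The plan is to deduce this directly from Lemma \ref{lem:nocurve} by projecting any witness of an effective linear equivalence onto $B(M)$ and collapsing its vertical part.

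First, I would fix an effective linear equivalence between tropical $0$-cycles $Z_\pm$, realised by a $1$-dimensional tropical subvariety $V\subset B(M)\times\R$ coinciding with $Z_\pm\times\R$ near $\pm\infty$, and take its canonical parametrization $h=(h_B,h_\R)\colon C\to B(M)\times\R$. Projecting the balancing condition in $T(B(M)\times\R)$ onto $TB(M)$ shows that $h_B\colon C\to B(M)$ is itself a parametrized tropical map, though it may contract some ``vertical'' edges (those on which $dh_B$ vanishes).

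Next I would form a reduced abstract tropical curve $C'$ by contracting every edge of $C$ on which $h_B$ is constant, giving an induced parametrized tropical map $\bar h_B\colon C'\to B(M)$ that contracts no edge (balancing at the identified vertices follows from balancing in $C$, since contracted edges contribute $dh_B=0$). The main technical step, and the one I expect to be the crux, is verifying that $C'$ is \emph{compact}: the asymptotic description of $V$ implies that every semi-infinite edge of $C$ is vertical and hence collapsed, while the rest of $V$ lies in the compact region $B(M)\times[-N,N]$ and is a finite polyhedral complex by local finiteness. The image of $\bar h_B$ is then a compact tropical curve in $B(M)$, which by Lemma \ref{lem:nocurve} must be a single point; since $\bar h_B$ contracts no edges of $C'$, this forces $C'$ to have no edges at all, i.e., every edge of $C$ is vertical.

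Finally, I would conclude with a short balancing argument: a purely vertical $V$ decomposes into connected components each supported in a single fibre $\{p\}\times\R$, where balancing at every interior vertex forces the total weight crossing each generic level $\{t\}$ to be constant along $\R$. Comparing $t\to\pm\infty$ recovers the multiplicity of $p$ in $Z_-$ and in $Z_+$ respectively, so these multiplicities agree for every $p$ and hence $Z_-=Z_+$ as $0$-cycles. Since every effective linear equivalence is therefore trivial, $\CH_0(B(M))\cong\Z^{B(M)}$. Beyond the compactness of $C'$---which ultimately reduces to compactness of $B(M)$ and the prescribed asymptotics of $V$---the remaining bookkeeping is routine.
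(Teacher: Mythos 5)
Your proof is correct and follows essentially the same route as the paper: project the tropical curve realising the equivalence to $B(M)$, observe that balancing is preserved so Lemma \ref{lem:nocurve} forces the image to be a point, and conclude the curve is vertical and hence induces a trivial relation. You spell out the contraction of vertical edges, the compactness of the resulting curve, and the final weight-matching at $\pm\infty$ more explicitly than the paper does, but these are exactly the details the paper's two-line argument is implicitly relying on.
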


\begin{proof} An effective linear equivalence, by definition, arises from a tropical curve in $\R\times B$. The projection of any connected component of this tropical curve to $B$ is still balanced, so must be a point. The tropical curve $\R\times\{pt\} \subset \R\times B$  induces a trivial relation on $0$-cycles.  \end{proof}

%\begin{rmk}
%Lemma \ref{lem:nocurve} gives a sufficient criterion for a tropical affine torus not to admit non-constant tropical curves, which however is not necessary. 
%Characterizing tropical affine tori with this property is related to the problem of characterizing the tropical affine tori that are Jacobians of abstract tropical curves, which is known as the tropical Schottky problem (posed in \cite{Mikhalkin2008}; see also \cite{Chua2017}). 
%\end{rmk}

We conclude with a criterion for $B(M)$ to satisfy the conditions of Lemma \ref{lem:nocurve}.
Observe that if $M\in \GL(n,\R)$ is symmetric, it has $n(n+1)/2$ independent entries; if these entries are rationally independent we say that $M$ is an \emph{irrational symmetric} matrix.

\begin{lem}
\label{lem:irratsimple}
If $M$ is irrational symmetric and invertible, then $\Pic^0(B(M))$ is simple.
\end{lem}
\begin{proof}
Recall that $\Pic^0(B(M)) \simeq B(M^T)$. 
If $B(M^T)$ were not simple, we would have $w^T \cdot M \cdot v = 0$ for some non-zero vectors $w,v \in \Q^n$ (see the proof of Lemma \ref{lem:simpledense}). 
This would imply $\sum_{ij} w_i M_{ji} v_j = 0$, which implies $w_iv_j+w_jv_i=0$ for all $i,j$ because $M$ is irrational symmetric. 
Since $v \neq 0$, let us suppose $v_i \neq 0$; then $w_iv_i=0$ so $w_i=0$; then $w_iv_j+w_jv_i=0$ implies $w_j=0$ for all $j\neq i$; and we have derived a contradiction since $w \neq 0$.
\end{proof}

\begin{lem}\label{lem:irratpol}
If $M$ is irrational symmetric, invertible and indefinite, then $\Pic^0(B(M))$ does not admit a polarization.
\end{lem}
\begin{proof}
Suppose that $\Pic^0(B(M)) \simeq B(M^T)=B(M)$ admits a polarization. 
Lemma \ref{lem:whenmpol} implies that  $M=g \cdot A$ for some positive-definite symmetric matrix $g$ and some matrix with integer entries  and non-zero determinant $A$. 
In particular, $g=M A^{-1}$ must be symmetric, so $M A^{-1} = (A^{-1})^TM$.
Since $M$ is irrational symmetric whereas the entries of $A^{-1}$ are rational, it follows that 
\begin{align*}
\mathsf{e}_{ii} \cdot A^{-1} &= (A^{-1})^T \cdot \mathsf{e}_{ii} \text{ and} \\
(\mathsf{e}_{ij} + \mathsf{e}_{ji}) \cdot A^{-1} &= (A^{-1})^T \cdot  (\mathsf{e}_{ij} + \mathsf{e}_{ji})
\end{align*}
where $\mathsf{e}_{ij}$ denote the elementary matrices. 
It is simple to conclude from the first equation that $A^{-1}$ is diagonal, and then from the second that the diagonal entries coincide; so $A$ is a multiple of the identity. 
But then since $M$ is indefinite, $g=M\cdot  A^{-1}$ is too, a contradiction.
\end{proof}

\subsection{Mixed-sign tropical curves}

Now we introduce the rather unfamiliar notion of a parametrization of a tropical $1$-cycle, by analogy with the notion of a parametrization of a tropical curve. 
Later we will use this notion to construct formal Lagrangian cobordisms, which our non-existence result shows can not be realized as tautologically unobstructed Lagrangian cobordisms.
%The idea may look somewhat bizarre at first sight. 
%Basically, later on we will be interested in tropical affine tori which do not admit tropical curves, since in that case we can prove a corresponding result about non-existence of Lagrangian cobordisms. 
%But we don't want the tropical curves to not exist for too mundane a reason, because then the non-existence of Lagrangian cobordisms will also hold for a mundane reason (having to do with smooth topology rather than symplectic topology). 
%The construction in this section will be used later to show that our main non-existence results are not mundane in this way.

\begin{defn}
A \emph{mixed-sign abstract tropical curve} is an abstract tropical curve $C$ equipped with an assignment $\epsilon: \mathrm{Edges}(C) \to \{\pm 1\}$ of signs to each edge. 
\end{defn}

A \emph{mixed-sign parametrized tropical curve} in a tropical affine manifold $B$ is a mixed-sign tropical curve $(C,\epsilon)$ together with a map $h: C \to B$ satisfying all of the conditions of a parametrized tropical curve, except that the balancing condition gets twisted by the signs $\epsilon$:
\[ \sum_{E_i \to v} \epsilon(E_i) \cdot dh(z_i) = 0.\]
The image of a mixed-sign parametrized tropical curve is a tropical $1$-cycle, and any tropical $1$-cycle admits a canonical parametrization.

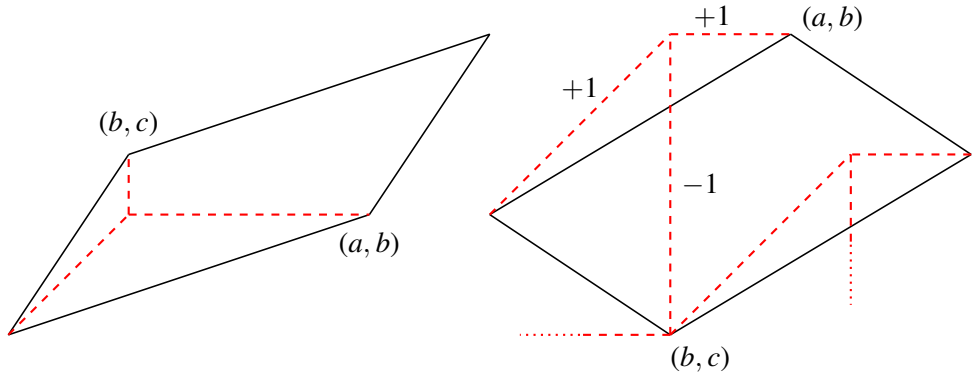
\begin{figure}[ht]
\begin{center} 
\begin{tikzpicture}[scale=0.8]

\draw[semithick] (0,0) -- (2,3);
\draw[semithick] (0,0) -- (6,2);
\draw[semithick] (2,3) -- (8,5);
\draw[semithick] (6,2) -- (8,5);
\draw (6,1.5) node {$(a,b)$};
\draw (2,3.5) node {$(b,c)$};
\draw[thick,red,dashed] (0,0) -- (2,2);
\draw[thick,red,dashed] (2,2) -- (6,2);
\draw[thick,red,dashed] (2,2) -- (2,3);

\draw[semithick] (8,2) -- (11,0);
\draw[semithick] (8,2) -- (13,5);
\draw[semithick] (13,5) -- (16,3);
\draw[semithick] (11,0) -- (16,3);
\draw (13.7, 5.25) node {$(a,b)$};
\draw (11.5,-0.4) node {$(b,c)$};
\draw[thick,red,dashed] (11,0) -- (11,5);
\draw[thick,red,dashed] (11,5) -- (13,5);
\draw[thick,red,dashed] (8,2) -- (11,5);
\draw (11.5,2.5) node {$-1$};
\draw (11.7,5.3) node {$+1$};
\draw (9.5,4.1) node {$+1$};
\draw[thick,red,dashed] (16,3) -- (14,3);
\draw[thick,red,dashed] (14,3) -- (14,1.5);
\draw[thick,red,dotted] (14,1.5)--(14,0.5);
\draw[thick,red,dashed] (11,0) -- (14,3);
\draw[thick,red,dashed] (11,0) -- (9.5,0);
\draw[thick,red,dotted] (9.5,0)--(8.5,0);

\end{tikzpicture}
\end{center}
\caption{A tropical curve (left) and mixed-sign tropical curve (right).}
\label{fig:mixed_sign}
\end{figure}

The \emph{mixed-sign integration pairing} is a pairing
\[ \int^\epsilon: H^0(T^*_\Z C) \otimes H_1(C;\Z) \to \R\]
which is the same as the usual one, except one multiplies the component of the integral over edge $E$ by the sign $\epsilon(E)$. 
This allows us to define the \emph{mixed-sign Jacobian} $\Jac(C,\epsilon)$, which is a tropical affine torus so long as the mixed-sign integration pairing is nondegenerate.
The \emph{mixed-sign Abel--Jacobi map} is a mixed-sign parametrized tropical curve $\AJ:(C,\epsilon) \to \Jac(C,\epsilon)$.

\begin{lem}
\label{lem:mscurvenopol}
There exist tropical affine tori which are simple and do not admit a polarization, but which admit mixed-sign parametrized tropical curves.
\end{lem}
\begin{proof}
We can construct examples as $\Jac(C,\epsilon)$ where $(C,\epsilon)$ is a mixed-sign tropical curve. 
These certainly admit mixed-sign parametrized tropical curves via the mixed-sign Abel--Jacobi map; so it suffices by Lemmas \ref{lem:irratsimple} and \ref{lem:irratpol} to arrange that the mixed-sign integration pairing is irrational symmetric, nondegenerate and indefinite (where we implicitly identify $H_1(B;\Z) \simeq H^0(T^*_\Z B)$ by Serre duality). 
This is easy to do in examples: e.g., we can take $C$ to have two vertices $v_1,v_2$ and three edges $E_1,E_2,E_3$ connecting $v_1$ to $v_2$, and $\epsilon(E_i)$ negative for $i=2$ only. 
If we orient these edges to all point from $v_1$ to $v_2$ then the proof of Lemma \ref{lem:jacpol} gives an identification
\[ H_1(C;\Z) \simeq H^0(T^*_\Z C) \simeq \left\{(a_1,a_2,a_3) \in \Z^3: \sum_i a_i = 0\right\},\]
and the integration pairing is identified with the restriction of the bilinear form $\mathrm{diag}(\ell_1,-\ell_2,\ell_3)$ to this subspace, where $\ell_i>0$ is the length of $E_i$.
The restricted bilinear form is irrational symmetric if and only if the $\ell_i$ are rationally independent, and it has signature $(+,-)$ if and only if $\ell_2 > (\ell_1^{-1} + \ell_3^{-1})^{-1}$; it is clearly possible to satisfy these conditions simultaneously, cf.  Figure \ref{fig:mixed_sign}.
\end{proof}

\begin{rmk}
\label{rmk:diffchow}
Lemma \ref{lem:mscurvenopol} shows the difference between the equivalence relations of linear equivalence and effective linear equivalence (compare Remark \ref{rmk:weirdchow}). 
Namely, for a tropical affine torus as in Lemma \ref{lem:mscurvenopol},  effective linear equivalence is a trivial equivalence relation on $0$-cycles, but linear equivalence is not.
\end{rmk}

\subsection{Theta-divisors}

\begin{defn}
A \emph{tropical divisor} in a tropical affine manifold $B$ is a codimension-$1$ tropical subvariety.
\end{defn}

We observe that a tropical divisor is equivalent to a choice of section $f \in H^0(C^0_B/\Aff_B)$ which looks locally like $\max_i{f_i(b)}$ for some $f_i \in \Aff_B$. 
Given such a section $f$, the corresponding tropical divisor $\Trop(f)$ is  defined to be the `bend locus' of $f$, equipped with (positive) weights corresponding to the `amount of bending' of $f$ at the codimension-1 cells. 

Now let $B$ be a tropical affine torus equipped with a polarization $c: H_1(B;\Z) \to H^0(T^*_\Z B)$. 
In the following we will write $T^*_\Z B$ for $H^0(T^*_\Z B)$, and similarly for $T^*_\R B$, $T_\Z B$, $T_\R B$. 
 
Let $k \in \N$, and let $\delta: T^*_\Z B \to \R$ be a function satisfying $ \delta(\alpha + k\cdot c(\gamma)) = \delta(\alpha)$
for all $\gamma \in H_1(B;\Z)$. 
We define a corresponding piecewise-linear function 
\begin{align*}
f_{k,\delta}: T_\R B & \to \R\\
f_{k,\delta}(v) &:= \max_{\alpha \in T^*_\Z B} \left\{\alpha(v) - \frac{\left|\alpha^\#\right|^2}{2k} + \delta(\alpha)\right\},
\end{align*}
where the length of $\alpha^\#$ is calculated with respect to the Riemannian metric induced by the polarization. 
We have a corresponding tropical divisor $\Trop(f_{k,\delta})$ in $T_\R B$. 

We now show that this divisor descends to $B \simeq T_\R B/\im(\ints)$.
To do this we observe that the function $f_{k,\delta}$ has the following `quasi-periodicity' property:

\begin{lem}
If $\gamma \in H_1(B;\Z)$, then $f_{k,\delta}(v +  \ints(\gamma)) - f_{k,\delta}(v)$ is affine.
\end{lem}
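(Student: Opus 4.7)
The strategy is to reparametrize the maximization by the substitution $\alpha = \beta + k\cdot c(\gamma)$, which is a bijection of $T^*_\Z B$ with itself, and to show that its effect is to shift the objective by a function affine in $v$.

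First I would use the quasi-periodicity hypothesis $\delta(\alpha + k\cdot c(\gamma)) = \delta(\alpha)$ to replace $\delta(\alpha)$ by $\delta(\beta)$ throughout, so the $\delta$-term becomes invisible. The bulk of the calculation then consists in tracking the two cross terms produced by the substitution. Expanding the linear term $(\beta + k c(\gamma))(v + \ints(\gamma))$ produces, among other things, $+\beta(\ints(\gamma))$, while expanding the quadratic term $|(\beta + k c(\gamma))^\#|^2$ produces a cross term $2k\langle \beta^\#, c(\gamma)^\#\rangle$. The crucial geometric input is the identity $c(\gamma)^\# = \ints(\gamma)$, which is exactly the content of the preceding lemma identifying polarizations with flat Riemannian metrics; combined with $(\beta^\#)^\flat = \beta$ this rewrites the cross term as $2k\beta(\ints(\gamma))$. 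After dividing by $-2k$, this contribution cancels the $+\beta(\ints(\gamma))$ from the linear expansion.

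What remains inside the max is $\beta(v) - |\beta^\#|^2/(2k) + \delta(\beta)$, which is precisely the integrand defining $f_{k,\delta}(v)$, together with a $\beta$-independent remainder of the shape $k\, c(\gamma)(v) + \tfrac{k}{2}\, c(\gamma)(\ints(\gamma))$ that can be pulled out of the maximization. Subtracting $f_{k,\delta}(v)$ isolates this remainder, which is linear in $v$ plus a constant independent of $v$, hence affine; this is the desired conclusion.

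There is no analytic or combinatorial obstacle. The only point requiring care is the bookkeeping of the two dualities in play: the metric identity $\langle X^\#, Y \rangle = X(Y)$ and the polarization-as-metric identity $c(\gamma)^\# = \ints(\gamma)$. Given these, the lemma is a direct algebraic consequence of the translation invariance of $\delta$ and of the fact that the map $\gamma \mapsto c(\gamma)$ is computed by the metric.
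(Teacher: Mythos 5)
Your argument is correct and is essentially the paper's proof: both hinge on the substitution $\alpha \mapsto \alpha - k\cdot c(\gamma)$ being a $\delta$-preserving bijection of $T^*_\Z B$ together with the identity $c(\gamma)^\# = \ints(\gamma)$, and both arrive at the same affine remainder $k\cdot c(\gamma)(v) + \tfrac{k}{2}\left|\ints(\gamma)\right|^2$. The only cosmetic difference is that you substitute first and then expand, whereas the paper completes the square before reindexing the maximum.
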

\begin{proof}
We have
\begin{align*}
f_{k,\delta}(v+\ints(\gamma)) &= \max_\alpha \left\{\alpha(v+\ints(\gamma)) - \frac{\left|\alpha^\#\right|^2}{2k}+ \delta(\alpha)\right\} \\
&= \max_\alpha \left\{\alpha(v) + \left \langle \alpha^\#, \ints(\gamma)\right\rangle - \frac{\left|\alpha^\#\right|^2}{2k}+ \delta(\alpha)\right\} \\
&= \max_\alpha \left\{\alpha(v) - \frac{\left|\alpha^\#-k \cdot\ints( \gamma)\right|^2}{2k} +  \frac{k \cdot \left|\ints(\gamma)\right|^2}{2} + \delta(\alpha)\right\} \\
&= \max_\alpha \left\{\left(\alpha-k \cdot c(\gamma)\right)(v) - \frac{\left|(\alpha-k \cdot c(\gamma))^\#\right|^2}{2k}+ \delta(\alpha)\right\} +k \cdot c(\gamma)(v) +  \frac{k \cdot \left|\ints(\gamma)\right|^2}{2}\\
&= f_{k,\delta}(v) +k \cdot c(\gamma)(v) + \frac{k\cdot \left|\ints(\gamma)\right|^2}{2},
\end{align*}
where the final line follows from the fact that the map $\alpha \mapsto \alpha - k \cdot c(\gamma)$ is a bijection on $T^*_\Z B$ which preserves the function $\delta$.
\end{proof}

Since translating by $\ints(\gamma)$ adds an affine function to $f_{k,\delta}$, it preserves the tropical divisor $\Trop(f_{k,\delta})$, which therefore descends to $B$. 
Such a tropical divisor in $B$ is called a \emph{tropical $\Theta$-divisor} \cite{Mikhalkin2008}.

Now, given a tropical divisor $\Trop(f)$ and a point $b \in B$, there is a corresponding lattice polytope $\mathrm{in}_b(f) \subset T^*_{\R} B$, which is defined to be the convex hull of the differentials $df_i$ for all $i$ such that $f_i(b)$ is maximal. 
We observe that up to translation, the polytope depends only on the tropical divisor. 
We recall that a lattice simplex is called \emph{regular} if it contains no lattice points other than its vertices. 
A tropical divisor corresponding to a section $f$ will be called \emph{regular} if $\mathrm{in}_b(f)$ is a regular simplex for all $b \in B$.
 
\begin{lem}
\label{lem:regdiv}
Let $B$ be a polarized tropical affine torus. 
Then it admits a regular tropical divisor with at least one 0-cell.
\end{lem}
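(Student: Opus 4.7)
The plan is to show that for $k$ sufficiently large and a suitable periodic $\delta$, the tropical divisor $\Trop(f_{k,\delta})$ is dual to a unimodular lattice triangulation of a fundamental domain, from which both regularity and the existence of a $0$-cell will follow.

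First I will recast the problem in the language of polyhedral subdivisions via the standard tropical duality. The piecewise-linear function $f_{k,\delta}$ partitions $T_\R B$ into domains of linearity $R_\alpha := \{v : \alpha \text{ achieves the max in } f_{k,\delta}(v)\}$, and $\Trop(f_{k,\delta})$ is the union of their boundaries. This decomposition is dual to the ``corner subdivision'' $\mathcal{S}_{k,\delta}$ of $T^*_\R B$ obtained as the projection of the lower faces of the convex hull of the graph $\{(\alpha, |\alpha^\#|^2/(2k) - \delta(\alpha)) : \alpha \in T^*_\Z B\} \subset T^*_\R B \times \R$. The initial polytope $\mathrm{in}_b(f_{k,\delta})$ at $b$ is precisely the cell of $\mathcal{S}_{k,\delta}$ dual to the stratum of the divisor containing $b$. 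The quasi-periodicity shown in the preceding lemma implies that $\mathcal{S}_{k,\delta}$ is invariant under translation by the sublattice $k \cdot c(H_1(B;\Z))$, so it descends to a finite polyhedral decomposition of the quotient torus. Accordingly, $\Trop(f_{k,\delta})$ is regular in the sense required precisely when every maximal cell of $\mathcal{S}_{k,\delta}$ is a regular (unimodular) lattice $n$-simplex, in which case the $0$-cells of the divisor --- dual to the maximal simplices --- are automatically nonempty.

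It therefore suffices to produce $k$ and $\delta$ so that $\mathcal{S}_{k,\delta}$ is a unimodular triangulation of the quotient torus. By the Knudsen--Mumford--Waterman theorem, any lattice polytope admits a unimodular triangulation after a sufficiently large dilation; applying this to a fundamental parallelepiped for $c(H_1(B;\Z))$ and arranging the triangulation to respect the boundary identifications, for some $k$ we obtain a unimodular triangulation $\mathcal{T}$ of the quotient torus. Any such lattice triangulation can in turn be realised as a coherent (regular) triangulation: there exists a height function $h : T^*_\Z B \to \R$ whose corner subdivision is $\mathcal{T}$. We finally set $\delta(\alpha) := |\alpha^\#|^2/(2k) - h(\alpha)$ and, exploiting the freedom to modify $h$ by an affine function in $\alpha$ --- which leaves $\mathcal{T}$ unchanged --- arrange that the resulting $\delta$ is genuinely periodic modulo $k \cdot c(H_1(B;\Z))$.

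The principal obstacle is the final reconciliation between the required periodicity of $\delta$ and the quasi-periodicity of the quadratic $|\alpha^\#|^2/(2k)$. Concretely, the obstruction to choosing a periodic $\delta$ is an affine-function-valued cocycle on the lattice $c(H_1(B;\Z))$, and one must verify it is a coboundary; this is where equivariance of $\mathcal{T}$ under the action of $k \cdot c(H_1(B;\Z))$ is essential. Alternatively, one can proceed constructively: fix $h$ on one fundamental domain to realize $\mathcal{T}$ there, extend to all of $T^*_\Z B$ using the prescribed quasi-periodicity formula, and verify by a local convexity argument that the corner subdivision of the globally defined height function is still $\mathcal{T}$.
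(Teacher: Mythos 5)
Your reduction to the dual picture is correct and matches the paper's: the paper phrases it via the Legendre transform of $f_{k,\delta}$, whose bend locus is exactly your ``corner subdivision'', and the cells of that subdivision are the polytopes $\mathrm{in}_b(f_{k,\delta})$. (One small point: the paper's ``regular simplex'' means \emph{empty} --- no lattice points besides the vertices --- which is weaker than unimodular; you are aiming for more than is needed.)

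The genuine gap is the step you yourself flag as ``the principal obstacle'' and then do not close. You want to (i) produce a unimodular triangulation $\mathcal{T}$ of $T^*_\R B$ that is periodic under $k\cdot c(H_1(B;\Z))$, and (ii) realise $\mathcal{T}$ as the corner subdivision of a height function $h$ with the specific quasi-periodicity $h(\alpha + k\,c(\gamma)) = h(\alpha) + \langle \alpha^\#, \ints(\gamma)\rangle + \tfrac{k}{2}\lvert\ints(\gamma)\rvert^2$ forced by periodicity of $\delta$. Neither step is established. For (i), Knudsen--Mumford--Waterman gives a unimodular triangulation of a dilated fundamental parallelepiped but with no control over the induced boundary triangulations, so ``arranging the triangulation to respect the boundary identifications'' is not something the theorem provides. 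For (ii), coherence of a \emph{periodic} triangulation with respect to a \emph{quasi-periodic} height function is a nontrivial global convexity condition: fixing $h$ on one fundamental domain and extending by the quasi-periodicity formula will in general destroy convexity across the domain walls, and the cocycle you mention is not shown to be a coboundary. Since this is precisely the point of the lemma, the proof as written does not go through.

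The paper avoids prescribing a target triangulation altogether. It starts from $\delta = 0$, where the quadratic height $\lvert\alpha^\#\rvert^2/(2k)$ is automatically quasi-periodic of the right type and, by strict convexity, makes \emph{every} lattice point a vertex of the dual subdivision. It then observes that the unperturbed subdivision is independent of $k$ and has finitely many cells up to translation, so for $k$ large each cell injects into the quotient $T^*_\R B/\mathrm{im}(k\cdot c)$; a \emph{periodic} generic small perturbation of $\delta$ can therefore perturb the heights at the vertices of each cell independently, subdividing each cell into simplices which are automatically empty because all lattice points were already $0$-cells. If you want to salvage your route, you would need a lattice-periodic \emph{coherent} unimodular triangulation compatible with the quasi-periodic height (e.g.\ built from a Freudenthal-type triangulation adapted to the polarization), but verifying coherence there is essentially the same difficulty the paper's perturbation argument is designed to sidestep.
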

\begin{proof}
We will show that there exist $k,\delta$ such that the tropical $\Theta$-divisor $\Trop(f_{k,\delta})$ is regular and has at least one 0-cell. 
Indeed, it is a simple matter to check that \emph{any} tropical $\Theta$-divisor contains a $0$-cell (a cell of minimal dimension must be a sub-tropical affine torus along which $f_{k,\delta}$ is linear; however $f_{k,\delta}$ has quadratic growth), so the real task is to find $k$ and $\delta$ such that $\Trop(f_{k,\delta}$) is regular.

It will be helpful to consider the Legendre transform of $f_{k,\delta}$ \cite[Proposition 2.1]{Mikhalkin2004}, which is the largest convex function $g_{k,\delta}:T^*_\R B \to \R$ for which 
\[g_{k,\delta}(\alpha) \le \frac{\left|\alpha^\#\right|^2}{2k} - \delta(\alpha)\]
for all $\alpha \in T^*_\Z B$. 
This function is piecewise-linear, and its bend locus defines a polyhedral decomposition of $T^*_\R B$ whose facets are precisely the sets $\mathrm{in}_b(f_{k,\delta})$. 
We need to show that $k$ and $\delta$ can be chosen so that the cells of this polyhedral decomposition are regular simplices.
 
Let us start by setting $k=1, \delta = 0$. 
We observe that, by the strict convexity of the function $|\alpha^\#|^2$, each lattice point in $T^*_\Z B \subset T^*_\R B$ is then a 0-cell in the polyhedral decomposition. 
We next observe that any sufficiently small perturbation of $\delta$ results in a subdivision of the polyhedral decomposition. 
In fact, for any cell in the polyhedral decomposition, a generic small perturbation of the values of $\delta$ on its vertices would result in a subdivision of the cell into simplices (compare \cite{OP}). 
Since our starting decomposition included all lattice points as $0$-cells, the same is true of the subdivision; it follows that each simplex in the subdivision would in fact be regular.

However, it may not be possible to perturb the values of $\delta$ at these points independently while respecting the periodicity condition on $\delta$.
To resolve this, we must increase $k$. 
We observe that the polyhedral decomposition of $T^*_\R B$ induced by $g_{k,0}$ is independent of $k$, and invariant under translation by $c(\gamma)$ for $\gamma \in H_1(B;\Z)$. 
It is an elementary matter to check that each cell is compact, and there are finitely many up to translation; so by increasing $k$ if necessary we can ensure that the star of each vertex projects injectively to $T^*_\R B/\im(k \cdot c)$. 
Once this is the case, we can independently perturb the coefficients $\delta$ attached to the vertices of each cell in our polyhedral decomposition, and a generic small perturbation will result in a subdivision into regular simplices as required.
\end{proof}

\subsection{Existence results for tropical curves}

The aim of this section is to prove the following tropical analogue of \cite[Lemma 1.3]{Bloch1976}:

\begin{prop}
\label{prop:chowdiv}
Let $B$ be a polarized tropical affine torus. 
Then $\CH_0(B)_\hom$ is divisible.
\end{prop}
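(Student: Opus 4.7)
The plan is to mimic the classical proof of \cite[Lemma 1.3]{Bloch1976} in the tropical setting. Since $\CH_0(B)_\hom$ is generated by differences $[b_1]-[b_2]$ for $b_1,b_2 \in B$, it suffices to show each such element is divisible by every positive integer $N$. The key geometric input is the existence of a parametrized tropical curve $h: C \to B$ whose image contains both $b_1$ and $b_2$, so that divisibility can be transferred from the Jacobian $\Jac(C)$ (a real torus, hence divisible as an abelian group) to $\CH_0(B)_\hom$ via the pushforward map.

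First I would construct the required curve using the regular tropical $\Theta$-divisor produced by Lemma \ref{lem:regdiv}. Its translates form an $n$-dimensional family (parametrized by $\Pic^0(B)$); imposing the condition that a translate pass through $b_i$ cuts this family by one codimension, so imposing passage through both $b_1$ and $b_2$ still leaves an $(n-2)$-dimensional subfamily of admissible translates when $n \geq 2$. I would pick $n-1$ generic translates $\Theta_1,\ldots,\Theta_{n-1}$ from this subfamily, and form their stable tropical intersection. The regularity hypothesis on $\Theta$ (namely that each $\mathrm{in}_b(f)$ is a regular lattice simplex) is exactly what forces the stable intersection to be an effective, balanced tropical $1$-cycle with the expected multiplicities at its vertices; call the resulting tropical curve $V \subset B$, and note that $b_1,b_2 \in V$ by construction. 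A further genericity argument (possibly passing to a connected component after translating one of the $\Theta_i$ within the admissible subfamily) should ensure that $b_1$ and $b_2$ lie on a common connected component.

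Once such a $V$ is in hand, I would equip it with its canonical parametrization $h: C \to B$, where $C$ is a compact abstract tropical curve (taking the normalization if necessary), and pick preimages $\tilde b_1, \tilde b_2 \in C$ with $h(\tilde b_i)=b_i$. The tropical Abel--Jacobi theorem \eqref{eqn:tropAJch} identifies $\CH_0(C)_\hom \cong \Jac(C)$. Since $\Jac(C)$ is a real torus, it is a divisible abelian group, so for any $N \geq 1$ there exists $\xi \in \CH_0(C)_\hom$ with $N\cdot \xi = [\tilde b_1] - [\tilde b_2]$. Applying the pushforward $h_*:\CH_0(C) \to \CH_0(B)$ from Section \ref{subsec:push} then gives $N\cdot h_*\xi = [b_1]-[b_2]$ in $\CH_0(B)_\hom$, establishing the required divisibility.

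The main obstacle is the tropical-intersection-theoretic construction of $V$: one must verify that a generic choice of constrained translates of $\Theta$ really yields an effective tropical $1$-subvariety (as opposed to a cycle of mixed dimension or one failing the balancing condition) that contains both prescribed points on a single connected component. The regularity of $\Theta$ provided by Lemma \ref{lem:regdiv} is what makes a clean dimension count and transversality statement possible, but the combinatorial verification that stable intersection with generic constrained translates preserves effectiveness and connectivity through $b_1,b_2$ is the substantive content of the argument. Everything downstream (Abel--Jacobi, divisibility of real tori, pushforward) is formal once the curve exists.
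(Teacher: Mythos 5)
Your overall architecture is exactly the paper's (and Bloch's): find a compact tropical curve containing both points, use the tropical Abel--Jacobi theorem to see that the difference of the two points is divisible in $\CH_0(C)_\hom \cong \Jac(C)$, and push forward. The ingredients you name -- the regular $\Theta$-divisor of Lemma \ref{lem:regdiv}, complete intersections of $n-1$ translates, the pushforward of Section \ref{subsec:push} -- are the ones the paper uses. The difference, and the problem, is in how you produce the curve.

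You constrain \emph{all} $n-1$ translates to pass through both $b_1$ and $b_2$ and hope that genericity within the resulting $(n-2)$-dimensional subfamily fixes everything. Two things go wrong. First, the constraints consume your genericity: for $n=2$ the admissible subfamily is $0$-dimensional, so there is nothing generic to choose, and for $n\ge 3$ the intersection is forced to be non-transverse-looking at $b_1$ and $b_2$ unless you can show that the admissible subfamily contains translates whose facets through $b_i$ have $n-1$ linearly independent normals -- a statement about the normal fan of $\Theta$ that you would have to prove. Second, and more seriously, you need $b_1$ and $b_2$ to lie on a \emph{single connected component} of $C$ (otherwise $[\tilde b_1]-[\tilde b_2]$ has nonzero multidegree and is not divisible in $\CH_0(C)$, since tropical linear equivalence preserves the degree on each component). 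Connectivity of a complete intersection of tropical hypersurfaces through two prescribed points is not a routine genericity statement, and your suggested fix (translating one $\Theta_i$) destroys the very incidence conditions you imposed. The paper sidesteps both issues: Lemma \ref{lem:somecurvesexist} only ever places $b_0$ and $b$ on edges emanating from a neighbourhood of a single $0$-cell of the complete intersection (so they are automatically on the same component, and transversality is arranged by an unconstrained generic choice of the $v_i$), which yields the statement only for $b$ in an \emph{open set}; this is then upgraded to a dense set via the isogenies $\pi_k$ (Lemma \ref{lem:curvesexist}) and to arbitrary pairs by writing $[b_2-b_1]=[b_2-b_3]+[b_3-b_1]$ with $b_3$ in the intersection of two translates of the dense set. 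You should either adopt that detour or supply a genuine tropical Bertini-connectedness argument for your constrained complete intersections; as written, the construction of $V$ is the gap.

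One smaller remark: regularity of $\Theta$ is not what makes the stable intersection effective and balanced (that is automatic for stable intersections of effective cycles); in the paper it is used to make the set-theoretic intersection transverse and the resulting curve \emph{trivalent}, which is what the stronger Proposition \ref{prop:chowdivtri} and the later Lagrangian lifting require.
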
 

In fact, for technical reasons we will need a slight strengthening of this result. 
We define two $0$-dimensional tropical subvarieties to be \emph{trivalently linearly equivalent} if they are linearly equivalent, and the tropical curve $V$ realizing the linear equivalence has only trivalent vertices. 
We define $\CH^\tri_0(B)$ to be the quotient of the free abelian group generated by $0$-dimensional tropical subvarieties by effective trivalent linear equivalence. 
Thus there is a surjective map $\CH_0^\tri(B) \twoheadrightarrow \CH_0(B)$ which need not be an isomorphism. 

\begin{prop}
\label{prop:chowdivtri}
Let $B$ be a polarized tropical affine torus. 
Then $\CH_0^\tri(B)_\hom$ is divisible.
\end{prop}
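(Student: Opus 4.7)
The plan is to adapt the classical Bloch-type argument from algebraic geometry (Proposition \ref{prop:mirror_statements}(1), \cite{Bloch1976}) to the tropical setting, tracking trivalence throughout. First I would observe that $\CH_0^\tri(B)_\hom$ is generated by differences $[b_1] - [b_2]$ of two points, so it suffices to show any such difference is divisible by each $n \in \Z_{>0}$ via an effective trivalent linear equivalence. Given $b_1, b_2 \in B$ and $n$, I would produce an embedded trivalent parametrized tropical curve $h: C \hookrightarrow B$ passing through $b_1$ and $b_2$, together with a principal divisor on $C$ of the form $[b_1] - [b_2] - n \cdot D'$; pushing this forward via the full graph construction of Section \ref{subsec:push} would then exhibit the desired divisibility.

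The hard part is the existence of an embedded trivalent tropical curve $h: C \hookrightarrow B$ through two arbitrary prescribed points $b_1, b_2 \in B$, lying in the interior of edges. Here the polarization hypothesis is essential: by Lemma \ref{lem:nocurve}, without a polarization $B$ may contain no non-trivial tropical curves at all. With the polarization available, I would construct such curves from the regular tropical $\Theta$-divisors of Lemma \ref{lem:regdiv}. In dimension two the $\Theta$-divisor is already a one-dimensional tropical subvariety, and the regular-simplex condition on the dual polyhedral decomposition forces the divisor to be trivalent; in higher dimensions one would intersect $\dim(B) - 1$ translates of $\Theta$ to cut down to dimension one, again using regularity to ensure trivalent vertices at the combinatorial level. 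Translating the resulting trivalent curve and then perturbing slightly, one arranges that it is embedded, passes through both $b_1$ and $b_2$, and that both points lie in the interior of edges away from vertices.

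Given such $h: C \hookrightarrow B$, the tropical Abel--Jacobi isomorphism \eqref{eqn:tropAJch} identifies $\CH_0(C)_\hom$ with $\Jac(C)$, which is a real torus and hence divisible as an abstract abelian group; so one can choose $[D'] \in \CH_0(C)_\hom$ with $n \cdot [D'] = [b_1] - [b_2]$ in $\CH_0(C)_\hom$. Perturbing $D'$ within its linear equivalence class, I would arrange that its support consists of distinct interior points of edges of $C$, disjoint from $b_1$, $b_2$, and from the vertices of $C$. The principal divisor $[b_1] - [b_2] - n \cdot D'$ on $C$ then gives rise, via the full graph construction of Section \ref{subsec:push}, to a parametrized tropical curve in $\R \times B$ realizing the equivalence $[b_1] - [b_2] = n \cdot h_*([D'])$ in $\CH_0(B)_\hom$. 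Finally I would verify trivalence of this realizing curve: its vertices come either from vertices of $C$ (trivalent by construction), or from points where semi-infinite legs attach, where two edge-directions of $C$ meet one leg, again trivalent. Since $h$ is an embedding and the leg-attachment points are generic, all such vertices map injectively into $\R \times B$, so the resulting tropical curve is trivalent, giving an effective trivalent linear equivalence as required.
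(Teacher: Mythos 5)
Your overall strategy is the paper's: build trivalent tropical curves in $B$ from transverse intersections of translates of the regular $\Theta$-divisors of Lemma \ref{lem:regdiv}, use the tropical Abel--Jacobi isomorphism \eqref{eqn:tropAJch} on the curve to divide, push forward via the full graph construction, and preserve trivalency by arranging that all attachment points (the marked points and the support of the dividing divisor) are interior to edges. That last bookkeeping step matches the paper exactly.

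The gap is in the step you yourself flag as "the hard part": the existence of a trivalent tropical curve through \emph{two arbitrary prescribed} points $b_1,b_2$. "Translating the resulting trivalent curve and then perturbing slightly" does not deliver this. A tropical curve cannot be perturbed freely: its edges must keep integral slopes and its vertices must stay balanced, so the available deformations of $C(v)=\bigcap_i(D+v_i)$ form a finite-dimensional family (global translations plus variations of the offsets $v_i$). Holding one marked point fixed at $b_0$, the paper's Lemma \ref{lem:somecurvesexist} shows only that the second marked point sweeps out a non-empty \emph{open} set $V$, and Lemma \ref{lem:curvesexist} upgrades this to an open \emph{dense} set $U$ by pulling back under the isogenies $\pi_k$ --- but not to all of $B$. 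For a given pair $(b_1,b_2)$ one may well have $b_2\notin U$, so your construction can fail. The paper circumvents this with an intermediate point: choose $b_3\in (U+b_1-b_0)\cap(U+b_2-b_0)$ (non-empty since $U$ is open and dense) and divide $[b_3-b_1]$ and $[b_2-b_3]$ separately. Your argument goes through once you insert this reduction (and the isogeny/density step that makes it possible); without it, the key existence claim is unsupported.
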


The proof will use two Lemmas:

\begin{lem}
\label{lem:somecurvesexist}
Let $B$ be a polarized tropical affine torus, and $b_0 \in B$. 
Then there is a non-empty open set $V \subset B$ such that, for all $b \in V$, the points $b_0$ and $b$ lie on edges of a trivalent tropical curve $C$ in $B$. 
\end{lem}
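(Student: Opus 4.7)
The plan is to construct trivalent tropical curves through $b_0$ and $b$ as intersections of $n-1$ translates of the regular tropical $\Theta$-divisor from Lemma~\ref{lem:regdiv}, paralleling the classical algebraic argument for the existence of curves through pairs of points on an abelian variety.

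First I would invoke Lemma~\ref{lem:regdiv} to fix a regular tropical divisor $D \subset B$, writing $D_a := D + a$ for translates. For any pair $b_0, b \in B$, set
\[ \Lambda_{b_0, b} := \{a \in B : b_0 \in D_a \text{ and } b \in D_a\} = (b_0 - D) \cap (b - D), \]
which for generic $b$ close to (but distinct from) $b_0$ has dimension $n-2$. I would then choose $n-1$ translations $a_1, \ldots, a_{n-1} \in \Lambda_{b_0, b}$ in sufficiently general position that the tangent hyperplanes of $D_{a_i}$ at $b_0$ (and at $b$) are transverse; this is possible because the edges of $D$ at the $0$-cell guaranteed by Lemma~\ref{lem:regdiv} span $T_\R B$, making enough distinct top-cell directions available. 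The expected intersection
\[ C := D_{a_1} \cap \cdots \cap D_{a_{n-1}} \]
is then a $1$-dimensional tropical cycle containing both $b_0$ and $b$, each in the interior of a top cell of every $D_{a_i}$, hence on an edge of $C$.

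Trivalency of $C$ should follow from the regularity of $D$: each codimension-$1$ cell of $D$ is dual to a $2$-simplex in the underlying unimodular simplicial subdivision of $T^*_\R B$, and is therefore adjacent to exactly $3$ top cells of $D$. At a generic vertex of $C$, precisely one of the $D_{a_i}$ meets $C$ along a codimension-$1$ cell while the others meet transversally in top cells, producing the desired $3$-valent branching. Openness of $V$ then follows because the conditions on $b$ (correct dimension of $\Lambda_{b_0, b}$; general position of the tangent hyperplanes; avoidance of codimension-$1$ cells by $b_0$ and $b$) are all open, so any $b$ in a sufficiently small punctured neighborhood of $b_0$ will do.

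The main technical obstacle will be the local trivalency analysis at vertices of $C$, which requires carefully unpacking the dual simplicial structure of $D$ at its codimension-$1$ cells and checking that generic translations eliminate all higher-order coincidences; the remaining steps reduce to standard tropical Bertini-type genericity arguments together with dimension counting in $B$.
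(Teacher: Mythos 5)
Your overall strategy---realizing the trivalent curve as a transverse intersection of $n-1$ translates of the regular $\Theta$-divisor supplied by Lemma \ref{lem:regdiv}---is the same as the paper's, but the way you force the curve through the two marked points is reversed, and that is where there is a genuine gap. You prescribe both $b_0$ and $b$ in advance and then try to choose the translations $a_1,\ldots,a_{n-1}$ inside $\Lambda_{b_0,b}=(b_0-D)\cap(b-D)$ so that every translate passes through both points with transverse tangent hyperplanes at each. The existence of such a choice is exactly the hard point and is not justified. The tangent hyperplane of $D_{a}$ at $b_0$ is determined by which facet of $D$ contains $b_0-a$, and likewise at $b$; so as $a$ ranges over the $(n-2)$-dimensional set $\Lambda_{b_0,b}$ one only sees a constrained collection of \emph{pairs} of facet directions. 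For instance when $b=b_0+\epsilon$ with $\epsilon$ small and generic, every point of $\Lambda_{b_0,b}$ lies within $O(|\epsilon|)$ of a codimension-one cell of $b_0-D$, and near each such cell the stable intersection of the two translated tripods singles out exactly \emph{one} ordered pair of the three adjacent facets. You would therefore have to prove that among the realized pairs there are $n-1$ whose first components have independent normals and whose second components simultaneously have independent normals --- a nontrivial combinatorial statement about the triangulation dual to $D$ and the direction of $\epsilon$, not a soft genericity statement. Relatedly, because the $a_i$ are confined to the positive-codimension set $\Lambda_{b_0,b}$, you cannot appeal to genericity in $B^{n-1}$ to guarantee that the \emph{global} intersection $D_{a_1}\cap\cdots\cap D_{a_{n-1}}$ is everywhere transverse (hence balanced and trivalent); excess-dimensional components away from $b_0$ and $b$ are not ruled out.

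The paper sidesteps this interpolation problem by constraining only one point in advance: it takes small generic $v$ so that $C(v)=\bigcap_i(D+v_i)$ is automatically a transverse, hence trivalent, intersection; translates so that $b_0$ lies on a specific edge $E_1(v)$ emanating from the $0$-cell, namely the intersection of the facets $D_{\{i,n+1\}}+v_i$; and then, rather than prescribing $b$, takes $b$ on another edge $E_{n+1}(v)$ and shows that $b$ sweeps out an open set as one varies $v$ by $\epsilon_i\in TD_{\{i,n+1\}}$, i.e.\ within the directions that fix $E_1(v)$ near $b_0$. Note that the lemma only asks for \emph{some} non-empty open $V$, so nothing forces $V$ to be a punctured neighbourhood of $b_0$. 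To repair your argument you would either need to carry out the combinatorial analysis of which facet-pairs $\Lambda_{b_0,b}$ actually realizes, or reverse the quantifiers as the paper does.
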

\begin{proof}
Recall that $n$ denotes the dimension of $B$.
The case $n=1$ is trivial; next we consider the case $n=2$, which contains the kernel of the general idea.
There exists a regular tropical divisor $D \subset B$ with at least one $0$-cell, by Lemma \ref{lem:regdiv}. 
In this dimension a regular tropical divisor is equivalent to a trivalent tropical curve. 
Let us translate $D$ so that $b_0$ lies on an edge of $D$, close to a $0$-cell $p$. 
Let us place $b$ on another edge of $D$, also close to $p$: then we can move $b$ along its edge, and we can translate the curve $D$ parallel to the edge on which $b_0$ lies, so that $b$ sweeps out an open set $V$.

Now let us consider the case $n \ge 3$; as before we have a regular tropical divisor $D \subset B$ with a $0$-cell $p$, by Lemma \ref{lem:regdiv}. 
Given $v=(v_1,\ldots,v_{n-1}) \in (T_\R B)^{n-1}$, we consider the intersection
\[ C(v) := \bigcap_{i=1}^{n-1} (D+v_i).\]
We say that the intersection is \emph{transverse} if each intersection of cells is transverse; this holds if $v$ is sufficiently small and lies in the complement of a finite union of proper linear subspaces.
When the intersection is transverse, $C(v)$ comes naturally equipped with the structure of a tropical curve \cite[Section 4.2]{Mikhalkin2006}. 
The transverse intersection of \emph{regular} tropical divisors is furthermore a \emph{trivalent} tropical curve. 

Now in a sufficiently small neighbourhood $N$ of the $0$-cell $p$, $D$ looks like $\Trop(f)$ where $f = \max_{i=1,\ldots,n+1} \{f_i\}$ with $df_i \in T^*_\Z B$ forming a top-dimensional simplex. 
Inside $N$ the cells of $D$ are indexed by the subsets $K \subset \{1,\ldots, n+1\}$ such that $|K| \ge 2$, with the cell $D_K$ corresponding to the locus where $\{f_i = f \, \forall \, i \in K\}$ (and having codimension $|K|-1$). 
If we choose sufficiently small generic $v$ so that $C(v)$ is a transverse intersection, then $N \cap C(v)$ will be a tropical curve with non-compact edges $E_i(v)$ exiting $N$ parallel to the one-cells $D_{\overline{\{i\}}}$, for $i=1,\ldots,n+1$. 

By an appropriate choice of $v$ we may ensure that $E_1(v)$ is locally the transverse intersection of $D_{\{i,n+1\}}+v_i$ for $i = 2,\ldots,n$. 
We translate so that $b_0$ lies on $E_1(v)$ and choose $b$ lying on $E_{n+1}(v)$. 
We will now show that it is possible to vary $v$ so that $E_1(v)$ continues to pass through $b_0$ but $b$ sweeps out an open set.

Indeed, if we choose $v+\epsilon = (v_1+\epsilon_1,\ldots,v_{n-1}+\epsilon_{n-1})$ so that $\epsilon_i \in T D_{\{i,n+1\}}$, then $E_1(v) = E_1(v+\epsilon)$ is unchanged near $b_0$ for $\epsilon$ sufficiently small. 
In particular $b_0$ still lies on $C(v+\epsilon)$. 
At the point $b \in E_{n+1}(v)$ we have
\[NE_{n+1}(v) \simeq \bigoplus_{i=1}^{n-1} ND_{K_i}\]
by transversality, where $K_1,\ldots,K_{n-1} \subset \{1,\ldots,n+1\}$ are distinct two-element subsets not containing $n+1$ (which subsets $K_i$ arise depends on the choice of $v$). 
Note that $\{i,n+1\} \neq K_i$ (since $n+1 \notin K_i$), so the projection $TD_{\{i,n+1\}} \to ND_{K_i}$ is surjective for each $i$. 
So by varying $\epsilon_i \in TD_{\{i,n+1\}}$ we can move the edge $E_{n+1}(v)$ in any normal direction we desire; combined with the fact we can move $b$ along the edge $E_{n+1}(v)$, it follows that $b$ sweeps out an open set $V$ as required.
\end{proof}

\begin{lem}
\label{lem:curvesexist}
Let $B$ be a polarized tropical affine torus, and $b_0 \in B$. 
Then there is an open dense set $U \subset B$ such that, for all $b \in U$, the points $b_0$ and $b$ lie on edges of a trivalent tropical curve $C$ in $B$. 
\end{lem}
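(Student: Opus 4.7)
My plan is to upgrade the non-empty open set $V$ from Lemma \ref{lem:somecurvesexist} to an open dense set via a \emph{concatenation} procedure for trivalent tropical curves. Using the torus structure of $B$, I would first translate the construction of Lemma \ref{lem:somecurvesexist} to every base point $p \in B$, obtaining non-empty open sets $V(p) \subset B$ that are all translates of a fixed non-empty open set $W := V(b_0) - b_0 \subset B$: every $q \in V(p)$ lies on an edge of a trivalent tropical curve together with $p$, with both points interior to their respective edges.

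The key geometric input is a concatenation lemma: given trivalent tropical curves $C_1$ containing $b, b'$ in the interior of edges and $C_2$ containing $b', b''$ in the interior of edges, I would construct a new trivalent tropical curve containing $b$ and $b''$ in the interior of edges. After a small generic perturbation of $C_1, C_2$ we may assume $C_1 \cup C_2$ has only transverse intersections in the interior of edges (including at $b'$); at each such $4$-valent vertex, with incident edge directions $\pm u, \pm v \in T_\Z B$, I insert a short edge of direction $w = -(u+v)$, which splits the vertex into two trivalent vertices satisfying the balancing condition. Resolving all intersections in this manner yields a trivalent tropical curve with $b$ and $b''$ still interior to their edges.

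For density, given $b^* \in B$ and a neighborhood $N$ of $b^*$, I would construct a chain $b_0 = b^{(0)}, b^{(1)}, \ldots, b^{(k)} \in N$ with $b^{(i+1)} \in V(b^{(i)})$, equivalently $b^{(i+1)} - b^{(i)} \in W$. Existence of such chains reduces to the claim that $kW = B$ for sufficiently large $k$: picking $w_0 \in W$, the translated set $W - w_0$ is an open neighborhood of $0$ in the compact connected flat torus $B$, hence contains a metric ball, and iterated Minkowski sums $k(W-w_0)$ exhaust $B$ once the scaled radius exceeds the diameter; then $kW = kw_0 + k(W-w_0) = B$. Iterated concatenation then provides a trivalent tropical curve through $b_0$ and the chain endpoint $b^{(k)} \in N$, so the resulting set $U$ of such $b$ meets every neighborhood of every $b^* \in B$. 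Openness of $U$ follows from a local deformation argument: the concatenated curve depends on enough parameters to move $b^{(k)}$ over an open set while keeping $b_0$ fixed in the interior of its edge.

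The main obstacle is making the concatenation genuinely rigorous: one must verify that the generic perturbations of $C_1$ and $C_2$ can be arranged so all intersections are transverse and interior to edges (and the marked points remain interior), and that the simultaneous resolution of all $4$-valent vertices produces an embedded trivalent tropical curve with no collisions between the inserted short edges and the rest of the curve. This is a transversality/genericity argument over the finite-dimensional moduli of trivalent tropical curves in $B$, analogous to the argument used in the higher-dimensional case of Lemma \ref{lem:somecurvesexist}.
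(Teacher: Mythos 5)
Your overall strategy (translate the open set $V$ from Lemma \ref{lem:somecurvesexist} around $B$, chain the translates together via Minkowski sums, and join the resulting curves through the intermediate points by a concatenation lemma) is genuinely different from the paper's argument. The paper simply pulls back a single curve along the multiplication-by-$k$ isogeny $\pi_k: B \to B$ fixing $b_0$: the preimage $\pi_k^{-1}(C)$ of a trivalent tropical curve is again a trivalent tropical curve, still contains $b_0$, and contains the $1/k$-dense fibre $\pi_k^{-1}(b)$ for each $b \in V$, so $U = \bigcup_k \pi_k^{-1}(V)$ works with no surgery on curves at all. Your Minkowski-sum density argument and the openness argument are fine as far as they go.

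However, the concatenation lemma has a genuine gap, and it is not merely a transversality issue. Resolving a transverse $4$-valent crossing with edge-ends $u,-u,v,-v$ into two trivalent vertices joined by a short edge in direction $-(u+v)$ is not a local operation on tropical curves: the vertex that gets displaced along the inserted edge carries with it the germs of two of the four incident edges, and those edges must still have integral slope and still terminate at the (unmoved) far endpoints of the original edges. This forces either a change of direction (violating integrality) or a global deformation of the curve, and tropical curves in a compact tropical affine torus are typically rigid modulo translation. A concrete failure: in $B = \R^2/\Z^2$ let $C_1$ and $C_2$ be the two coordinate circles through the origin; their union has a single transverse $4$-valent vertex, yet there is no nearby connected trivalent tropical curve, since the splitting $\{u,v\}\,|\,\{-u,-v\}$ would require the displacement $t\cdot(-1,-1)$ of one new vertex to be integral. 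So ``resolving all intersections in this manner'' does not in general produce a tropical curve at all, and the connectedness you need for the application in Proposition \ref{prop:chowdivtri} (divisibility of $[p-p_0]$ in $\Jac(C)$, which fails if $p$ and $p_0$ lie in different components) is exactly what the failed resolution was supposed to deliver. Repairing this would require showing that the specific curves produced by Lemma \ref{lem:somecurvesexist} admit the required global deformation, or realizing the union as a degeneration within a family of connected trivalent curves; this is substantially more than a genericity argument, and the isogeny trick avoids the problem entirely.
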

\begin{proof}
We have shown in Lemma \ref{lem:somecurvesexist} that there exists an open set $V \subset B$ with the desired property. 
We observe that for any $k \in \N$ we have an isogeny $\pi_k: B \to B$ given by multiplication by $k$ (and sending $b_0 \mapsto b_0$). 
If $C$ is a trivalent tropical curve with $b_0$ and $b$ on its edges, then $\pi_k^{-1}(C)$ is a trivalent tropical curve with $b_0$ and $b'$ on its edges, for any $b' \in \pi_k^{-1}(b)$. 
It follows that the set
\begin{equation}
\label{eqn:denseU}
 U:= \bigcup_{k \in \N} \pi_k^{-1}(V)
\end{equation}
has the desired property. 
This set is obviously open, and it is furthermore dense. 
In fact the set \eqref{eqn:denseU} is dense whenever $V$ is non-empty. 
To see this, choose a flat Riemannian metric on $B$ so that the image under the exponential map of the ball of radius $1$ in $T_\R B$ contains a fundamental domain of $B$.
Then given a point $p \in V$, any ball of radius $1/k$ in $B$ contains an element of $\pi_k^{-1}(\{p\})$. 
It follows that the union over $k$ of these sets is dense. 
\end{proof}

\begin{proof}[Proof of Proposition \ref{prop:chowdivtri}]
It suffices to prove that, for any $b_1, b_2 \in B$, the class $[b_2 - b_1]$ is infinitely divisible in $\CH_0^\tri(B)_\hom$. 
This will be achieved if we show that there exists a point $b_3$ such that both $[b_3-b_1]$ and $[b_2-b_3]$ are infinitely divisible in the same group. 
In particular it suffices to prove this for any $b_3$ in $(U + b_1 - b_0) \cap (U + b_2-b_0)$, where $b_0 \in B$ and $U$ is the set constructed in Lemma \ref{lem:curvesexist}, since the latter is open and dense and in particular the intersection is non-empty.

Thus, it suffices to prove that the class $[b-b_0]$ is infinitely divisible for any $b \in U$. 
By construction, there exists a trivalent parametrized tropical curve $h:C \to B$ with points $p, p_0 \in C$ lying on its edges, such that $h(p) = b$ and $h(p_0) = b_0$. 
Since $\CH_0(C)_\hom$ is isomorphic to the tropical affine torus $\Jac(C)$ via the Abel--Jacobi map, the class $[p-p_0]$ is infinitely divisible. 
Thus for any $k \in \N$ there exists a tropical divisor $a = \sum_q a_q \cdot q$ with $k\cdot a$ linearly equivalent to $p-p_0$. 
It follows that $[b-b_0] = h_*[p-p_0] = k \cdot h_* (a)$ is $k$-divisible in $\CH_0(B)_\hom$. 

However we want to check that $[b-b_0]$ is $k$-divisible in $\CH_0^\tri(B)_\hom$. 
Although $C$ is trivalent, the graph construction of Section \ref{subsec:paramcurve} adds certain edges to $C$ so that the resulting tropical curve may no longer be trivalent. 
However, by examining the construction, we see that to preserve trivalency it suffices for the points at which edges are attached to $C$ to be interior to edges of $C$. 
The points $p$ and $p_0$ are interior to the edges of $C$ by construction; and it is easy to check that any tropical divisor $a$ on a curve is linearly equivalent to one whose points are interior to the edges. 
Making this replacement, we have shown that $[b-b_0]$ is $k$-divisible in $\CH_0^\tri(B)_\hom$ for any $k$, thus completing the proof.
\end{proof}

\section{From curves to cobordisms} 
\label{Sec:CurvetoCob}

\subsection{Lagrangian lifts}

Let $B$ be a tropical affine manifold and $X(B) := T^* B/T^*_\Z B$ the corresponding symplectic manifold, with $p: X(B) \to B$ the Lagrangian torus fibration. 
Let $C \subset B$ be a tropical $1$-cycle.
A \emph{choice of vertex neighbourhoods} for $C$ will consist of a disjoint union $U = \sqcup_v U_v$ of balls centred at the vertices $v$ of $C$, which are small enough that each edge of $C$ intersects $U$ only in a neighbourhood of its endpoints. 

\begin{defn}
An \emph{immersed Lagrangian lift} of a tropical $1$-cycle $C \subset B$ is a Lagrangian immersion $i: L \to  X(B)$, proper over $B$,  such that there exists a choice of vertex neighbourhoods $U$ with the property that, outside of $p^{-1}(U)$, $L$ coincides with the union over the edges $E$ of $w_E$ disjoint parallel copies of the conormal to $E$. 
An \emph{embedded Lagrangian lift} is an immersed Lagrangian lift such that $i$ is an embedding.
\end{defn}

\begin{rmk}
Different notions of Lagrangian liftability are introduced in \cite{Matessi2018,Mikhalkin2018}, where restrictions are placed on the form of the Lagrangian $L$ near a vertex; our definition (which is rather ad-hoc) does not place such restrictions.
\end{rmk}

We observe that the question of constructing Lagrangian lifts is completely local:

\begin{lem}
\label{lem:gluelags}
A tropical $1$-cycle $C$ admits an immersed (respectively, embedded) Lagrangian lift if and only if a neighbourhood of each of its vertices admits an immersed (respectively, embedded) Lagrangian lift.  
\end{lem}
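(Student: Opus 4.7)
The forward implication is immediate: given a global immersed (resp.\ embedded) lift $L$, its restriction to $p^{-1}(N)$ for any sufficiently small neighbourhood $N$ of a vertex $v$ gives a local lift of $C \cap N$. So the content of the lemma lies in the converse.

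For the reverse direction, suppose we are given immersed Lagrangian lifts $L_v$ of a neighbourhood of each vertex. The plan is to shrink vertex neighbourhoods so that the $L_v$ are supported in disjoint regions, and then extend by parallel copies of conormals along the interiors of the edges. Specifically, after shrinking the $N_v$ and choosing vertex subneighbourhoods $U_v \subset N_v$, each $L_v$ has, over the collar $N_v \setminus U_v$ of each incident edge $E$, exactly the form of $w_E$ parallel copies of the conormal to $E$. A parallel copy of the conormal over an edge $E$ is determined by its transverse location at any single point of $E$, an element of the circle $T^*_b B/(T^*_\Z B + \mathrm{conormal}_\R)$ for $b \in E$; so $L_v$ determines a multiset of $w_E$ points in this circle at each incident edge. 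The global lift $L$ exists provided, for each edge $E$ joining vertices $v$ and $v'$, the multisets determined by $L_v$ and $L_{v'}$ coincide after parallel transport along $E$: then extending the parallel copies over the whole of $E$ and taking the union with the $L_v$ gives the required $L$.

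The main obstacle, and the step that requires the real work, is arranging these edge-end multisets to match. To handle it, I would use the fact that the lifts $L_v$ can be modified within their local nature: fiberwise translation of $L_v$ by any element of $T^*_v B/T^*_\Z B$ is a Hamiltonian isotopy preserving the class of local lifts, and translates the multiset at each incident edge by the projection of this element to the corresponding $S^1$. Walking along a spanning tree of the (underlying graph of the) tropical cycle, one can inductively choose these translations so that all tree-edge compatibilities are satisfied; for each non-tree edge one has to check that the already-fixed translations are automatically compatible. I expect this to follow from the rational structure of the lattice projections and the flexibility of choosing the neighbourhoods $N_v$, $U_v$ themselves (which rescales the relevant multiset locations), though some care is needed to formulate the obstruction cleanly.

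The embedded case requires only the additional observation that the multisets are finite and separated in their circles, so sufficiently small translations of the $L_v$, combined with the embeddedness of each local $L_v$ and the disjointness of the vertex neighbourhoods in the $U_v$, yield an embedded global $L$.
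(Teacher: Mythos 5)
Your identification of the matching problem at the two ends of each edge is exactly the issue the paper addresses, and your forward direction and general gluing setup are fine. But your proposed resolution — translating each local lift $L_v$ rigidly by a single element of the fibre torus and then running a spanning-tree induction — takes a wrong turn and creates a genuine gap. A global fibrewise translation of $L_v$ shifts the multisets at \emph{all} edges incident to $v$ simultaneously and by the same amount (the projection of one fixed covector), so after fixing the tree edges you are left with a consistency condition on every non-tree edge that is in general \emph{not} automatically satisfied: for instance, for a loop edge based at a single vertex, translating $L_v$ moves both end-multisets identically, so a mismatch coming from parallel transport around the loop can never be cancelled this way. The appeal to "the rational structure of the lattice projections" does not rescue this; the obstruction is a real-valued discrepancy living in a circle and there is no reason for it to vanish.

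The fix in the paper is both simpler and local to each edge, so no global consistency problem ever arises: one inserts a "twist region" in the interior of each edge $E$, over which each of the $w_E$ parallel copies $L_1,\ldots,L_{w_E}$ of the conormal is modified by the time-one flow of a Hamiltonian vector field $X_{f_i}$ for a function $f_i$ pulled back from the base and supported in the edge interior (a different $f_i$ for each copy). Such a flow translates the copy in the fibre direction by $df_i$, so by choosing the $f_i$ appropriately one can interpolate between \emph{any} configuration of parallel copies at one end of the edge and any other at the other end; one then enlarges the vertex neighbourhoods to engulf the twist regions. Disjointness of the finitely many copies is easily preserved, giving the embedded case. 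The key idea you are missing is that the conormal copies along an edge interior can be moved \emph{independently of one another and independently at each edge}, rather than only via a rigid translation of the whole local piece $L_v$.
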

\begin{proof}
The `only if' direction is obvious. 
For the `if' direction, the only concern would be if the $w_E$ parallel copies of the conormal to $E$ did not `match up' at the two ends of $E$. 
However this can easily be fixed by inserting a `twist region' along $E$ where the parallel copies $L_1,\ldots,L_{w_E}$ of the conormal to $E$ are modified by applying the flow of Hamiltonian vector fields $X_{f_i}$ associated to appropriate functions $f_i$ pulled back from the base $B$ (and then enlarging the choice of vertex neighbourhoods to engulf the twist regions).  
\end{proof}

\subsection{Local exactness}

Given an immersed Lagrangian lift $L$ of a tropical $1$-cycle $C$ with an appropriate choice of vertex neighbourhoods $U_v$, we have a covering of $L$ by open sets $L_v := (p\circ i)^{-1} (U_v)$ associated to the vertices $v$ of $C$, together with open sets $L_E$ associated to the edges $E$ of $C$, over which $L_E$ is a disjoint union of $w_E$ parallel copies of the conormal to $E$. 
We denote by $i_v: L_v \to X(B)$, $i_E: L_E \to X(B)$ the corresponding Lagrangian immersions.

On the preimage $p^{-1}(U_b)$ of any simply-connected neighbourhood $U_b$ of a point $b \in B$, there exists a primitive $\lambda_b := \sum_i (q_i - b_i)\cdot dp_i$ for the symplectic form $\omega$. 
If $i: L \to X(B)$ is an immersed Lagrangian lift, then we observe that for each vertex $v$ of $C$, $i_v^* \lambda_v$ is a closed one-form.

\begin{defn}\label{defn:locally_exact}
An immersed Lagrangian lift is called \emph{locally exact} if the classes $[i_v^*\lambda_v] \in H^1(L_v;\R)$ vanish for all vertices $v$.
\end{defn}

\begin{rmk}
We observe that in fact $i_v^*\lambda_v$ vanishes near the boundary. 
Thus we can declare $L$ to be \emph{locally strongly exact} if $[i_v^*\lambda_v] = 0$ in $H^1(L_v,\partial L_v;\R)$. 
Note however that inserting `twist regions' as in the proof of Lemma \ref{lem:gluelags} will in general destroy local strong exactness: for this purpose it matters where one puts the parallel copies of the conormal. 
\end{rmk}

%\begin{rmk}
%We will see in Section \ref{Sec:vertices_revisited} that there are local obstructions to the existence of locally exact Lagrangian lifts of tropical $1$-cycles. Suppose the weights $w_E$ on edges all belong to $\{ \pm 1\}$. Then, 
% if such a lift exists,  the tropical $1$-cycle remains balanced after we replace each weight $w_E$ with its magnitude $|w_E|$; so we can replace the tropical $1$-cycle with a tropical curve.
%\end{rmk}

\subsection{Gradings\label{Sec:gradings}}

Let $B$ be an oriented tropical affine manifold. 
Observe that we have a canonical decomposition $T(X(B)) \simeq TB \oplus T^* B$. 
An almost-complex structure is said to be \emph{split} if it takes $TB$ to $T^*B$ and vice-versa. 
We will consider gradings with respect to the holomorphic volume form $\eta_J$ corresponding to an $\omega$-compatible split almost-complex structure $J$ (cf. Section \ref{Sec:AlbCob}).

\begin{lem}
\label{lem:speclagsplit}
Let $C \subset B$ be a tropical affine submanifold of dimension $d$, and $N(C) \subset X(B)$ the conormal. 
Then the orientation on $B$ induces one on $N(C) \subset X(B)$, and $N(C)$ is special Lagrangian of phase $d\pi/2$ with respect to $\eta_J$, for any split almost-complex structure $J$.
\end{lem}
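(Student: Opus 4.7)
My plan is to prove the statement by a direct local computation in affine coordinates adapted to $C$, after first constructing the canonical orientation on $N(C)$.

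\textbf{Orientation.} From the exact sequence $0 \to TC \to TB|_C \to TB/TC \to 0$ and the canonical identification $N^*C \cong (TB/TC)^*$, we have
\[ \det T(N(C)) \;\cong\; \det TC \otimes \det N^*C \;\cong\; \det TC \otimes \det(TB/TC)^{-1}. \]
The orientation on $B$ trivializes $\det TB \cong \det TC \otimes \det(TB/TC)$, making $\det TC$ and $\det(TB/TC)$ canonically inverse to each other in the sense that $\det TC \otimes \det(TB/TC)^{-1} \cong (\det TC)^{\otimes 2}$; and the square of any real line is canonically oriented. Concretely, in oriented affine coordinates $b_1,\ldots,b_n$ on $B$ chosen so that $C=\{b_{d+1}=\cdots=b_n=0\}$, the ordered basis $(\partial_{b_1},\ldots,\partial_{b_d},\partial_{p_{d+1}},\ldots,\partial_{p_n})$ of $T(N(C))$ is positive; one checks that this prescription is invariant under tropical affine changes of coordinates on $B$ preserving $C$ and the orientation.

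\textbf{Setup for the computation.} Let $p_1,\ldots,p_n$ be the induced fibre coordinates on $X(B)=T^*B/T^*_{\Z}B$. Then locally
\[ N(C) \;=\; \{\, b_{d+1} = \cdots = b_n = 0,\ \; p_1 = \cdots = p_d = 0 \,\}. \]
A split $\omega$-compatible $J$ is determined by a symmetric positive-definite matrix $A$ via $J\partial_{b_i}=\sum_j A_{ij}\partial_{p_j}$, hence $J\partial_{p_i}=-\sum_j (A^{-1})_{ij}\partial_{b_j}$. A direct computation gives $d^c b_i = \sum_j (A^{-1})_{ij}\, dp_j$. Restricting to $N(C)$, where $db_i$ vanishes for $i>d$ and $dp_j$ vanishes for $j\le d$, we obtain
\[ (d^c b_i + i\, db_i)\big|_{N(C)} \;=\; \begin{cases} \sum_{j>d} (A^{-1})_{ij}\, dp_j \;+\; i\, db_i, & i\le d,\\[2pt] \sum_{j>d} (A^{-1})_{ij}\, dp_j, & i>d. \end{cases} \]

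\textbf{Wedging and the phase.} The wedge of the $i>d$ factors is a top form in $dp_{d+1},\ldots,dp_n$, namely $\det\bigl(A^{-1}|_{\{d+1,\ldots,n\}}\bigr)\cdot dp_{d+1}\wedge\cdots\wedge dp_n$. When we further wedge with the $i\le d$ factors, any $dp_j$ contribution would repeat an index and vanish, so only the $i\, db_i$ terms survive. Collecting signs,
\[ \eta_J\big|_{N(C)} \;=\; i^d\cdot \det\bigl(A^{-1}|_{\{d+1,\ldots,n\}}\bigr)\cdot db_1\wedge\cdots\wedge db_d\wedge dp_{d+1}\wedge\cdots\wedge dp_n. \]
Since $A^{-1}$ is symmetric positive-definite, its $(n-d)\times(n-d)$ principal minor is a positive real, and the form $db_1\wedge\cdots\wedge db_d\wedge dp_{d+1}\wedge\cdots\wedge dp_n$ is the oriented volume form on $N(C)$. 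Thus $\eta_J|_{N(C)}=e^{id\pi/2}\cdot(\text{positive real})\cdot\mathrm{vol}_{N(C)}$, which exhibits $N(C)$ as special Lagrangian of phase $d\pi/2$.

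The only genuine subtlety is the orientation convention: one has to pin down the canonical orientation on $N(C)$ so that the answer is exactly $d\pi/2$ rather than just $d\pi/2$ modulo $\pi$. Once that is done using the square-of-a-line trick, the rest is bookkeeping, and in particular the answer is manifestly independent of the choice of $A$ (i.e.\ of the split $J$).
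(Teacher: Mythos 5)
Your proof is correct and follows essentially the same route as the paper's: both work in affine coordinates adapted to $C$, use that $d^c b_i$ restricts to a symmetric positive-definite combination of the fibre one-forms, and expand the wedge product over $N(C)$ to extract $i^d$ times a positive principal minor times the oriented volume form. The only difference is cosmetic — the paper simply declares the adapted coordinate frame on $N(C)$ to be oriented, whereas you justify that convention invariantly via $\det T(N(C)) \cong (\det TC)^{\otimes 2}$.
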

\begin{proof}
We may choose oriented local affine coordinates $b_1,\ldots,b_n$ with respect to which $C=\{b_1 = \ldots = b_{n-d} = 0\}$. 
Then we have local Arnold--Liouville coordinates $(b_1,\theta_1,\ldots,b_n,\theta_n)$ on $X(B)$, and $N(C)= \{b_1 = \ldots = b_{n-d} = \theta_{n-d+1} = \ldots = \theta_n = 0\}$.
We declare the local coordinates $(c_1,\ldots,c_n) = (\theta_1,\ldots,\theta_{n-d},b_{n-d+1},\ldots,b_n)$ on $N(C)$ to be oriented. 

We now observe that $d^cb_j = \sum_k a_{jk} d\theta_k$ because $J$ is split, and $a_{jk}$ is symmetric positive definite because $J$ is $\omega$-compatible.
Therefore the restriction of $\eta_J$ to $N(C)$ is
\begin{align*} 
\eta_J &= \left(\sum_k a_{1k} d\theta_k + idb_1\right) \wedge \ldots \wedge \left(\sum_k a_{nk} d\theta_k + idb_n\right) \\
&= \left(\sum_{k=1}^{n-d} a_{1k}d\theta_k\right) \wedge \ldots \wedge \left(\sum_{k=1}^{n-d} a_{n-d,k}d\theta_k\right) \wedge idb_{n-d+1} \wedge \ldots \wedge idb_n \\
&= i^d \cdot \det\left(a_{ij}\right)_{1 \le i,j \le n-d} \cdot d\theta_1 \wedge \ldots d\theta_{n-d} \wedge db_{n-d+1} \wedge \ldots \wedge db_n.
\end{align*}
Since $a_{ij}$ is positive definite, its upper left $(n-d) \times (n-d)$ minor is positive definite and in particular has positive determinant. 
Therefore the phase of $N(C)$ is constant equal to the argument of $i^d$, which is $d\pi/2$ as required.
\end{proof}

Now let $L$ be a Lagrangian lift of a tropical $1$-cycle $C$ in an oriented tropical affine manifold $B$. 
We equip the components $L_E$ lying outside the vertex neighbourhoods with orientations: they coincide with the one determined by the orientation of $B$ if the weight of $E$ is positive, and are opposite to it when the weight is negative. 
Then the components $L_E$ are graded by Lemma \ref{lem:speclagsplit}: we may choose the almost-complex structure $J$ to be split, and the phase function to be constant equal to $\pi/2$ over the edges of positive weight, and constant equal to $-\pi/2$ over the edges of negative weight.

\begin{defn}
\label{lem:locgrad}
Suppose $B$ is oriented. 
A Lagrangian lift $L$ is \emph{locally graded} if each Lagrangian immersion $i_v: L_v \to X(B)$ admits an orientation and a grading $\theta^\#$, which agree near the boundary with the orientations and gradings $\pm \pi/2$ over the regions $L_E$ described above.
\end{defn}

It is clear from our discussion that a locally graded Lagrangian lift is graded (we observe that the phase function will take the same constant value on either side of a `twist region', even though it may vary in between).

\subsection{Lagrangian lifts of tropical curves}

The following result has been known to experts for some time. 
The first-named author heard it in a talk by Denis Auroux at MSRI in 2009, and related results appear in \cite{Shende2015,Matessi2018,Mikhalkin2018,Mak-Ruddat,Hicks2018} (it is by no means the main result proved in any of these references.)

\begin{thm}
\label{thm:trivconst}
Let $C$ be a tropical curve in $\R^n$ with a single trivalent vertex. 
Then $C$ admits an embedded, locally strongly exact, locally graded Lagrangian lift.
\end{thm}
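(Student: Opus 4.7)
The plan is to reduce to the two-dimensional case by exploiting the balancing condition, and then to invoke an explicit Lagrangian pair-of-pants construction in $X(\R^2)$.

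First, I would use the balancing relation $w_1 v_1 + w_2 v_2 + w_3 v_3 = 0$ to deduce that the three primitive edge vectors $v_i \in \Z^n$ span a sublattice of rank at most $2$. After an $\SL(n,\Z)$ change of coordinates (which is a tropical affine automorphism, hence lifts to a symplectomorphism of $X(\R^n)$ preserving the Maslov grading) and a translation, I may assume the vertex is at the origin and the $v_i$ lie in the coordinate plane $\R^2 \times \{0\} \subset \R^n$. Splitting $X(\R^n) = X(\R^2) \times X(\R^{n-2})$ symplectically, it then suffices to produce an embedded, locally strongly exact, locally graded Lagrangian lift $\Sigma \subset X(\R^2)$ of the planar curve: one then sets
\[ L := \Sigma \times Z \subset X(\R^n), \qquad Z := \{0\}_{\R^{n-2}} \times T^{n-2}. \]
Over each edge $E_i$, $L$ consists of $w_i$ parallel copies of $E_i \times \{0\}_{\R^{n-2}} \times S^1_{v_i^\perp} \times T^{n-2}$, which is exactly the $w_i$-fold copy of the conormal $N(E_i) \subset X(\R^n)$. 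The three desired properties for $L$ follow from those of $\Sigma$ together with the fact that $Z$ is embedded, exact, and special Lagrangian of phase $(n-2)\pi/2$ by \Cref{lem:speclagsplit}.

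For the two-dimensional case I would identify $X(\R^2) \cong (\C^*)^2$ via $(x,\theta) \mapsto (e^{x_1 + i\theta_1}, e^{x_2 + i\theta_2})$ and take $\Sigma$ to be modelled on the real locus of a smooth algebraic curve $V(P) = \{P(z_1,z_2) = 0\} \subset (\C^*)^2$ whose Newton polygon is the lattice triangle dual to the trivalent vertex, with edge lengths $w_1, w_2, w_3$, and with real positive coefficients chosen so that the amoeba of $V(P)$ tropicalises to $C$ (cf.\ the constructions of \cite{Matessi2018, Mikhalkin2018, Mak-Ruddat}). For $w_i = 1$ this is the standard Lagrangian pair of pants, and the weighted case is a branched cover thereof. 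Embeddedness is automatic from smoothness of $V(P)$. Local gradedness follows from \Cref{lem:speclagsplit}, which forces a constant phase $\pi/2$ on each edge component; since $V(P)$ is special Lagrangian for the holomorphic volume form $d\log z_1 \wedge d\log z_2$, these three constants agree and extend continuously across the vertex. For local strong exactness, I would use the anti-symplectic involution $(x,\theta) \mapsto (x,-\theta)$, which preserves $\Sigma$ and pulls the Liouville form back to its negative, forcing $[\lambda|_{\Sigma_v}]$ to be $2$-torsion and hence zero in $H^1(\Sigma_v, \partial \Sigma_v; \R)$.

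The main technical obstacle I anticipate is the strong form of local exactness: a tropical-Lagrangian lift is naturally only locally exact, and promoting the vanishing to relative cohomology requires care about where the edge components of $L$ are located within each conormal annulus. I would handle this by first establishing local exactness using the anti-symplectic symmetry, then choosing the transverse positions of the $w_i$ parallel copies of $N(E_i)$ on the edges so that the primitive of $\lambda|_{L_v}$ vanishes near $\partial L_v$; any residual discrepancy can be absorbed into the ``twist regions'' of \Cref{lem:gluelags} placed away from the vertex.
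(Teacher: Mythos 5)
Your architecture is the same as the paper's: reduce to $n=2$ by taking a product with a Lagrangian torus fibre of $X(\R^{n-2})$ (your explicit justification, via balancing and an $\SL(n,\Z)$ change of coordinates, that the vertex lies in a rational $2$-plane is a detail the paper leaves implicit), build the two-dimensional lift from the Lagrangian pair of pants of \cite{Matessi2018,Mikhalkin2018}, and obtain general trivalent vertices as covers of the standard one. But two of your verifications have real gaps. You never actually specify $\Sigma$: the real locus of $V(P)$ is one-dimensional, and $V(P)$ itself is a holomorphic curve, hence symplectic and in particular not Lagrangian, so the assertion that ``$V(P)$ is special Lagrangian for $d\log z_1\wedge d\log z_2$'' is false and cannot ground local gradedness or embeddedness. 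The object you need is the Lagrangian pair of pants of the cited references, which is constructed quite differently; the paper's gradedness check uses that Mikhalkin's lift is a small perturbation of a genuine special Lagrangian of phase $\pi/2$. Relatedly, a curve with Newton polygon of edge lattice-lengths $w_i$ and generic coefficients tropicalises to a trivalent tree with $w_i$ weight-one ends in each direction rather than a single weighted vertex; the correct route for weights and non-standard directions is the one you also mention in passing, namely an (unbranched) cover of the standard weight-one pair of pants under the covering of $(\C^*)^2$ induced by an integer matrix.

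Second, the involution argument for exactness is incomplete as stated. From $\iota^*\lambda=-\lambda$ and $\iota(\Sigma_v)=\Sigma_v$ you get $(\iota|_{\Sigma_v})^*[\lambda]=-[\lambda]$, which forces $[\lambda]=0$ only on the $(+1)$-eigenspace of $(\iota|_{\Sigma_v})^*$. On $H^1(\Sigma_v;\R)$ the involution acts by $-1$ (it reverses each boundary circle of the pair of pants, and these generate $H_1$), so there the ``$2$-torsion'' deduction is vacuous. It does act by $+1$ on $H^1(\Sigma_v,\partial\Sigma_v;\R)$, but precisely because $H_1(\Sigma_v,\partial\Sigma_v)$ is generated by arcs joining fixed points of $\iota$ --- the seams, i.e.\ the intersection of $\Sigma_v$ with the real locus. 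Once the seams are in hand, the paper's argument is shorter and sign-free: $\lambda_v$ vanishes identically along them, so all relative periods vanish. Note also that for an edge of weight $w_E\ge 3$ the parallel conormal copies cannot all lie in the real locus, so $\iota$ only permutes them; strong exactness for weighted vertices is instead inherited from the standard pair of pants through the cover.
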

\begin{proof}
An embedded Lagrangian lift of the standard tropical pair of paints in $\R^2$ is constructed in \cite{Mikhalkin2018}, which is locally graded because it is a small perturbation of a special Lagrangian submanifold. 
One easily verifies that it is locally strongly exact, as a basis for $H_1(L,\partial L)$ is given by the `seams' of the pants, which are its intersection with a component of the real locus, along which $\lambda_v$ vanishes. 
The generalization to an arbitrary trivalent vertex in $\R^2$ is made in \cite[Example 5.3]{Matessi2018} by taking a cover of the standard pair of pants: it is straightforward that the properties of being embedded, locally strongly exact, and locally graded transfer to covers. 
The generalization to arbitrary $n$ follows by taking a product with the Lagrangian torus fibre over the origin in $T^*(\R^{n-2})/T^*_\Z(\R^{n-2})$. 
\end{proof}
%It can be proved using constructions of 
%
%For example, the construction of the embedded Lagrangian lift of the standard tropical pair of pants in $\R^2$ can be found in \cite[Section 3]{Matessi2018}, its local exactness is checked in the proof of Theorem 5.6 of \emph{op. cit.}, and the vanishing of its Maslov class is checked in Proposition 3.29 of \emph{op. cit.} 
%The `locally strongly exact' condition is not checked in \emph{op. cit.}, but is easily verified: a basis for $H_1(L,\partial L)$ is given by the `seams' of the pants which are its intersection with a component of the real locus, along which $\lambda_v$ vanishes. 
%The vanishing of the Maslov class is also not sufficient for local gradedness, but again this can be easily verified: in fact this is most straightforward if one uses the construction of the Lagrangian lift of the standard tropical pair of pants in \cite{Mikhalkin2018}, since that is by construction a small perturbation of a special Lagrangian submanifold. 

\begin{cor}
\label{cor:trivlift}
Any trivalent tropical curve admits an embedded, locally exact, graded Lagrangian lift. 
\end{cor}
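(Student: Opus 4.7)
The plan is to deduce the corollary by combining the local construction of Theorem \ref{thm:trivconst} with the gluing principle in Lemma \ref{lem:gluelags}. First I would choose a choice of vertex neighbourhoods $\{U_v\}$ for $C$ that are so small that each $U_v$ sits inside a tropical affine chart on $B$, i.e.\ a chart identifying $U_v$ with a neighbourhood of the origin in $\R^n$; under this identification $C \cap U_v$ becomes a trivalent tropical curve in $\R^n$ with a single vertex at the origin, and $X(B)|_{U_v}$ becomes the standard $T^*_\R\R^n/T^*_\Z\R^n$. Applying Theorem \ref{thm:trivconst} to each vertex then produces an embedded, locally strongly exact, locally graded Lagrangian lift $L_v \subset p^{-1}(U_v)$.

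The next step is to assemble these local pieces into a global lift. Along each edge $E$ of $C$, the two vertex-lifts $L_{v^-}$ and $L_{v^+}$ at its endpoints each prescribe $w_E$ parallel copies of the conormal $N(E)$ inside $p^{-1}(E)$, but these two collections of copies may not coincide. As in the proof of Lemma \ref{lem:gluelags}, I would remedy this by interposing a twist region along the interior of $E$: choose functions $f_i$ on $U \supset E$ pulled back from $B$ whose Hamiltonian flows (vertical, and tangent to $N(E)$) slide each parallel conormal copy in the fibre direction, so as to match the $v^-$-copies with the $v^+$-copies. Enlarging $U_{v^\pm}$ to engulf the twist region yields an embedded immersed Lagrangian lift $i: L \to X(B)$.

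It then remains to confirm that the glued $L$ is itself locally exact and graded, which is the part of the argument most prone to slippage. For local exactness, since the twist regions sit outside the vertex neighbourhoods the local primitive $\lambda_v = \sum_i (q_i - b_i)\,dp_i$ is untouched by them, so the class $[i_v^* \lambda_v] \in H^1(L_v;\R)$ vanishes because it vanished in Theorem \ref{thm:trivconst}. For the grading, recall from Lemma \ref{lem:speclagsplit} that each conormal piece over an edge is special Lagrangian of phase $\pi/2$ (or $-\pi/2$ after reversing orientation when the weight is negative), and the local gradings produced by Theorem \ref{thm:trivconst} near each vertex match this constant phase near $\partial L_v$. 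The twist region is a Hamiltonian isotopy of the conormal performed through Lagrangian submanifolds which remain of the form $N(E)$ set-wise, so the phase function is still $\pm \pi/2$ on the twist region's boundary; the graded lifts agree at the interface and hence extend uniquely to a global grading on $L$.

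Since the orientation conventions of Definition \ref{lem:locgrad} were built into the statement of Theorem \ref{thm:trivconst}, the two checks above package into the conclusion that $L$ is an embedded, locally exact, graded Lagrangian lift of $C$. The hardest part is really only the bookkeeping in the twist region: verifying that the Hamiltonian isotopies used to splice the local lifts can be chosen simultaneously compatibly with the edge-by-edge orientations and with the constant phase values of $\pm\pi/2$, so that gradedness survives the gluing.
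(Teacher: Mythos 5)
Your proposal is correct and follows essentially the same route as the paper: apply Theorem \ref{thm:trivconst} in an affine chart around each trivalent vertex and glue via the twist regions of Lemma \ref{lem:gluelags}, checking that local exactness and gradedness survive (the paper notes that local \emph{strong} exactness may not). One small imprecision: inside a twist region the Lagrangian is the graph of $df_i$ over the conormal rather than the conormal itself, so the phase genuinely varies there; what matters (and what you correctly use) is that it returns to the constant value $\pm\pi/2$ on either side, and that the Hamiltonian isotopies have zero flux so the class $[i_v^*\lambda_v]$ on the enlarged vertex pieces still vanishes.
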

\begin{proof}
The local lifts of Theorem \ref{thm:trivconst} can be glued together by Lemma \ref{lem:gluelags}.
\end{proof}

\begin{thm}
\label{thm:cyclesimmlift}
Any tropical $1$-cycle admits an immersed, locally exact, graded Lagrangian lift.
\end{thm}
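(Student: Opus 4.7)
The plan is to reduce Theorem \ref{thm:cyclesimmlift} to Corollary \ref{cor:trivlift} by localising at each vertex via Lemma \ref{lem:gluelags} and resolving the local picture into a union of trivalent configurations. First I would invoke Lemma \ref{lem:gluelags} to reduce to constructing an immersed, locally exact, locally graded Lagrangian lift in a neighbourhood of each vertex $v$ of the tropical $1$-cycle $C$. Fix such a $v$ with incident edge-ends of primitive outgoing directions $u_i$ and integer weights $w_i$ satisfying $\sum_i w_i u_i = 0$.

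The core step is a two-stage local resolution of $v$. In the first stage, I would split each edge-end of weight $w_i$ into $|w_i|$ parallel edge-ends of weight $\mathrm{sign}(w_i)$; balancing is preserved. In the second stage, I would open up the (possibly high-valence) vertex into a small trivalent tree by iteratively pinching off pairs of edge-ends and recording new interior edges whose weight and direction are determined by balancing; whenever an interior edge has weight $w$ with $|w|>1$, it is further split into $|w|$ parallel copies of weight $\pm 1$. The output is a trivalent tree of balanced sub-vertices with all edge weights in $\{\pm 1\}$, whose leaves coincide with the original edge-ends of $v$.

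For each trivalent sub-vertex I would reverse the primitive directions of the weight-$(-1)$ edge-ends to obtain a genuine trivalent tropical curve (all weights $+1$), and apply Corollary \ref{cor:trivlift} to produce an embedded, locally exact, graded Lagrangian lift. Along each reversed edge I would then flip the orientation of the lift: this shifts the grading by $\pi$, so the phase $+\pi/2$ becomes $-\pi/2$, in accordance with the convention of Section \ref{Sec:gradings} for negative-weight edges. Gluing these local trivalent pieces across the new interior edges via the twist construction in the proof of Lemma \ref{lem:gluelags}, and then joining the result to the external edges of $C$, yields the required lift. The lift is generally only immersed, because the parallel copies introduced in stage one, together with the pinch-off edges of stage two, typically cross each other transversely inside the vertex neighbourhood.

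The main obstacle is the combinatorial second-stage resolution: one must show that every balanced configuration of unit-weighted edge-ends in $T_\Z B$ admits a resolution into a trivalent tree of balanced sub-vertices with all weights $\pm 1$, in such a way that the orientation and grading data at the interior edges of the tree match up between the two trivalent lifts meeting there. I would proceed by induction on the valence of $v$, opportunistically pairing edge-ends whose sum is a primitive lattice vector and splitting the resulting interior edge into parallel copies when it is not. Verifying that this process terminates, and that the local exactness and grading conditions of Definitions \ref{defn:locally_exact} and \ref{lem:locgrad} are preserved under the twist-region gluing along interior edges, is where I expect the main technical work to lie.
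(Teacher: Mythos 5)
Your overall strategy (localise at the vertices via Lemma \ref{lem:gluelags} and reduce to trivalent pieces) matches the paper's, but the step where you handle negative-weight edges does not work, and it is precisely there that the argument needs its one genuinely new geometric ingredient. Reversing the primitive direction of a weight-$(-1)$ edge-end does restore the ordinary balancing condition, but it also changes the underlying subset of $B$: the original negative edge is the ray $v+\R_{\ge 0}u$, whereas the reversed positive edge is the ray $v+\R_{\ge 0}(-u)$. The Lagrangian produced by Corollary \ref{cor:trivlift} for the reversed configuration therefore lives over the wrong rays and is not a lift of the original $1$-cycle. ``Flipping the orientation of the lift along the reversed edge'' cannot repair this: an orientation flip does not move the Lagrangian in $B$, and in any case the trivalent lift is a connected oriented surface (times a torus), so its orientation cannot be reversed over a single leg. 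What is needed is a Lagrangian that interpolates, over the \emph{same} ray, between a positively-oriented conormal branch of phase $+\pi/2$ and a negatively-oriented one of phase $-\pi/2$. The paper supplies exactly this via a bivalent vertex with one positive and one negative edge: its lift is a ``U-turn'' curve in the $\C^*$-factor transverse to the edge (bending the correct way for local gradedness and enclosing the correct area for local strong exactness), times the conormal to the edge direction. A negative edge at a trivalent vertex is then replaced by a short positive edge that doubles back and terminates in such a bivalent vertex inside the vertex neighbourhood, reducing to the all-positive case of Theorem \ref{thm:trivconst}; the higher-valence case is treated the same way.

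A secondary point: your combinatorial resolution into a trivalent tree with all weights $\pm 1$ is both unestablished (splitting a non-primitive interior edge into parallel unit-weight copies raises the valence of the adjacent sub-vertices, so termination of that induction is unclear) and unnecessary. The paper's strong induction simply splits a vertex of valency $\ge 4$ into two vertices of strictly smaller valency joined by one compact edge of whatever weight balancing dictates, with a bivalent opposite-sign vertex inserted at its midpoint; trivalent vertices with arbitrary positive weights are already covered by Theorem \ref{thm:trivconst} via covers of the standard pair of pants, so no reduction to unit weights is required.
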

\begin{proof}
As before it suffices by Lemma \ref{lem:gluelags} to consider the local case, so we assume that our tropical $1$-cycle has a single vertex. 
The proof for a single vertex is by strong induction on the valency of the vertex. 
The base case is when the valency is three. 
If all three edges have the same sign, then the lift exists by Theorem \ref{thm:trivconst}. 
If the edges have different signs, we must do something different.

Suppose, without loss of generality, that two edges are positive and one negative.   
Let $h: (C,\epsilon) \to B$ be a mixed-sign parametrization of a neighbourhood of the vertex. 
We modify $(C,\epsilon)$ by introducing an additional bivalent vertex a finite distance along the negative edge, and declaring the newly-created finite edge to have positive weight. 
Call the new mixed-sign abstract tropical curve $(C',\epsilon')$. 
There is a corresponding mixed-sign parametrized tropical curve $h':(C',\epsilon') \to B$ which coincides with $h$ outside a compact region (see Figure \ref{Fig:Immersed_lift}). 
It has one trivalent vertex with all three edges positive, and one bivalent vertex with one positive and one negative edge. 
The trivalent vertex admits a locally strongly exact locally graded Lagrangian lift by Theorem \ref{thm:trivconst}, and the bivalent edge admits a locally strongly exact locally graded Lagrangian lift as illustrated in Figure \ref{Fig:Immersed_lift}. 
The two pieces combine (as in Lemma \ref{lem:gluelags}) to give an immersed Lagrangian lift of a neighbourhood of the vertex, and one easily sees that it is locally strongly exact and locally graded.
This completes the base case of the induction.

\begin{figure}
\begin{center}
\includegraphics[width=0.25\textwidth]{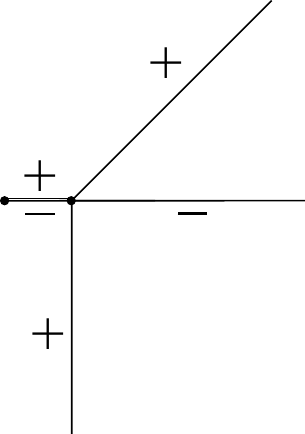}
\hspace{2cm}\raisebox{1cm}{\includegraphics[width=0.3\textwidth]{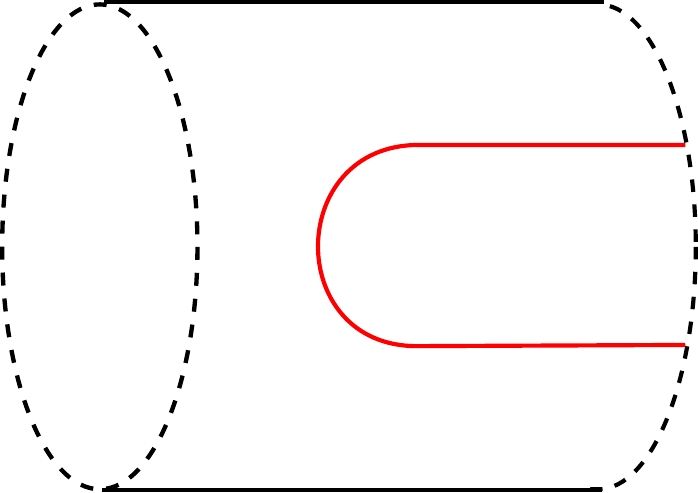}}
\end{center}
\caption{The picture on the left shows how to add a bivalent vertex to remove a mixed-sign trivalent vertex. The picture on the right shows a Lagrangian lift of a bivalent vertex. Note that the curve must bend in the right direction to be locally graded, and it must enclose the right amount of area to be locally strongly exact.}
\label{Fig:Immersed_lift}
\end{figure}

Now suppose that the valency is $\ge 4$, and $h: (C,\epsilon) \to B$ is a mixed-sign parametrization of a neighbourhood of the vertex. 
We modify $(C,\epsilon)$ by introducing an additional compact edge separating two vertices of valency $\ge 3$, then adding a bivalent vertex in the centre of that compact edge, assigning the same finite length and opposite signs to the two resulting halves. 
Call the new mixed-sign abstract tropical curve $(C',\epsilon')$, and let $h': (C',\epsilon') \to B$ be the corresponding mixed-sign parametrized tropical curve as before (the balancing condition determines the direction of the new compact edges uniquely). 
By strong induction, the two vertices of valency $\ge 3$ admit immersed locally strongly exact locally graded Lagrangian lifts; and the bivalent vertex also admits a locally strongly exact locally graded Lagrangian lift as before. 
The three pieces combine as before to give a locally strongly exact (in particular, locally exact), locally graded immersed Lagrangian lift of a neighbourhood of the vertex.   
\end{proof}
%
%\begin{rmk}
%Theorem \ref{thm:cyclesimmlift} implies that any tropical $1$-cycle admits an embedded Lagrangian lift: one can perturb the immersion so that its self-intersections are transverse, then do Lagrangian surgery \cite{Polterovich} at each self-intersection. 
%However the surgery may destroy local exactness and local gradedness.
%\end{rmk}

\subsection{Embedded formal Lagrangian lifts of arbitrary tropical $1$-cycles}

We recall the notion of a \emph{formal Lagrangian} in a symplectic manifold, which is `the closest thing to a Lagrangian that we can study using smooth topology':

\begin{defn}
Given a symplectic manifold $(M,\omega)$, a \emph{formal Lagrangian immersion} is an immersion of a half-dimensional submanifold $i: L \to M$ such that $i^*[\omega] = 0$ in $H^2(L;\R)$, together with a homotopy-through-half-dimensional-subspaces from $TL \subset i^* TM$ to a family of Lagrangian subspaces of $i^* TM$. 
\end{defn}

\begin{thm}
\label{thm:formlift}
Any tropical $1$-cycle in a tropical affine manifold of dimension $\ge 3$ admits an embedded formal Lagrangian lift (which is furthermore graded).
\end{thm}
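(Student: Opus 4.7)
The plan is to start from the immersed, graded Lagrangian lift supplied by Theorem \ref{thm:cyclesimmlift}---which is tautologically a graded formal Lagrangian lift, via the constant homotopy of tangent planes---and then remove its self-intersections by smooth isotopies using the extra room available when $n = \dim B \ge 3$.

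First I would perturb the immersion $i : L \to X(B)$ generically in the smooth category so that its self-intersections form a discrete set of transverse double points; the formal Lagrangian homotopy of tangent planes, and its lift to the Maslov cover encoding the grading, both extend across such a perturbation by continuity and openness of the formal condition. Then, since $\dim L = n \ge 3$, the classical Whitney trick applies: pairs of double points of opposite sign can be cancelled by a regular homotopy of $i$ supported in an arbitrarily small neighborhood, and the formal and graded data are carried along the regular homotopy. By contrast, when $n = 2$ transverse self-intersections have codimension $2$ in $L$ and cannot be smoothly cancelled, which is why in that dimension one is forced to perform Lagrangian surgery, thereby destroying gradedness.

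The main obstacle is arranging that the signed count of double points vanishes so that all of them can be removed. To this end I would refine the construction of Theorem \ref{thm:cyclesimmlift} vertex-by-vertex, in the spirit of Lemma \ref{lem:gluelags}, and produce the embedded formal Lagrangian lift directly over a neighborhood of each vertex. In the local chart at a vertex, the $k$ conormal tori $T^{n-1}$ over the edges (counted with the appropriate multiplicities) sit in a ball of dimension $2n \ge 6$, and by a codimension count they are pairwise unknotted and unlinked. Their disjoint union bounds an embedded oriented $n$-manifold inside the ball, built by attaching a sequence of trivial $1$-handles respecting the balancing condition (which is where $n \ge 3$ is used, to ensure the handles do not interfere with one another). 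The formal Lagrangian and graded data are specified on an auxiliary local Lagrangian model---where they are tautological---and transported across a smooth isotopy to the embedding constructed above. Gluing along the edges via Lemma \ref{lem:gluelags} assembles these local embedded formal Lagrangian lifts into a global one.
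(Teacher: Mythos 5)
Your starting point (the immersed graded lift of Theorem \ref{thm:cyclesimmlift}, which is tautologically formal) matches the paper, but the way you propose to remove the self-intersections has a genuine gap, rooted in a misdiagnosis of where those self-intersections live. In the construction of Theorem \ref{thm:cyclesimmlift}, \emph{each local vertex model is already an embedded Lagrangian} (trivalent same-sign vertices by Theorem \ref{thm:trivconst}, bivalent mixed-sign vertices by the explicit picture); the global lift is immersed only because the auxiliary mixed-sign parametrized curve $h':(C',\epsilon')\to B$ is immersed, i.e.\ its edges cross one another in $B$ away from the vertices. Consequently your vertex-by-vertex handle-attachment construction addresses a non-problem and leaves the actual double points (lying over edge crossings, where two conormals meet cleanly along an $(n-2)$-torus) untouched. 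Moreover, the handle construction itself is not justified: an embedded oriented $n$-manifold bounding the conormal tori built from abstract $1$-handles need not admit a formal Lagrangian structure (the homotopy of $TL$ to Lagrangian subspaces is a nontrivial condition on the Gauss map, and $i^*[\omega]=0$ must also be checked), and it is not shown to be smoothly isotopic to the Lagrangian model from which you propose to transport that structure.

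Your global fallback, the Whitney trick, is also not closed: cancelling the double points of an immersion $L^n\looparrowright X(B)^{2n}$ requires the Wall self-intersection invariant to vanish in $\Z[\pi_1]$ (with the appropriate involution), not merely the signed count in $\Z$, and $X(B)$ is very far from simply connected; you identify this as ``the main obstacle'' but the proposed resolution does not compute or kill it. The paper's route is much more elementary and sidesteps all of this: since $\dim C'=1$ and $\dim B\ge 3$, a generic perturbation of the map $h'$ itself (allowed to be non-affine on edge interiors) is an \emph{embedding} of the underlying graph into $B$; one then moves the base directions of the lift in accordance with this perturbation while keeping the fibre directions fixed, so the lift becomes embedded because the local pieces were already embedded and now sit over disjoint regions. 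The hypothesis $\dim B\ge 3$ enters only through $2\cdot\dim C'<\dim B$, not through the Whitney trick. I would redo your argument along these lines.
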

\begin{proof}
Given a tropical $1$-cycle, the proof of Theorem \ref{thm:cyclesimmlift} provides a graded immersed Lagrangian lift $i:L \to X(B)$. We show that $i$ can be perturbed to an embedding: since the property of being a graded formal Lagrangian persists under perturbations, the result follows.

Recall that the construction of the immersed Lagrangian lift proceeds by constructing an immersed mixed-sign parametrized tropical curve $h':(C',\epsilon') \to B$ which had only trivalent vertices all of whose edges had the same sign, and bivalent vertices with edges of opposite sign. 
Each of these vertex types admits a local embedded Lagrangian lift, so the fact that the global lift is immersed is due entirely to the fact that $h'$ is immersed. 
The crucial additional observation now is that, because $C'$ is one-dimensional while the ambient space has dimension $\ge 3$, the map $h'$ can be perturbed so that it becomes an embedding (except in a neighbourhood of the bivalent vertices), at the cost that it is no longer affine along the interiors of the edges. 
We then construct the desired perturbation of $i$ by keeping the fibre components fixed and moving the base in accordance with the perturbation of $h'$.
\end{proof}

%\begin{rmk}\label{rmk:whichverts}
%It is an interesting question to ask which tropical $1$-cycles admit (graded, locally exact) embedded Lagrangian lifts. 
%Of course this reduces, via Lemma \ref{lem:gluelags}, to the question of which tropical vertices admit such lifts. 
%Theorem \ref{thm:formlift} shows that there are no `classical' obstructions to lifting. 
%In Section \ref{Sec:vertices_revisited} we will show, using Floer theory, that if a tropical vertex  with edge weights $\pm1$ can admit a locally exact lift, then the same vertex in which all weights are taken to be positive is necessarily also balanced: the vertex can be ``made effective", which is an obstruction to liftability. 
%Corollary \ref{cor:trivlift} shows that effective \emph{trivalent} vertices always admit such lifts. 
%However this is certainly not the end of the story, as \cite[Example 5.5]{Matessi2018}, \cite[Theorem 3.1]{Mikhalkin2018} give examples of higher-valence vertices which admit embedded lifts. 
%\end{rmk}

\subsection{Consequences for cobordisms}

Let $B$ be a tropical affine manifold. 
There is an obvious map from the free abelian group $\Z^B$ on points of $B$ to the free abelian group on Lagrangian torus fibres of $X(B) \to B$, which induces an identification between the group of degree zero tropical $0$-cycles and the subgroup of homologically trivial configurations of Lagrangian torus fibres.  Let $\Cob_{\fib}(X(B)/\C^*)$ denote the group which is generated by the Lagrangian torus fibres $F_b \subset X(B)$, equipped with their natural brane structures, modulo relations induced by graded Lagrangian brane cobordisms all of whose ends are fibres.  There is a natural map
\[
\Cob_{\fib}(X(B)/\C^*) \to  \langle [F_b] \, | \, b \in B \rangle\, \subset \, \Cob(X(B)/\C^*)
\]
onto the subgroup of the cylindrical cobordism group which is generated by the fibres;  this map is onto, but not necessarily an isomorphism (since there may be cobordism relations amongst fibres which necessarily involve non-fibres). There is a map
$deg: \Cob_{\fib}(X(B)/\C^*) \to \Z$ recording the total homology class of a configuration of fibres, whose kernel we denote by $\Coh_{\fib}(X(B)/\C^*)_{\hom}$. Corollary \ref{cor:trivlift} shows that the association $b \mapsto F_b$ induces a (surjective) map
\begin{equation}
\label{eq:tropchowcob}
\CH^{\tri}_0(B)_{\hom} \longrightarrow \Cob_{\fib}(X(B)/\C^*)_{\hom}
\end{equation}
since any effective tropical linear equivalence which is realised by a trivalent tropical curve in $\R\times B$  lifts to an embedded graded Lagrangian submanifold of $X(\R\times B) = \C^*\times X(B)$ which, by construction, yields a cobordism between the corresponding fibres.

\begin{cor}[= Theorem \ref{thm:divisible}]
\label{cor:divisible}
Suppose $B$ is a tropical affine torus which admits a polarization. Then the group $\Cob_{\fib}(X(B)/\C^*)_{\hom}$ is divisible.
\end{cor}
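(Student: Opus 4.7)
The strategy is to combine two ingredients that have already been assembled in the paper: divisibility on the tropical side (Proposition \ref{prop:chowdivtri}) and a surjective comparison map from the trivalent tropical Chow group to the cobordism group. The point is that a quotient of a divisible abelian group is divisible, so once we have a surjection from a divisible group onto $\Cob_{\fib}(X(B)/\C^*)_{\hom}$, the conclusion is immediate.

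First I would recall that by Proposition \ref{prop:chowdivtri}, since $B$ is a polarized tropical affine torus, the group $\CH^{\tri}_0(B)_{\hom}$ is divisible. Next I would invoke the comparison map
\[
\Phi\colon \CH^{\tri}_0(B)_{\hom} \longrightarrow \Cob_{\fib}(X(B)/\C^*)_{\hom}
\]
constructed in the paragraph preceding the corollary: on generators it sends $[b]\mapsto [F_b]$, and the fact that it descends to trivalent linear equivalence is precisely Corollary \ref{cor:trivlift}, which converts a trivalent tropical curve in $\R \times B$ realizing an effective linear equivalence between the tropical $0$-cycles $\sum [b_i^-]$ and $\sum [b_i^+]$ into an embedded, graded Lagrangian lift in $X(\R\times B) = \C^*\times X(B)$ whose ends are exactly the fibres $F_{b_i^\pm}$, i.e.\ a cylindrical Lagrangian brane cobordism between the corresponding tuples of fibres.

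The map $\Phi$ is surjective: every class in $\Cob_{\fib}(X(B)/\C^*)_{\hom}$ is represented by a sum of homologically trivial configurations of fibres $\sum ([F_{b_i^+}] - [F_{b_i^-}])$, which is the image of $\sum ([b_i^+] - [b_i^-])\in \CH^{\tri}_0(B)_{\hom}$. Given any $\alpha \in \Cob_{\fib}(X(B)/\C^*)_{\hom}$ and any positive integer $n$, we lift to $\tilde\alpha \in \CH^{\tri}_0(B)_{\hom}$ with $\Phi(\tilde\alpha)=\alpha$; by divisibility of $\CH^{\tri}_0(B)_{\hom}$ we find $\tilde\beta$ with $n\cdot \tilde\beta = \tilde\alpha$; then $\beta:=\Phi(\tilde\beta)$ satisfies $n\cdot\beta = \alpha$, which is exactly divisibility of $\Cob_{\fib}(X(B)/\C^*)_{\hom}$.

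There is no real obstacle remaining at this stage, since all the genuinely non-trivial work has been done earlier: the existence of polarizations forces enough tropical curves to make the trivalent Chow group divisible (Lemmas \ref{lem:regdiv}, \ref{lem:somecurvesexist}, \ref{lem:curvesexist} and Proposition \ref{prop:chowdivtri}), and the symplectic construction of embedded, locally exact, graded Lagrangian lifts of trivalent tropical curves (Corollary \ref{cor:trivlift}, relying on the local model of \cite{Matessi2018,Mikhalkin2018}) ensures that every such tropical linear equivalence is realised by an honest Lagrangian brane cobordism. The only thing to be vigilant about is that the lifts produced by Corollary \ref{cor:trivlift} carry the \emph{natural} brane structure on each end fibre $F_b$, which follows from the local gradedness construction in Section \ref{Sec:gradings}, so that the resulting relation indeed holds in $\Cob_{\fib}(X(B)/\C^*)_{\hom}$ as defined (rather than in some twisted version).
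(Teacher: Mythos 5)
Your proposal is correct and follows exactly the paper's route: the paper establishes the surjection $\CH^{\tri}_0(B)_{\hom} \twoheadrightarrow \Cob_{\fib}(X(B)/\C^*)_{\hom}$ via Corollary \ref{cor:trivlift} in the paragraph preceding the corollary, and then deduces divisibility immediately from Proposition \ref{prop:chowdivtri}, just as you do.
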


\begin{proof} This is an immediate consequence of Proposition \ref{prop:chowdivtri} and the existence of the map \eqref{eq:tropchowcob}.
\end{proof}

\section{From cobordisms to curves}

\subsection{Rigid analytic spaces}

We briefly recall the notion of a rigid analytic space in the sense of Tate, following \cite{BGR,Einsiedler2006}.

Let $\Bbbk$ be an algebraically closed field of characteristic $2$. 
Consider the (universal one-variable) Novikov field over $\Bbbk$:
\begin{equation} \Lambda := \left\{ \sum_{j=0}^\infty c_j \cdot q^{\lambda_j}: c_j \in \Bbbk, \lambda_j \in \R, \lim_{j \to \infty} \lambda_j = +\infty\right\},\end{equation}
which is an algebraically closed field extension of $\Bbbk$. 
It has a non-Archimedean valuation
\begin{align} 
val: \Lambda & \to \R \cup \{\infty\} \\
\label{eqn:valdef} val\left(\sum_{j=0}^\infty c_j \cdot q^{\lambda_j}\right) &= \min_j\{\lambda_j: c_j \neq 0\}, \quad val(0) = \infty,
\end{align}
which gives rise to a norm $|\cdot|: \Lambda \to \R$ via $|x| = e^{-val(x)}$. The unitary subgroup 
\begin{equation} \label{eqn:unitary_novikov}
 U_{\Lambda} = val^{-1}(0) = \left \{ a + \sum_j c_j \cdot q^{\lambda_j} \, \in \Lambda \, : \, a \in \Bbbk^*, \lambda_j > 0 \right\}
\end{equation}
comprises the elements of valuation zero. 

We define the Gauss norm on the polynomial algebra $\Lambda[z_1,\ldots,z_n]$: 
\begin{equation}
\left| \sum_a f_a \cdot z^a \right| := \max_a \left\{|f_a|\right\},
\end{equation}
where $a \in \Z_{\ge 0}^n$ denotes a multi-index and $f_a \in \Lambda$. 
The completion of the polynomial algebra with respect to the Gauss norm is the Tate algebra
\[
\Lambda\langle z_1,\ldots,z_n\rangle = \left\{ \sum_a \, f_a \cdot z^a \ : \ val(f_a) \to \infty \ \textrm{as} \ \|a\| \to \infty \right\}
\]
where $\|\cdot \|$ is any norm on $\Z^n$. It is a $\Lambda$-Banach algebra, i.e. a complete non-Archimedean normed $\Lambda$-algebra, and it is Noetherian \cite[5.2.6]{BGR}. 
It can be regarded as a function algebra on the rigid analytic unit polydisc
\[B^n(\Lambda) = \{(z_1,\ldots,z_n) \in \Lambda^n \, : \, |z_i| \leq 1 \, \forall \, i\},\]
and in fact its maximal spectrum can be identified with $B^n(\Lambda)$ \cite[7.1.1]{BGR}.

An affinoid algebra $A$ is a $\Lambda$-Banach algebra which is isomorphic to a quotient of some Tate algebra. 
The maximal spectrum of an affinoid algebra comes equipped with a Grothendieck topology and a sheaf of rings $\mathcal{O}$ whose global sections recover $A$ by the Tate acyclicity theorem. 
A rigid analytic space over $\Lambda$ consists of a set $Y$ equipped with a Grothendieck topology and a sheaf of rings $\mathcal{O}_Y$, which is locally isomorphic to the maximal spectrum of an affinoid algebra. 
 
\subsection{The non-Archimedean SYZ fibration}
\label{sec:nonArchSYZ}

Now let $B$ be a tropical affine manifold. 
The mirror to the Lagrangian torus fibration $p:X(B) \to B$ is a rigid analytic space $Y(B)$, equipped with a map (of sets) $\trop: Y(B) \to B$, which is called the non-Archimedean SYZ fibration.\footnote{The terminology is from \cite{Nicaise2018}. We remark that all of our non-Archimedean SYZ fibrations have non-singular fibres, so are in fact affinoid torus fibrations in the language of \emph{op. cit.}} 
We describe the construction following \cite{Kontsevich2001,Kontsevich2006,Abouzaid:ICM,Abouzaid:FFFF}.

As a set, 
\[Y(B) := \bigsqcup_{b\in B} H^1(F_b; U_{\Lambda}) \]
is the set of $U_\Lambda$-local systems on fibres $F_b$ of $p: X(B) \to B$. 
It can be identified with the total space of the local system $T_\Z B \otimes_\Z U_\Lambda$ over $B$. 
It comes equipped with a natural map $\trop: Y(B) \to B$.
In order to equip $Y(B)$ with the structure of a rigid analytic space we must cover it with maximal spectra of affinoid algebras in an appropriate way. 

The construction is locally modelled on the case $B = \R^n$, where $Y(B) = \mathbb{G}_m^n$ and the map $\trop: Y(B) \to B$ is defined by
\[\trop(z_1,\ldots,z_n) = (val(z_1),\ldots,val(z_n)). \]
Note that the total space of $T_\Z B \otimes_\Z U_\Lambda \simeq \R^n \times U_\Lambda^n$ is in bijection with $Y(B)$ via the map
\[((\lambda_1,\ldots,\lambda_n),(u_1,\ldots,u_n))  \mapsto \left(q^{\lambda_1}\cdot u_1,\ldots,q^{\lambda_n}\cdot u_n \right).\]
We also note that this construction is functorial with respect to affine transformations of $\R^n$, in particular for any affine transformation $\varphi(x) = Ax +b$ there is a commutative diagram
\begin{equation}
\xymatrix{ \mathbb{G}_m^n \ar[r] \ar[d]^-{\trop} & \mathbb{G}_m^n \ar[d]^-{\trop} \\
\R^n \ar[r]^-{\varphi} & \R^n}
\end{equation}
with the top arrow sending 
\[(z_1,\ldots,z_n) \mapsto \left(q^{b_1}\cdot z^{A_1},\ldots,q^{b_n} \cdot z^{A_n} \right),\]
where $A_i$ denotes the $i$th column of the matrix $A \in \GL(n,\Z)$.

Now if $P \subset \R^n$ is a bounded rational polytope then we define a norm on the algebra of Laurent polynomials $\Lambda[z_1^\pm,\ldots,z_n^\pm]$ corresponding to the valuation
\begin{equation}
v_P\left( \sum_a f_a \cdot z^a \right) := \inf \left\{val(f_a) + a \cdot p: \, a \in \Z^n, p \in P\right\}.
\end{equation}
The completion of the algebra of Laurent polynomials with respect to this norm is the polytope algebra
\[ \mathcal{O}_P := \left\{ \sum_a\, f_a \cdot z^a \ : \ val(f_a) + a \cdot p \to \infty \ \textrm{as} \ \|a\| \to \infty \text{ for all $p \in P$} \right\},\]
where now $a \in \Z^n$. 
This is an affinoid algebra \cite[Proposition 3.1.5]{Einsiedler2006} and in particular Noetherian, and it can be regarded as a function algebra on the polytope domain $Y_P := \trop^{-1}(P)$. 
In fact the maximal spectrum of $\mathcal{O}_P$ can be identified with $Y_P$ \cite[Proposition 3.1.8]{Einsiedler2006}. 

It follows that for any polytope $P \subset B$, the subset $\trop^{-1}(P) \subset Y(B)$ is naturally in bijection with the maximal spectrum of an affinoid algebra $\mathcal{O}_P$. 
Taking a cover of $B$ by polytopes we obtain an admissible affinoid cover, which completes the construction of the rigid analytic space $Y(B)$. 

 \subsection{Tropicalization} An analytic subset%\footnote{An analytic subset is slightly more general than a subvariety: its sheaf of analytic functions may have nilpotents.}  
 of a rigid analytic space $Y$ is, by definition, the zero-locus of a coherent ideal sheaf, cf. \cite[9.5.2]{BGR} or \cite{Conrad}.  These are precisely the reduced subspaces $Z \subset Y$ which are locally, i.e. on the sets of an admissible affinoid cover, given as the zero-set of a finite collection of convergent power series.  Such an analytic subset has a well-defined dimension (at a point this is the Krull dimension of the local ring $\mathcal{O}_{Z,z}$, and globally one takes the supremum of the local dimensions). 
Locally there is a well-defined notion of irreducible components of an analytic subset, and this notion globalizes by the following theorem of Conrad \cite{Conrad}:
 
 \begin{thm}[Conrad]\label{thm:conrad}
An analytic subset of a rigid analytic space $Y$ admits a unique decomposition into irreducible analytic subsets.
\end{thm}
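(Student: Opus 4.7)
The plan is to prove uniqueness first (which is essentially formal) and then concentrate effort on existence, which is where all the geometric content lies. For uniqueness, suppose $Z = \bigcup_i Z_i = \bigcup_j W_j$ are two decompositions into irreducible analytic subvarieties. Each $Z_i = \bigcup_j (Z_i \cap W_j)$ expresses the irreducible $Z_i$ as a finite union of closed analytic subvarieties, so irreducibility forces $Z_i \subseteq W_{j(i)}$ for some $j(i)$. Running the same argument symmetrically gives $W_{j(i)} \subseteq Z_{i'}$ for some $i'$, and the chain $Z_i \subseteq Z_{i'}$ together with the assumption that neither $Z_i$ nor $Z_{i'}$ is contained in another component of its family forces $Z_i = W_{j(i)}$. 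So the two decompositions must coincide up to reindexing.

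For existence, I would first handle the affinoid case. Let $Z \subseteq \mathrm{Sp}(A)$ be cut out by a reduced ideal $I \subset A$; then $A/I$ is a reduced Noetherian ring, hence has finitely many minimal primes $\mathfrak{p}_1,\dots,\mathfrak{p}_k$, and the corresponding $V(\mathfrak{p}_\alpha) \subseteq \mathrm{Sp}(A)$ give a decomposition into finitely many irreducible analytic subvarieties. Noetherianity of affinoid algebras (a theorem of Tate) is essential here, and it gives both finiteness and the fact that any descending chain of closed analytic subvarieties eventually stabilizes.

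The hard part, and the main obstacle, is globalizing this local decomposition across an admissible affinoid cover $\{\mathrm{Sp}(A_\lambda)\}$ of $Y$. The difficulty is that the Grothendieck topology on a rigid analytic space is coarse enough that an irreducible component defined on one affinoid chart could either further decompose, or glue to a piece of a neighboring chart, once passed through a change of cover. My plan would be to follow Conrad's route: show first that affinoid subdomain inclusions $\mathrm{Sp}(B) \hookrightarrow \mathrm{Sp}(A)$ are flat, and then that the induced map on reduced rings $A_{\mathrm{red}} \to B_{\mathrm{red}}$ sends minimal primes of $A_{\mathrm{red}}$ to intersections of minimal primes of $B_{\mathrm{red}}$ in a controlled way. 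One then constructs the \emph{normalization} $\pi : \widetilde Z \to Z$ of the reduced analytic subvariety $Z$; because normalization of an affinoid is a finite disjoint union of normal affinoids, $\widetilde Z$ exists globally and decomposes canonically into its connected components $\widetilde Z = \bigsqcup_\alpha \widetilde Z_\alpha$. Defining $Z_\alpha := \pi(\widetilde Z_\alpha)$ would give the desired global irreducible components, and one would verify that each $Z_\alpha$ is irreducible (any further decomposition would pull back nontrivially to the connected normal piece $\widetilde Z_\alpha$, which is impossible) and that their union is $Z$.

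The real technical crux is establishing that the normalization map exists globally and that connected components of $\widetilde Z$ are well-defined in the rigid analytic setting; this in turn rests on excellence of affinoid algebras and on the fact that minimal primes are stable under the class of ring maps coming from affinoid subdomain inclusions. Everything else — uniqueness, finiteness on each chart, and definition of irreducibility — is either formal or a direct consequence of the affinoid case.
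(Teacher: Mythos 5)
This theorem is not proved in the paper at all --- it is quoted verbatim from Conrad's \emph{Irreducible components of rigid spaces} and used as a black box. Your sketch reproduces exactly Conrad's strategy (reduce to the affinoid case via Noetherianity of Tate algebras, then globalize by constructing the normalization $\pi:\widetilde Z\to Z$ using excellence of affinoid algebras and compatibility of normalization with affinoid subdomain inclusions, and define the irreducible components as images of the connected components of $\widetilde Z$), so you have correctly identified the argument behind the cited result; the genuinely hard inputs you flag but do not carry out (excellence, due to Kiehl, and the behaviour of minimal primes under affinoid localization) are precisely the content of Conrad's paper. One small caveat: your uniqueness argument tacitly assumes the decompositions are finite, whereas for non-quasi-compact rigid spaces they are only locally finite, so the containment argument should be run on a quasi-compact admissible open before being propagated globally.
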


For the rest of this section, let $B$ be a tropical affine manifold and $\trop: Y(B) \to B$ the corresponding non-Archimedean SYZ fibration as constructed in the previous section.

\begin{defn} If $Z \subset Y(B)$ is an analytic subset, then $\trop(Z) \subset B$ is the \emph{tropicalization} of $Z$.
\end{defn}

If $Z$ is irreducible of dimension $k$, the tropicalization $\trop(Z)$ is expected to carry the structure of a $k$-dimensional tropical subvariety. 
A proof of this result has not appeared in the literature, so we summarize here what is known. 
 
Absent the balancing condition, versions of the result date back to \cite{Bieri1984,Einsiedler2006}. 
A proof in the present setting follows from \cite[Corollary 6.2.2]{Berkovich2004} (see  \cite[Proposition 5.4]{Gubler}):

\begin{thm}\label{thm:gubler}
If $Z \subset Y(B)$ is an irreducible analytic subset of dimension $k$, the tropicalization $\trop(Z) \subset B$ is a locally finite rational polyhedral complex, pure of dimension $k$. 
\end{thm}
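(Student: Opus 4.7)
The claim is local on $B$, so the first move is to reduce to the affinoid-local case. Since tropical affine charts embed neighborhoods of $B$ into $\R^n$, and the construction of $Y(B)$ is functorial for such charts, I may assume without loss of generality that $B$ is an open subset of $\R^n$ and $Y(B) \subset \mathbb{G}_m^n$ with $\trop$ the coordinate-wise valuation. Cover $B$ by bounded rational polytopes $P$, so that $Y_P = \trop^{-1}(P)$ is the maximal spectrum of the polytope algebra $\mathcal{O}_P$, hence affinoid. Because $B$ is a (second-countable) manifold we can arrange that the cover be locally finite, which will be the source of local finiteness for $\trop(Z)$ in the end.

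Next, I fix such a polytope $P$ and analyze $Z_P := Z \cap Y_P$. This is a closed analytic subvariety of the affinoid $Y_P$ cut out by a finitely generated ideal. By Conrad's decomposition (\cref{thm:conrad}), $Z_P$ has finitely many irreducible components, each of dimension at most $k$; since $Z$ is globally irreducible of dimension $k$, and analytic subvarieties are locally equidimensional at every point in the interior of any irreducible component, each such component meeting the interior of $Y_P$ has dimension exactly $k$. I then invoke the non-Archimedean Bieri--Groves theorem in the affinoid form, namely \cite[Corollary 6.2.2]{Berkovich2004} or \cite[Proposition 5.4]{Gubler}, which asserts that the tropicalization of an irreducible closed analytic subvariety of an affinoid polytope domain is a rational polyhedral complex of the same dimension. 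Applied to each irreducible component of $Z_P$, this shows $\trop(Z_P)$ is a finite rational polyhedral complex, pure of dimension $k$.

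Finally, I glue: because tropicalization obviously commutes with the restriction $Y(B) \supset Y_P$, the local finite rational polyhedral complexes $\trop(Z_P)$ agree on overlaps and assemble into a locally finite rational polyhedral complex on $B$, which is pure of dimension $k$ since each piece is. The main technical obstacle is the affinoid Bieri--Groves statement itself -- showing that the image under $\trop$ of an irreducible closed analytic subvariety of a polytope domain is a polyhedral complex of the correct dimension, rather than merely a semianalytic subset -- and this is precisely the content of the Berkovich/Gubler results cited, so no new argument is required beyond carefully quoting them and checking compatibility with the gluing. A secondary point to verify is that purity is not destroyed at the boundary of $P$: this is addressed by shrinking $P$ slightly or passing to a refinement of the cover, so that every point of $\trop(Z)$ lies in the interior of some polytope in the cover.
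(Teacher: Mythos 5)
Your proposal is correct and matches the paper's treatment: the paper does not prove this theorem but simply records that it "follows from" the affinoid non-Archimedean Bieri--Groves results of Berkovich (Corollary 6.2.2) and Gubler (Proposition 5.4), which is exactly the key input you invoke, with the rest of your argument being the routine localization to polytope domains $Y_P$ and gluing. The only cosmetic point is that "components meeting the interior of $Y_P$" is an unnecessary hedge -- since $Z$ is irreducible it is equidimensional, so \emph{every} irreducible component of $Z \cap Y_P$ already has dimension exactly $k$.
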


A complete proof including weights and balancing can be found in \cite[Theorem 3.3.5]{Maclagan2007}, in the case that $B =\R^n$ and $Z \subset \mathbb{G}_m^n$ is algebraic. 
A complete proof in the case that $Z$ is a curve seems to be ``well-known to experts'', but has only appeared in the literature under certain additional technical hypotheses, cf. \cite{BPR,Yu2015} (in particular, the local case of a curve $Z \subset Y_P$ where $P$ is a simplex is addressed in \cite[Example 1.2]{Yu2015} under the assumption that the coefficient field is discretely valued). 
We shall make the following:

\begin{ass}
\label{ass:an_trop}
If $Z \subset Y(B)$ is an irreducible analytic subset of dimension $1$, the tropicalization $\trop(Z) \subset B$ carries the structure of a tropical curve.
\end{ass}

\begin{cor} 
Suppose that $Z\subset Y(B)$ is an analytic subset which meets a non-empty affinoid subdomain $Y_P = \trop^{-1}(P) \subset Y(B)$ in an analytic curve. Then $B$ contains a tropical curve. 
\end{cor}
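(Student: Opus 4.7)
The plan is to produce a one-dimensional irreducible analytic subvariety of $Y(B)$ by taking an appropriate irreducible component of $Z$, and then apply Theorem~\ref{thm:an_trop} directly.

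First, I would apply Conrad's theorem (Theorem~\ref{thm:conrad}) to decompose $Z$ uniquely into irreducible analytic subvarieties $Z = \bigcup_i Z_i$, each of pure dimension $d_i$. Intersecting with the admissible open $Y_P$ gives $Z \cap Y_P = \bigcup_i (Z_i \cap Y_P)$. Since $Z_i \cap Y_P$ is an admissible open subspace of the irreducible rigid analytic variety $Z_i$, it is either empty or irreducible of pure dimension $d_i$; this is the standard rigid analytic analogue of the fact that nonempty opens in irreducible schemes are irreducible (the local rings remain integral).

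Next, I would use the hypothesis that $Z \cap Y_P$ is an analytic curve, which I interpret as an analytic subvariety of $Y_P$ of dimension one (containing at least one component of dimension one). Applying Conrad's theorem to the affinoid $Y_P$, the irreducible components of $Z \cap Y_P$ are obtained from the nonempty $Z_i \cap Y_P$ by discarding those strictly contained in another. At least one irreducible component of $Z \cap Y_P$ has dimension one, so there exists an index $i$ with $Z_i \cap Y_P$ nonempty and of dimension one, whence $d_i = 1$. Thus $Z_i \subset Y(B)$ is an irreducible analytic subvariety of dimension one.

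Finally, Theorem~\ref{thm:an_trop} endows $\trop(Z_i) \subset B$ with the structure of a tropical curve, and since $Z_i \cap Y_P \neq \emptyset$ we have $\trop(Z_i) \cap P \neq \emptyset$, so this tropical curve is nonempty. The main (essentially bookkeeping) point is to justify that irreducibility of $Z_i$ is preserved by restricting to the affinoid subdomain $Y_P$, which is where the rigid analytic input---Conrad's irreducible decomposition applied on both $Y(B)$ and $Y_P$---is used; once this is in place the conclusion is immediate from Theorem~\ref{thm:an_trop}.
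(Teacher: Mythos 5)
Your overall route is the same as the paper's: use Conrad's decomposition (Theorem~\ref{thm:conrad}) to locate an irreducible component of $Z$ that is forced to be pure of dimension one because it meets $Y_P$ inside the curve $Z \cap Y_P$, and then apply Theorem~\ref{thm:an_trop}. However, one intermediate assertion you make is false: a non-empty admissible open of an irreducible rigid space need \emph{not} be irreducible, nor even connected --- for instance the disjoint union of two closed discs is an admissible open of the irreducible space $(\bP^1)^{an}$. Admissible opens behave like metric opens rather than Zariski opens, so the scheme-theoretic fact you invoke has no analogue here (and in any case irreducibility is not detected by integrality of local rings: a nodal curve is irreducible with a non-integral local ring, while two disjoint discs have integral local rings everywhere). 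Fortunately your argument does not actually need irreducibility of $Z_i \cap Y_P$. What it needs is: (i) some irreducible component $W$ of $Z \cap Y_P$ of dimension one is contained in a single $Z_i$ (an irreducible analytic set contained in a locally finite union of analytic sets lies in one of them), which gives $d_i \ge 1$; and (ii) a non-empty admissible open of an irreducible rigid space of dimension $d_i$ still has pure dimension $d_i$ (irreducible rigid spaces are equidimensional), so $Z_i \cap Y_P \subset Z \cap Y_P$ forces $d_i \le 1$. With that repair your argument closes and coincides with the paper's one-line proof: the component $Z_i$ is pure of dimension one and its tropicalization is the required tropical curve.
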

\begin{proof}
There is an irreducible component $C$ of $Z$ which contains one of the irreducible components of $Z\cap Y_P$, and is therefore pure of dimension one.  The tropicalization of $C$  is a tropical curve, by Assumption \ref{ass:an_trop}.
\end{proof}

\subsection{Floer theory for cobordisms}\label{Sec:floer_cob}

Let $(X,\omega)$ be a symplectic manifold equipped with an $\infty$-fold Maslov cover. An \emph{unobstructed} Lagrangian brane consists of a Lagrangian brane $L \subset X$ together with an $\omega$-tame almost-complex structure $J$ on $X$ for which $L$ bounds no $J$-holomorphic disc and does not meet any $J$-holomorphic sphere. Any Lagrangian torus fibre $F_b \subset X(B)$ can be equipped with the structure of an unobstructed Lagrangian brane: in fact any $\omega$-tame almost-complex structure $J$ works, since the map $H_1(F_b) \to H_1(X(B))$ is injective and $\omega_{can}$ has trivial integral over any 2-sphere.

An unobstructed cylindrical Lagrangian brane cobordism $V$ consists of a Lagrangian brane cobordism $V\subset \C^*\times X$ together with an almost-complex structure $J$ on $\C^* \times X$ for which $V$ bounds no $J$-holomorphic disc and does not meet any $J$-holomorphic sphere. 
We also require that outside a compact set, $J = J_{\C^*} \oplus J_z$ should decompose as a direct sum, where $J_z$ is a family of almost-complex structures on $X$ parametrized by $z \in \C^*$; and we furthermore require that $J_z$ should coincide with the almost-complex structure $J_i^\pm$ attached to $L_i^\pm$ over the `end' of $V$ modeled on $L_i^\pm$.

\begin{rmk}
\label{rmk:no_taut}
There are few techniques for ensuring unobstructedness of a given cobordism. 
It is for this reason that a more satisfactory treatment would replace the notion of `unobstructed' with `Floer-theoretically unobstructed' everywhere, which means replacing the additional data of almost-complex structures with the additional data of bounding cochains. 
\end{rmk}

\begin{lem}
\label{lem:cob_floer}
Let $(V,J_V)$ be an unobstructed Lagrangian brane cobordism, and $(K,J_K)$ a closed unobstructed Lagrangian brane, both living in $\C^* \times X$. 
Then there is a well-defined Floer cohomology group $HF^*((K,J_K),(V,J_V))$, which we will abbreviate by $HF^*(K,V)$. 

If $(V_t,J_{V_t})_{t \in [0,1]}$ is a one-parameter family of unobstructed Lagrangian brane cobordisms, and $(K_t,J_{K_t})_{t \in [0,1]}$ is a one-parameter family of closed unobstructed Lagrangian branes, such that $V_t$ and $K_t$ are transported  along compactly-supported Hamiltonian isotopies, then there is a continuation isomorphism $HF^*(K_0,V_0) \simeq HF^*(K_1,V_1)$.
\end{lem}
\begin{proof}[Sketch]
By applying a Hamiltonian isotopy to $V$ we may assume that $V$ and $K$ intersect transversely. 
The Floer cochain complex $CF^*(K,V)$ is generated by intersection points between $V$ and $K$. 
In order to define the differential we choose a one-parameter family of $\omega_{\C^*} \oplus \omega$-tame almost-complex structures $J_t$, interpolating between $J_0 = J_V$ and $J_1 = J_K$, and such that outside a compact set the projection map to $\C^*$ is $(J_t,i)$-holomorphic for all $t$. 
The differential then counts $J_t$-holomorphic strips, which means finite-energy solutions of
\begin{align*}
u: \R \times [0,1] & \to \C^* \times X\\
\partial_s u + J_t(u) \partial_t u & =0 \\
u\left(\R \times \{i\}\right) &\subset \left\{ \begin{array}{ll}
											 V, & i=0 \\
											K, & i=1 
										\end{array} \right.
\end{align*}
which are asymptotic to intersection points. 
Compactness of the relevant moduli spaces is achieved by considering the projection of these strips to $\C^*$ \cite[Section 4.2]{Biran-Cornea}, and ruling out disc and sphere bubbling using the unobstructedness assumption (cf. \cite{Seidel:Flux}). 
Continuation maps are constructed in the standard way.
\end{proof}

\subsection{Construction of an analytic subset}
\label{Sec:construct_an_sub}

Let $V \subset \C^*\times X(B)$ be an unobstructed cylindrical Lagrangian brane cobordism between $\bL^- = \{L_i^-\}$ and $\bL^+ = \{L_j^+\}$.   There is a Lagrangian $(n+1)$-torus fibration  
\[
\hat{p}: \C^*\times X(B) \to \R\times B
\]
with fibre over $\hat{b} = (r,b)$ the torus $F_{\hat{b}} = S^1_r \times F_b$, where we have chosen global co-ordinates $(r,\theta)$ on $\C^* = \R\times S^1$. 

The rigid analytic space mirror to this Lagrangian torus fibration is $\mathbb{G}_m \times Y(B)$, which comes equipped with the non-Archimedean SYZ fibration $\trop: \mathbb{G}_m \times Y(B) \to \R \times B$. 
The points $\hat{y} \in \mathbb{G}_m \times Y(B)$ parametrise pairs $F_{\hat{y}} := (F_{\hat{b}},\xi)$ where $F_{\hat{b}}$ is the fibre of $\hat{p}$ over $\hat{b} = \trop(\hat{y})$ and $\xi$ is a $U_\Lambda$-local system on it. 
There is a natural choice of Lagrangian brane structure on $F_{\hat{b}}$, and it is furthermore $H_1$-injective, so any choice of almost-complex structure turns it into an unobstructed Lagrangian brane.
Thus for any such $F_{\hat{y}}$ there is a well-defined $\Z$-graded Floer cohomology group $HF^*(F_{\hat{y}}, V)$, and it is independent of the almost-complex structure assigned to $F_{\hat{y}}$.

\begin{defn} The $i$th \emph{mirror support} $\scrE^i_V$ of $V$ is the subset of $\mathbb{G}_m \times Y(B)$ consisting of points $\hat{y}$  for which the group $HF^i(F_{\hat{y}}, V)$ is non-vanishing. 
The \emph{total mirror support} is $\scrE_V := \cup_i \scrE^i_V$.
\end{defn}

\begin{prop} \label{prop:support_is_analytic} 
The mirror support $\scrE^i_V$ is an analytic subset of $\mathbb{G}_m \times Y(B)$.
\end{prop}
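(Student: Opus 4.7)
The plan is to realise $\scrE_V$ locally as the support of the cohomology of a bounded complex of finite-rank free modules over the polytope algebra, following the family Floer strategy of Fukaya and Abouzaid. Cover $\R \times B$ by rational polytopes $P$ which are small enough that the restriction of $V$ to $\hat{p}^{-1}(P)$ is disjoint from the compact part of the cobordism (so $J_V$ is split there) and that $P$ is contractible. Work over a single such $P$ and its affinoid preimage $Y_P = \trop^{-1}(P) \subset \mathbb{G}_m \times Y(B)$, whose coordinate ring is the polytope algebra $\mathcal{O}_P$.

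Next, apply a compactly supported Hamiltonian perturbation to make $V$ transverse to every fibre $F_{\hat{b}}$, $\hat{b} \in P$: this is possible because $\hat{p}^{-1}(P)$ is a trivial torus bundle on which $V$ looks like a union of finitely many graphs of closed one-forms after our choice of neighbourhood. The intersection points $V \cap \hat{p}^{-1}(P)$ then form the image of finitely many disjoint sections $\sigma_1,\ldots,\sigma_k : P \to \hat{p}^{-1}(P)$, and these provide a basis of generators for a free $\mathcal{O}_P$-module $CF^\bullet$, graded by the brane structure on $V$. To define the differential, use Lemma \ref{lem:cob_floer}: pick a compatible family $J_t$ and count $J_t$-holomorphic strips $u$ between $\sigma_i$ and $\sigma_j$, each weighted by a monomial $q^{A_0(u)} \cdot z^{c(u)}$, where after fixing a primitive of $\omega_{\C^*}\oplus\omega$ on $\hat{p}^{-1}(P)$ (using that $P$ is contractible and the vanishing of $[\omega]$ on fibres), Stokes' theorem decomposes the area as $A_0(u) + \langle c(u),\hat{b}\rangle$ with $c(u) \in T_\Z^*(\R\times B)$ encoding the winding of $u$ against the fibre directions; the holonomy of the local system then produces exactly the monomial $z^{c(u)}$ in the coordinates on $\mathbb{G}_m \times Y(B)$. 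Gromov compactness for strips with bounded projection to $\C^*$, combined with the Biran--Cornea projection argument and the tautological unobstructedness of $V$ and of the fibres ruling out disc and sphere bubbles, yields the required non-Archimedean convergence: the entries of the differential lie in $\mathcal{O}_P$ because the valuations $A_0(u) + \langle c(u),\hat b\rangle$ of the contributing monomials tend to $+\infty$ uniformly on $P$.

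The result is a bounded complex $(CF^\bullet,d)$ of finite-rank free $\mathcal{O}_P$-modules whose specialization at any closed point $\hat{y} \in Y_P$ is $CF^\bullet(V,F_{\hat{y}})$ with its Floer differential, so $\hat{y} \in \scrE_V$ if and only if the specialized complex has nonzero cohomology. Since $\mathcal{O}_P$ is Noetherian, the support of the cohomology of such a complex is an analytic subvariety of $Y_P$, cut out locally by the Fitting ideals of the matrices representing the differentials. Independence of auxiliary choices and compatibility on overlaps $Y_P \cap Y_{P'}$ follow from the standard continuation map argument: different choices produce quasi-isomorphic complexes over the common affinoid, hence identical support loci. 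Gluing these local analytic subvarieties yields the coherent ideal sheaf whose vanishing locus is $\scrE_V$.

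The principal obstacle is the second step: verifying that the weighted counts of holomorphic strips define honest entries of $\mathcal{O}_P$. This requires combining the Biran--Cornea compactness for cobordism Floer theory (the projection to $\C^*$ controls non-compactness in the cylindrical direction) with the non-Archimedean convergence packaged into the polytope norm $v_P$, and it is at this step that the tautological unobstructedness hypothesis on $(V,J_V)$ is essential to rule out the disc and sphere bubbles that would otherwise invalidate the construction.
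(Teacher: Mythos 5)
Your overall strategy coincides with the paper's: localize over small polytopes $P$, identify the Floer cochain groups of $V$ against the fibres over $P$ as a fixed finite-rank free $\mathcal{O}_P$-module, show that the entries of the differential are convergent sums of monomials in the affinoid coordinates, and cut out $\scrE_V \cap Y_P$ by the vanishing of minors (equivalently, Fitting ideals) of the resulting matrix over the Noetherian algebra $\mathcal{O}_P$. The convergence argument via Gromov compactness, the use of the projection to $\C^*$ for compactness, and the exclusion of bubbling via tautological unobstructedness all match the paper's proof.

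However, your opening reduction is not available: one cannot cover $\R \times B$ by polytopes $P$ with $\hat{p}^{-1}(P)$ disjoint from the compact part of the cobordism, since the compact core of $V$ projects onto a compact subset of $\R \times B$ that any covering must meet; over that region $V$ is not a union of graphs of closed one-forms, and this is precisely where the proposition has content. Fortunately this hypothesis is unnecessary: transversality of $V$ to all fibres over a small $P$ follows from a Hamiltonian perturbation and the implicit function theorem, with no structural assumption on $V$. The step your write-up leaves implicit, and which carries the real weight, is the Fukaya trick: one fixes a reference fibre $F_{\hat{b}_0}$ and chooses a family of $C^2$-small diffeomorphisms $\phi_{\hat{b}}$ with $\phi_{\hat{b}_0} = \id$, $\phi_{\hat{b}}(F_{\hat{b}}) = F_{\hat{b}_0}$, preserving $V$, and with $(\phi_{\hat{b}})^* J_t$ still tame. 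This tautologically identifies the moduli spaces of strips for all $\hat{b} \in P$, which is what guarantees that the integer counts entering your monomial weights $q^{A_0(u)} \cdot z^{c(u)}$ are independent of $\hat{b}$, so that each matrix entry is a single convergent series in $\mathcal{O}_P$. The Stokes decomposition of the area as $A_0(u) + \langle c(u),\hat{b}\rangle$ alone does not rule out the counts jumping as $\hat{b}$ varies, so without this identification the claimed analyticity does not follow.
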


\begin{proof}
This is a consequence of results of Abouzaid  \cite{Abouzaid:ICM} and  Fukaya \cite{Fukaya:cyclic}.
 Fix $V$ and a point $\hat{y_0}$ corresponding to a fibre $F_{\hat{b}_0}$ equipped with unitary local system $\xi_0$. By a small Hamiltonian perturbation, we can assume that $V$ is transverse to the fibres $F_{\hat{b}}$ for $\hat{b}$ in a small polyhedral neighbourhood $P$ of $\hat{b}_0$. 
Thus the Floer cochain complexes $CF^*(F_{\hat{y}},V)$ are identified for all $\hat{y} \in Y_P$, and it is only the Floer differential that varies.  Fix a local section of the family $\{F_b\}$ over $P$, so that for each fibre $F_b$ we have a smoothly varying base-point $\star \in F_{b}$. In $F_{b_0}$, fix a homotopy class of reference path $\gamma_x$ from $x$ to $\star$, for each $x \in V\pitchfork F_{b_0}$; this defines corresponding homotopy classes for each $b\in P$.  

The Floer differential has the shape
\begin{equation} \label{eqn:differential}
m^1(y) \  = \  \bigoplus_x  \sum_{u \in \scrM^{\hat{b}}(x,y)}  q^{E(u)} z^{[\partial u]} \cdot x
\end{equation}
where $x, y$ are intersection points of $F_{\hat{b}}$ and $V$ (which vary constantly over $P$), $\scrM^{\hat{b}}(x,y)$ denotes the set of rigid Floer trajectories, $E(u) = \int u^*\omega$ is the energy of a solution, and $z^{[\partial u]}$ is the monodromy of the local system $\xi$ associated to $\hat{y}$ around a loop $[\partial u]$ obtained from the two boundary arcs of the Floer trajectory $u$ concatenated with the fixed reference paths.

Fix a 1-parameter family of almost complex structures $J_t$ which are appropriate for defining the Floer cohomology group $HF^*(F_{\hat{y}_0}, V)$ as in Lemma \ref{lem:cob_floer}. Following \cite{Fukaya:cyclic}, after shrinking $P$ we may pick a family of $C^2$-small diffeomorphisms $\{\phi_{\hat{b}}\}_{\hat{b} \in P}$ of $\C^* \times X(B)$ with the properties:
\begin{itemize}
\item $\phi_{\hat{b}_0} = \id$;
\item $\phi_{\hat{b}}(F_{\hat{b}}) = F_{\hat{b}_0}$;
\item $\phi_{\hat{b}}$ preserves $V$;
\item $(\phi_{\hat{b}})^*J_t$ tames $\omega_{\C^*} \oplus \omega$ for every $\hat{b} \in P$ and $t \in [0,1]$.
\end{itemize}
For each $\hat{b} \in P$, we define an unobstructed Lagrangian brane cobordism $V_{\hat{b}} := (V, \phi_{\hat{b}}^*J_V)$ which differs from $V$ only in the almost-complex structure. 
By choosing a path $\gamma$ from $\hat{b}$ to $\hat{b}_0$, we obtain a one-parameter family $V_\gamma$ interpolating between $V=V_{\hat{b}_0}$ and $V_{\hat{b}}$; thus by Lemma \ref{lem:cob_floer},  $HF^*(F_{\hat{y}},V) = HF^*(F_{\hat{y}},V_{\hat{b}})$. 
Now our assumptions on $\phi_{\hat{b}}$ ensure that we may define the Floer complexes $CF^*(F_{\hat{y}},V_{\hat{b}})$ by using the almost complex structures $(\phi_{\hat{b}})^*J_t$. 
Pullback by $(\phi_{\hat{b}})^{-1}$  identifies the holomorphic strips contributing to  $CF^*(F_{\hat{y}},V_{\hat{b}})$ with those contributing to $CF^*(F_{\hat{b}_0},V)$. 
 
It follows that, after applying an appropriate rescaling (cf. \cite[Eqn. 3.19]{Abouzaid:ICM}), each term in the Floer differential \eqref{eqn:differential} on $CF^*(F_{\hat{y}},V_{\hat{b}})$ is a `monomial' function of $\hat{y} \in Y_P$. 
Gromov compactness, applied to the holomorphic strips with boundary on $F_{\hat{b}}$, shows that the infinite sum of monomials converges when $\trop(\hat{y}) = \hat{b}$. 
Since this applies for all $\hat{b} \in P$, the coefficients in \eqref{eqn:differential} define analytic functions over $Y_P$, i.e. they belong to the affinoid ring $\mathcal{O}_P$. 

Now $m^1_{\hat{y}}: CF^*(F_{\hat{y}},V_{\hat{b}}) \to CF^{*+1}(F_{\hat{y}},V_{\hat{b}})$ is a linear endomorphism of a fixed finite-dimensional $\Lambda$-vector space, depending analytically on $\hat{y} \in Y_P$. 
The intersection of $\scrE^i_V$ with $Y_P$ is the locus of points where the sum of the ranks of the differentials $CF^{i-1}\to CF^i$ and $CF^i \to CF^{i+1}$ is strictly less than the rank of $CF^i$. 
This is an analytic subset, cut out by the vanishing of appropriate minors of the matrices of these linear maps.
\end{proof}

%\begin{rmk}\label{rmk:dummy}
%It is possible that all $\scrE^i_V$ are empty, even when $V$ is not. 
%For example, consider the Lagrangian brane cobordism $V \subset \C^*\times X(B)$ which is the product of $F_b$ with an arc $\gamma \subset \C^*$ which is contained in a disc and formed by joining  two negative ends with a semicircular arc (and which has no positive ends). An $S^1_r \subset \C^*$ with tranverse non-empty intersection with $\gamma$ meets it in two points. 
%Let $\hat{y} = (k,y)$ correspond to a fibre $F_{\hat{c}}$ equipped with local system $\xi$, where $\hat{c} = (r,c)$. The Floer complex $CF^*(F_{\hat{y}}, V)$ vanishes unless $c=b$, in which case one obtains $CF^*(F_y,F_b) \oplus CF^*(F_y,F_b)[1]$.  The Floer differentials are constant in $X(B)$  and project to the obvious strip in the base $\C^*$, yielding the natural identification between these complexes, and the Floer cohomology vanishes. 
%Note that although $V$ admits a grading, the grading cannot extend the canonical (constant zero phase) grading on the ends defined in Section \ref{Sec:gradings}, since the ends carry opposite orientations.
%\end{rmk}

\begin{prop}\label{prop:support_compactifies}
Suppose $\bL^-$ and $\bL^+$ have multiplicity-one ends. Then 
 $\scrE^i_V$ compactifies to define an analytic subset of $(\mathbb{P}^1)^{an}\times Y(B)$.
 \end{prop}

\begin{proof}
In order to show that the locus compactifies over $0 \in (\mathbb{P}^1)^{an}$, we must show that the coefficients of the Floer differential are analytic over polytope subdomains $\hat{P} = (-\infty,-R] \times P \subset \R \times B$ where the polytopes $P \subset B$ should cover $B$ and are allowed to be arbitrarily small. 
The algebra of functions over such an unbounded domain is 
\[ \mathcal{O}_{\hat{P}} := \left\{ \sum_{a:a_{1} \ge 0}\, f_a \cdot z^a \ : \ val(f_a) + a \cdot \hat{p} \to \infty \ \textrm{as} \ \|a\| \to \infty \text{ for all $\hat{p} \in \hat{P}$} \right\}\]
(see \cite[Section 6]{Rabinoff} -- in general the only powers $a$ which are allowed to appear are those such that $a \cdot \hat{p}$ is bounded above on the region $\hat{P}$, which for $\hat{P} = (-\infty,-R] \times P$ is equivalent to $a_{1} \ge 0$). 
So we must show that, under our hypotheses (on the shape of the almost complex structure etc)  the coefficients of the Floer differential will lie in such an algebra of functions, for torus fibres $F_{\hat{y}}$ sufficiently far out in the negative $\R$ direction.

\begin{figure}[ht]
\begin{center} 
\begin{tikzpicture}
\draw[thick, blue, dashed] (-0.75,-2.5) -- (-0.75,2.5); 
\draw[thick, blue, dashed] (0.75,-2.5) -- (0.75,2.5); 
\draw[semithick] (-3,-2.5) -- (-3,2.5);
	\draw (-3,2.9) node {$\widetilde{S^1_r} \times F_b$};
\draw[semithick] (-4,-1.95) -- (-.75,-1.95);
	\draw (-4.6,-1.95) node {$\sigma^2(\tilde{L}_1^-)$};
\draw[semithick] (-4,-.95) -- (-.75,-0.95);
	\draw (-4.6,-.95) node {$\sigma(\tilde{L}_2^-)$};
\draw[semithick] (-4,0.05) -- (-.75,0.05);
	\draw (-4.6,0.05) node {$\sigma(\tilde{L}_1^-)$};

\draw[semithick] (-4,1.05) -- (-.75,1.05);
	\draw (-4.5,1.05) node {$\tilde{L}_2^-$};

\draw[semithick] (-4,2.05) -- (-.75,2.05);
	\draw (-4.5,2.05) node {$\tilde{L}_1^-$};

\draw[semithick] (0.75,1.8) -- (4,1.8);
	\draw (4.5,1.8) node {$\tilde{L}^+$};
%\draw[semithick] (0.75,0.5) -- (4,0.5);
%	\draw (4.6,0.5) node {$\tilde{L}^+[1]$};

\draw[semithick] (0.75,-.2) -- (4,-.2);
	\draw (4.6,-0.2) node {$\sigma(\tilde{L}^+)$};

\draw[semithick] (0.75,-2.2) -- (4,-2.2);
	\draw (4.6,-2.2) node {$\sigma^2(\tilde{L}^+)$};
	
\fill[gray!10] (-0.75,-2.5) -- (-0.75,2.5) -- (0.75,2.5) -- (0.75,-2.5) -- cycle;
\draw (0,0) node {$\tilde{V}$};

\draw[fill=black!20!white] (-3,2.05) -- (-0.75,2.05) -- (-0.75,1.05) -- (-3, 1.05) -- cycle;
\draw[fill=black!20!white] (-3,1.05) -- (-0.75,1.05) -- (-0.75,0.05) -- (-3, 0.05) -- cycle;
	\draw[fill] (-3,2.05) circle (0.1); \draw (-3.2,1.8) node {$x$}; 	
	\draw[thick, arrows = ->] (-2.8, 1.9) .. controls (-2.5,1.5) and (-2.5,1) .. (-2.8, 0.2);
		\draw (-1.9,1.4) node {$m^1_1(x)$};

\end{tikzpicture}
\end{center}
\caption{The $q_{1}$-filtration lifted via the cover $\C\to\C^*$, and the image of a disc of weighted by $q_{1}^1$.}
\label{Fig:lifts}
\end{figure}

We can choose $R \gg 0$ such that the cobordism $V$ is cylindrical over the region $|r| \ge R-1$.  Let $\tilde{V}$ be the preimage of the cobordism $V$ in the infinite cyclic cover $\C \times X(B)$ of $\C^*\times X(B)$.  Each end $L_i^\pm$ of $V$ has infinitely many lifts, differing by  deck transformations. Let $\sigma$ denote the generating deck transformation. The Floer differential can be expanded as 
\begin{equation} \label{eqn:sum_over_winding}
m^1 = \sum_{j=-\infty}^\infty m^1_j(z_2,\ldots,z_{n+1}) \cdot z_{1}^j,
\end{equation}
where $m^1_j$ counts Floer strips whose boundary component on $F_{\hat{b}}$ winds $j$ times around the base cylinder $S^1_r$, i.e. whose lift connects a fixed lift $\tilde{L}_1^-$ to its image $\sigma^j(\tilde{L}_1^-)$ shifted $j$ places by the deck group, cf. Figure \ref{Fig:lifts}. 
Note that there are no Floer strips with fractional winding, by our assumption that $\bL^\pm$ have multiplicity-one ends (compare Remark \ref{rmk:fibre_mult_higher}). 
The lift of such a holomorphic strip projects to a holomorphic strip in $\C$ with one boundary on the universal cover $\tilde{S}^1_r$ of $S^1_r$ and the other lying in the projection of $\tilde{V}$. It is clear from the open mapping theorem that the $\tilde{S}^1_r$-boundary of such a strip can only move in one direction, so $m^1_j = 0$ for $j<0$ (compare to the proof of upper triangularity of the Floer differential in \cite[Lemma 4.4.1]{Biran-Cornea}). 
Therefore the only powers $z^a$ that can appear in the Floer differential have $a_{1} \ge 0$ as required.

It remains to check convergence of the Floer differential. 
We can do this by the Fukaya trick: the fact that the cobordism is cylindrical at infinity allows us to make the Fukaya trick work over the larger polytope.  
Let $J_t$ be a one-parameter family of $(\omega_{\C^*}\oplus \omega)$-tame almost-complex structures, appropriate for defining the Floer cohomology group $HF^*(S^1_{-R} \times F_b,V)$ as in Lemma \ref{lem:cob_floer}. 
We assume that $J_t=J_{\C^*} \oplus J_{z,t}$ for $r \le -R+1$. 

Now, we take a family of diffeomorphisms $\phi_s: \C^* \to \C^*$ for $s \ge 0$, having the form $\phi_s(r,\theta) = (\psi_s(r),\theta)$ where $\psi_s(r)$ is equal to the identity for $r \ge -R+1$ and $\psi_s(-R) = -R-s$.
In particular we note that $\phi_s$ takes $S^1_{-R}$ to $S^1_{-R-s}$. 
It is clear that if $J_{\C^*}$ is the standard complex structure on $\C^*$, and $\omega_{\C^*} = dr \wedge d\theta$ the standard symplectic form, then $(\phi_s)_* J_{\C^*}$ tames $\omega_{\C^*}$ for any $s$.
Therefore $J_{s,t} = (\phi_s \times \id)_*(J_{\C^*} \oplus J_{z,t})$ is a family of $(\omega_{\C^*} \oplus \omega)$-tame almost complex structures, which we can use to define the differential in $CF^*(S^1_{-R-s} \times F_b,V)$.
 
As in the usual Fukaya trick, the moduli spaces of strips defining the differentials in these Floer cohomology groups can be identified for different values of $s \ge 0$, but they are counted with different weights. 
Combining with the Fukaya trick in the direction of the base $B$, it follows that the coefficients of the Floer differential are convergent over $(-\infty,-R] \times P \subset \R \times B$ for sufficiently small $P$. 
It follows that $\scrE^i_V$ compactifies over $0 \in (\bP^1)^{an}$; the argument over $\infty \in (\bP^1)^{an}$ is identical.
\end{proof}

\begin{rmk}\label{rmk:fibre_mult}
The fibre of the compactification of $\scrE^i_V$ over $0 \in (\bP^1)^{an}$ is the locus of points $(0,y) \in (\bP^1)^{an} \times Y(B)$ such that $HF^i(F_y,L_j^-) \neq 0$ for some negative end $L_j^-$ of $V$ (as is clear from the proof of Proposition \ref{prop:support_not_empty}). 
Thus it is the union of the $i$th mirror supports of the negative ends of $V$. 
Similarly, the fibre over $\infty$ is the union of the $i$th mirror supports of the positive ends of $V$. 
\end{rmk}

\begin{rmk} \label{rmk:fibre_mult_higher}
Proposition \ref{prop:support_compactifies} generalises to the case in which $\bL^{\pm}$ may have higher-multiplicity ends.  Suppose that  $V$ has $a$ negative and $b$ positive ends, not necessarily carrying distinct torus fibres.  In this case, there may be Floer solutions which project to have  `fractional' winding in the base (cf. Figure \ref{Fig:lifts}, and imagine that $L_1^- = L_2^-$), and it is  therefore natural to expand the Floer differential in powers of $z_1^{1/a}$ near the negative end, and $z_1^{1/b}$ near the positive end. The mirror support most naturally compactifies to define an analytic subset of $\bP^1(a,b)^{an} \times Y(B)$, where $\bP^1(a,b)^{an}$ is the analytification of the weighted projective line (see \cite{Ulirsch} for a construction of  rigid analytic spaces associated to toric stacks; $\bP^1(a,b)^{an}$ can be constructed by hand by gluing charts).  There is a pushforward map $\bP^1(a,b)^{an} \to (\bP^1)^{an}$ coming from the forgetful map from the stack to the coarse moduli space. The push-forward of the mirror support is still analytic. Indeed, if  the mirror support is cut out near the negative end by analytic functions $p\left(z_1^{1/a},z_2,\ldots,z_{n+1}\right)$, then the push-forward is cut out by the analytic functions
\[
f(z_1,\ldots,z_{n+1}) = \prod_{\omega: \omega^a = 1} p\left(\omega z_1^{1/a},\ldots, z_{n+1}\right).
\]
To explain this equation, note that if $p$ were a polynomial in $z_1^{1/a}$, then the product on the right-hand side could be written as a polynomial in $z_1$ by standard properties of the resultant. 
The same result applies to analytic functions, by applying the result for polynomials to the polynomial truncations and passing to the limit. 
\end{rmk}

\subsection{Non-emptiness for distinct ends}

As before, let $V \subset \C^*\times X(B)$ be an unobstructed Lagrangian brane cobordism between tuples of fibres $\bL^- = (F_{b_i^-})$ and $\bL^+ = (F_{b_i^+})$.
It is clear that $\trop(\scrE_V)$ is contained in the image of $V$ under projection to $\R \times B$ (since two non-intersecting Lagrangians have vanishing Floer homology).   
In particular the tropicalization is contained in a compact set union the negative ends $\R_- \times  b_i^-$ and the positive ends $\R_+ \times b_i^+$. 
The following gives a condition under which $\trop(\scrE_V)$ contains such an end (it is phrased for negative ends but applies equally to positive ones):

\begin{prop} \label{prop:support_not_empty}
If a fibre $F_b$ in $\bL^-$ has multiplicity one, then $\trop(\scrE^0_V)$ contains an end $(-\infty,-R) \times b$ of multiplicity one.
\end{prop}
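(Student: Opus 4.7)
The plan is to exhibit, for each $\hat b = (r,b)$ with $r$ sufficiently negative, a specific point $\hat y \in \trop^{-1}(\hat b)$ lying in $\scrE_V$; the natural candidate is $F_{\hat b}$ equipped with the trivial $U_\Lambda$-local system. Since $V$ is cylindrical outside a compact subset, there is some $R \ll 0$ such that $V \cap \{\log|z| \le R\}$ coincides with its negative cylindrical end model. The multiplicity-one hypothesis supplies a unique index $i_0$ with $b_{i_0}^- = b$, so for any $r < R$ the Lagrangian $F_{\hat b} = S^1_r \times F_b$ meets $V$ in the single Morse--Bott clean intersection locus
\[ C := \{(r, x_{i_0}^-)\} \times F_b \cong F_b;\]
other negative ends lie over fibres distinct from $F_b$ and so do not meet $F_{\hat b}$, while the positive ends and the compact core of $V$ sit outside $\{\log|z| \le R\}$.

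Next I would compute $HF^*(F_{\hat y}, V)$ by a standard Morse--Bott perturbation. Choose a $C^0$-small Morse function $\psi : F_b \to \R$, extend it to a compactly supported function on a product neighbourhood of $C$, and perturb $V$ by its time-one Hamiltonian flow. The resulting transverse intersection points are in bijection with $\mathrm{Crit}(\psi)$, and the key claim is that the Floer differential coincides with the Morse differential of $\psi$, yielding
\[ HF^*(F_{\hat y}, V) \cong H^*(F_b; \Lambda) \neq 0.\]
Granting this, $\hat y \in \scrE_V$ for every $r < R$, and the images of these points under $\trop$ fill out the desired set $(-\infty, R) \times \{b\}$.

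The hard step is the confinement of Floer strips to a product neighbourhood of $C$, where the identification with Morse trajectories holds. The plan is to combine two ingredients: first, for $\|\psi\|_{C^0}$ sufficiently small every rigid Floer trajectory carries symplectic area below any preassigned threshold, since its area is controlled by the difference of critical values of $\psi$; second, Gromov monotonicity (using unobstructedness of $V$ to rule out disc and sphere bubbles in the compactification) bounds below the area of any holomorphic strip that exits a fixed neighbourhood of $C$. Taking $\psi$ small enough therefore traps every Floer trajectory in a product neighbourhood of $C$, where a split almost-complex structure reduces holomorphic strips to constant-in-$\C^*$ $J$-holomorphic strips in $F_b$ joining critical points of $\psi$; a standard Po\'zniak / Fukaya--Oh--Ohta--Ono argument then identifies these with gradient flow lines of $\psi$ on $F_b$, which establishes the claim.
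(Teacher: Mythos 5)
There is a genuine gap in the confinement step, and it is exactly the point where the multiplicity-one hypothesis has to do real work. Your claim that ``every rigid Floer trajectory carries symplectic area below any preassigned threshold, since its area is controlled by the difference of critical values of $\psi$'' is false: the action functional on the path space from $F_{\hat{b}}$ to $V$ is multivalued, and besides the low-energy strips in the trivial relative homotopy class (which do reduce to Morse trajectories of $\psi$), there are strips whose projection to $\C^*$ winds $j\ge 1$ times around the cylinder, passing over the compact core of $V$ and the other ends. Their energy is bounded \emph{below} by $j\varepsilon$ for some $\varepsilon>0$ independent of $\psi$ (this is the reverse isoperimetric inequality in the paper's proof), and they are rigid and contribute to the differential with Novikov weight $q^{E(u)}$ --- rigidity constrains the index, not the energy, and the Floer differential has no energy cutoff. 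Gromov monotonicity therefore does not exclude them; it only tells you they have definite energy, which is irrelevant since they are counted anyway. Remark \ref{rmk:dummy} of the paper exhibits exactly such non-local strips (with non-constant $\C^*$-projection) producing a non-trivial differential between components of a clean intersection, and causing the Floer cohomology of a fibre with a cobordism to vanish.

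Consequently your conclusion that the \emph{trivial} local system lies in $\scrE_V$ is unjustified and false in general: writing $m^1(e)=\sum_{j\ge 0} m^1_j\, q_{n+1}^j$ in the winding variable, the trivial local system only kills $m^1_0(e)=(q_1-1,\dots,q_n-1)$, while the higher $m^1_j(e)$ need not vanish and can make $e$ exact. The paper's proof instead \emph{solves} for the holonomies $q_1,\dots,q_n$ as power series in $q_{n+1}$, order by order in the $q_{n+1}$-adic filtration, so as to cancel all the winding contributions; the inductive step works precisely because the derivatives $\partial m^1_0/\partial q_i$ span $CF^1(F_{\hat{b}},V)$, and this is where multiplicity one enters (Remark \ref{rmk:common_ends} shows the step fails for multiplicity $\ge 2$). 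A warning sign in your argument is that, beyond identifying the clean intersection, you never use the multiplicity-one hypothesis: your reasoning would equally ``prove'' non-vanishing for ends of higher multiplicity, which the paper shows is not true. Your Morse--Bott setup and the identification of the zeroth-order differential are fine and match the paper's; what is missing is the treatment of the winding strips and the order-by-order correction of the local system.
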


\begin{proof} 
Suppose that $(-R-1,b)$ is contained in a polytope domain  $\hat{P} = (-\infty,-R] \times P \subset \R \times B$ over which the Floer differential on $CF^*(F_{\hat{y}},V)$ converges, as in the proof of Proposition \ref{prop:support_compactifies}. If $\trop(\hat{y})=\hat{b}=(r,b)$ with $r \le -R-1$, then $F_{\hat{b}} \cap V = F_b$ is a clean intersection comprising a  single copy of the torus $F_b$ itself.  %We claim that there is a point $\hat{y}$ corresponding to a local system $\xi$ on $F_{\hat{b}}$ with respect to which the Floer cohomology $HF^*(F_{\hat{y}}, V) \neq 0$, so $\hat{y} \in \scrE_V$. 
Thus we can use a Morse--Bott model (cf. \cite{Seidel:Flux}) for the Floer cochain group $CF^*(F_{\hat{y}},V) = C^*_{Morse}(F_b) \simeq \wedge^*(\Lambda^n)$, where we take a perfect Morse function on $F_b \simeq T^n = S^1\times \cdots\times S^1$ which is a sum of Morse functions on the distinct $S^1$-factors. 
Since $CF^{-1}=0$ and $CF^0$ has rank one and is spanned by the identity $e \in C^0_{Morse}$, in order to determine $\scrE^0_V$, we only need to consider the differential $m^1(e) \in \Lambda^n$.

We equip $V$ with the trivial local system, and let $(z_1,\ldots,z_{n+1})$ be the monodromies of the local system on $F_{\hat{y}}$ around the different $S^1$ factors. 
We can expand the Floer differential applied to $e$ as
\[ m^1(z_1,\ldots,z_{n+1}) = \sum_{j \ge 0} m^1_j(z_2,\ldots,z_{n+1})\cdot z_{1}^j\]
as in the proof of Proposition \ref{prop:support_compactifies}, where we recall that $m^1_j$ counts strips that wind $j$ times around the $S^1_r$ factor.

The zero-order term $m^1_0$ counts strips with no winding. The open mapping theorem applied to the image of the disc projected to $\C$ shows that any such Floer strip must have constant projection and hence live inside a single fibre, so we have
\begin{equation} \label{eqn:lowest_order}
m^1_0(z_2,\ldots,z_{n+1}) = (z_2-1,z_3-1,\ldots, z_{n+1}-1)
\end{equation}
by considering the parallel transport of the local system along the two arcs of each circle factor (Morse trajectories) in $F_b$. 
Thus we have a unique solution $(0, 1,\ldots,1)$ to $m^1(z_1,\ldots,z_{n+1})=0$, inside $\{0\} \times Y_P$; and furthermore, the matrix of partial derivatives $\{\partial m^1 / \partial z_i\}_{2 \le i \le n+1}$ is invertible at this solution. 
By the implicit function theorem \cite[(10.8)]{Abhyankar1964}, there exists $\delta>0$ and power series $\{z_i(z_{1})\}_{2\le i\le n+1}$ with $z_i(0)=1$, convergent for $|z_{1}|<\delta$, such that $(z_2(z_{1}),\ldots,z_{n+1}(z_{1}))$ is the unique solution of the `equation' $m^1(z_1,\ldots,z_n,z_{n+1})=0$ in a neighbourhood of $(1,\ldots,1)$. 
It follows that $\trop(\scrE^0_V)$ contains the edge $(-\infty,\log(\delta)) \times \{b\}$ with weight $1$.
\end{proof}

\begin{rmk} \label{rmk:common_ends}
Note that if the ends of $V$ are not geometrically distinct, this argument breaks down. Suppose $V$ has $k \geq 2$ ends modelled on $F_b$, so $CF^*(F_{\hat{b}}, V) \ = \ \oplus_{j=1}^k H^*(F_b;\Lambda)$. 
If $e_j$ denotes the identity element in the $j$-th copy of $H^*(F_b;\Lambda)$, we  have
\[
m^1_0\left(\bigoplus_{j=1}^k a_j \cdot e_j\right) = \bigoplus_{j=1}^k a_j \cdot (z_2-1,\ldots, z_{n+1}-1).
\]
It is no longer true that the matrix of partial derivatives $\{\partial m^1_0/\partial z_i\}_{2 \le i \le n+1}$ is invertible.
\end{rmk}

\subsection{Separating ends\label{Sec:separable}}

In view of Remark \ref{rmk:common_ends}, if all ends of $V$ have multiplicity greater than one, it is not clear that $V$ has non-trivial Floer cohomology with any torus fibre in $\C^*\times X(B)$ or that $\scrE_V$ is non-empty. Under additional topological hypotheses, one can ensure this by geometrically separating the ends of $V$ by a Lagrangian isotopy. 

Let $V \subset \C^*\times X(B)$ be a Lagrangian brane cobordism between fibres, with ends $L_i^\pm$ having multiplicities $m_i^\pm$. 
For any $\alpha \in H^1(V;\R)$ and sufficiently small $\varepsilon \in \R$ we can deform $V$ by a symplectic isotopy of flux $\varepsilon \cdot \alpha$, to obtain a new Lagrangian submanifold $V_{\epsilon \cdot \alpha}$. 
We can arrange that the deformation is also a cobordism, i.e., it also has cylindrical ends which are fibres.

\begin{lem} \label{lem:how_to_separate}
Suppose that the classes $\iota_j^* \alpha \in H^1(L_i^\pm;\R)$ are pairwise distinct for $1 \le j \le m_i^\pm$. Then for $\epsilon \neq 0$, the cobordism $V_{\epsilon \cdot \alpha}$ has $m_i$ ends of multiplicity one which are pairwise distinct perturbations of $L_i$.
\end{lem}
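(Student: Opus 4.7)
The plan is to exhibit an explicit model for the flux deformation on each end of $V$ and then read off pairwise distinctness of the perturbed fibres from the hypothesis. The key geometric input is the canonical identification
\[ H^1(F_b;\R) \;\cong\; T_b B \]
for any fibre $F_b = T^*_b B / T^*_{\Z,b} B$ of $X(B)$, under which the flux of a translation isotopy $\{F_{b+sv}\}$ in the direction $v \in T_b B$ is precisely $v$. This is a one-line computation in Arnold--Liouville coordinates $(b_k,\theta_k)$: the infinitesimal generator $\partial/\partial b_k$ satisfies $\iota_{\partial/\partial b_k}\omega|_{F_b} = d\theta_k$.

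First I would choose a closed 1-form $\beta$ on $V$ representing $\alpha$, arranged so that on each end $\gamma_j \times L_i$ (with $L_i = F_{b_i}$) the form $\beta$ is the pullback via projection to $L_i$ of the unique translation-invariant representative $\beta_j \in \Omega^1(L_i)$ of $\iota_j^*\alpha$. This is routine: start with any closed representative of $\alpha$, and on each end modify by $df_j$ for a suitable primitive $f_j$ of the difference to the harmonic form, then patch by a cutoff. By the Weinstein neighbourhood theorem, a tubular neighbourhood of $V$ in $\C^* \times X(B)$ is symplectomorphic to a neighbourhood of the zero section in $T^*V$, and for small $\epsilon$ the graph of $\epsilon\beta$ gives a Lagrangian submanifold $V_{\epsilon\alpha}$ which realises the flux deformation of flux $\epsilon\alpha$.

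On the end $\gamma_j \times L_i$ the Weinstein model specialises to the standard identification of $T^*(\gamma_j \times L_i)$ with a neighbourhood of $\gamma_j \times F_{b_i}$ in $\C^* \times X(B)$. Since $\beta$ is pulled back from $\beta_j$ and $\beta_j$ is translation-invariant, the graph of $\epsilon\beta$ on this end is exactly $\gamma_j \times F_{b_i + \epsilon v_j}$, where $v_j \in T_{b_i}B$ is the vector corresponding to $\iota_j^*\alpha$ under the isomorphism above. Thus $V_{\epsilon\alpha}$ remains a cylindrical cobordism whose ends are fibres, and the $m_i$ original ends modelled on $L_i$ are replaced by the fibres $F_{b_i + \epsilon v_j}$. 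By hypothesis the classes $\iota_j^*\alpha$ are pairwise distinct, so the vectors $v_j$ are pairwise distinct, so for $\epsilon \neq 0$ sufficiently small the fibres $F_{b_i + \epsilon v_j}$ lie in a single affine chart around $b_i$ and are pairwise distinct. Hence each of the new ends has multiplicity one, completing the proof. The main technical point is the initial construction of $\beta$ with the prescribed translation-invariant behaviour on the ends; once this is done, every subsequent step is a transparent computation in Arnold--Liouville coordinates.
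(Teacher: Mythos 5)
Your proposal is correct and is essentially the paper's argument: the paper's entire proof is the one-line observation that the copies of $L_i$ are shifted by Lagrangian isotopies with the pairwise distinct fluxes $\iota_j^*\alpha$, and you have simply filled in the standard details (the identification $H^1(F_b;\R)\cong T_bB$, the graph-of-a-closed-one-form model for the flux deformation, and the translation-invariant normalization of the representative on each end). The only point worth flagging is that the conclusion holds for $\epsilon\neq 0$ \emph{sufficiently small}, which you correctly note and which is implicit in the paper's setup preceding the lemma.
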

\begin{proof}
The copies of $L_i$ are shifted by Lagrangian isotopies with the pairwise distinct fluxes $\iota_j^* \alpha$.
\end{proof}

\begin{defn}
We say that the end $L_i^\pm$ is `separable' if the maps
\begin{equation} \label{eqn:separable}
\iota_j^*:H^1(V;\R) \to  H^1(L_i^\pm;\R)  \qquad 1 \leq j \leq m_i^\pm
\end{equation}
induced by inclusion are pairwise distinct; and that $V$ has `separable ends' if all of its ends are separable.
\end{defn}

If the end $L_i^\pm$ is separable, then the classes $\iota_j^* \alpha$ are pairwise distinct for $1 \le j \le m_i^\pm$ for all $\alpha$ in the complement of a finite union of proper linear subspaces. 
Thus the end $L_i^\pm$ is `separated' into distinct ends of multiplicity one in $V_{\varepsilon \cdot \alpha}$, for generic $\alpha$. 
If $V$ has separable ends, then all ends of $V$ are separated into distinct ends of multiplicity one in $V_{\varepsilon \cdot \alpha}$ for generic $\alpha$.

\begin{cor} \label{cor:separable_tropical}
If $V$ is unobstructed and has separable ends, then $\R\times B$ contains a tropical curve whose ends are given by the ends of $V$, weighted with the multiplicities of the corresponding Lagrangians.
\end{cor}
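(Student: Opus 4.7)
The plan is to perturb $V$ slightly using the separability hypothesis, extract a mirror analytic subvariety, tropicalize its one-dimensional irreducible components, and finally collapse the perturbation to recover the prescribed multiplicities at the ends.

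First, I would use that each end of $V$ is separable: the condition that the inclusion maps in \eqref{eqn:separable} be pairwise distinct fails on a finite union of proper linear subspaces of $H^{1}(V;\R)$. Choose $\alpha \in H^{1}(V;\R)$ avoiding all of these subspaces. Then for all sufficiently small $\epsilon \neq 0$, Lemma \ref{lem:how_to_separate} produces a flux perturbation $V_{\epsilon} := V_{\epsilon\alpha}$ whose ends consist of $\sum_{i}(m_{i}^{-} + m_{i}^{+})$ pairwise geometrically distinct fibres $F_{b_{i}^{\pm,j}}$, with $b_{i}^{\pm,j} \to b_{i}^{\pm}$ as $\epsilon \to 0$, each of multiplicity one. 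By standard Gromov-compactness considerations, for small $\epsilon$ we may also perturb the taming almost complex structure along with $V$ so that $V_{\epsilon}$ remains unobstructed.

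Next, Proposition \ref{prop:support_is_analytic} produces an analytic subvariety $\scrE_{V_{\epsilon}} \subset \mathbb{G}_{m} \times Y(B)$, and since every end of $V_{\epsilon}$ now has multiplicity one, Proposition \ref{prop:support_not_empty} guarantees that $\trop(\scrE_{V_{\epsilon}})$ contains the cylindrical ray $(-\infty, R) \times \{b_{i}^{-,j}\}$ for every negative perturbed end (and dually for each positive one). By Theorem \ref{thm:conrad}, $\scrE_{V_{\epsilon}}$ decomposes into irreducible components; any component meeting one of these rays must be at least one-dimensional. Taking $Z_{\epsilon}$ to be the union of the one-dimensional irreducible components of $\scrE_{V_{\epsilon}}$ and applying Theorem \ref{thm:an_trop} to each, the tropicalization $C_{\epsilon} := \trop(Z_{\epsilon}) \subset \R \times B$ inherits the structure of a tropical curve whose unbounded edges are precisely the cylindrical rays just described, each of weight one.

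The final step, which I expect to be the main obstacle, is to pass to the limit $\epsilon \to 0$. The $m_{i}^{\pm}$ weight-one unbounded edges of $C_{\epsilon}$ clustering near $(\pm\infty, b_{i}^{\pm})$ must coalesce into a single unbounded edge of weight $m_{i}^{\pm}$ at $b_{i}^{\pm}$, giving the desired tropical curve $C_{0} \subset \R \times B$. Making this precise requires a uniform bound on the combinatorial complexity of the $C_{\epsilon}$ (number of vertices, bounded edges, and primitive edge directions), together with a tropical compactness statement. Such bounds should follow from the fact that a sufficiently small flux perturbation of $V$ only deforms it slightly, so the intersection number of $V_{\epsilon}$ with any fixed compact family of test fibres is uniformly bounded in $\epsilon$. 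The delicate point is to verify that in the limit the weights at each $b_{i}^{\pm}$ sum to precisely $m_{i}^{\pm}$ and no spurious unbounded edges appear, so that $C_{0}$ is a genuine tropical curve with the advertised ends.
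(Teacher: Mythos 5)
Your overall strategy coincides with the paper's: use separability to perturb $V$ by a generic flux so that all ends become distinct and of multiplicity one, extract the mirror support, tropicalize its one-dimensional components, and then let the perturbation go to zero. The first three steps are essentially correct as you describe them. However, the step you yourself flag as ``the main obstacle'' --- passing to the limit $\epsilon \to 0$ --- is a genuine gap as written. You propose to establish a uniform bound on the combinatorial complexity of the curves $C_\epsilon$ together with a tropical compactness statement, and assert that these ``should follow'' from uniform intersection-number bounds of $V_\epsilon$ with test fibres. That inference is not justified: bounding the rank of the Floer complexes controls neither the number of vertices and bounded edges of $\trop(\scrE_{V_\epsilon})$ nor their positions or weights as $\epsilon$ varies, and even granting some compactness you give no mechanism forcing the limiting weights at $b_i^\pm$ to sum to exactly $m_i^\pm$, nor ruling out spurious components appearing or disappearing in the limit. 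As it stands the proof is incomplete precisely at its crucial point.

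The paper closes this gap not by proving a compactness theorem for a sequence of tropical curves, but by making the flux parameter part of a \emph{single} rigid-analytic family. One considers the affinoid domain $U_P = \trop^{-1}(P)$ for a small polytope $P \subset H^1(V;\R)$, whose points parametrize deformations $V_u$ equipped with local systems, and shows (via the Fukaya trick) that the locus of $(\hat y, u)$ with $HF^*(F_{\hat y}, V_u) \neq 0$ is an analytic subvariety $\EuF \subset \mathbb{G}_m \times Y(B) \times U_P$ of pure dimension $1 + b_1(V)$ near the separated ends. Its tropicalization is then a \emph{locally finite rational polyhedral complex} $\Gamma \subset \R \times B \times P$ by Theorem \ref{thm:gubler}. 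Restricting to an interval $I \subset P$ through the origin and avoiding the bad linear subspaces, the fibres $C_t = \Gamma_I|_t$ for $t \neq 0$ are the tropical curves you call $C_\epsilon$, but now their vertices and edges vary \emph{linearly} in $t$ because they are faces of a fixed polyhedral complex. Hence the limits as $t \to 0^+$ exist for free, weights add by definition, and the balancing of $C_0$ is verified by summing the primitive directions of edges of $C_t$ exiting a small ball around each limiting vertex. This is the idea your proposal is missing: trading an analytic limit of a family of tropical curves for the elementary piecewise-linear structure of the tropicalization of one higher-dimensional analytic variety. You should also note that unobstructedness of the perturbed cobordisms is handled tautologically by the Fukaya trick (the relevant holomorphic curves are literally identified with those for $V$ via the conjugating diffeomorphisms), rather than by a separate Gromov-compactness argument.
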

\begin{proof}
We consider the non-Archimedean SYZ fibration $\trop: H^1(V;\Lambda^*) \to H^1(V;\R)$ induced by $val:\Lambda^* \to \R$. 
For a small polytope $P$ enclosing the origin in $H^1(V;\R)$, we consider the polytope subdomain $U_P := \trop^{-1}(P)$. 
The points $u \in U_P$ parametrize pairs $V_u = (V_{\trop(u)},\xi_u)$ consisting of a Lagrangian deformation of $V$ with flux $\trop(u)$, equipped with a $U_\Lambda$-local system $\xi_u$. 

We consider the set of points $(\hat{y},u) \in \mathbb{G}_m \times Y(B) \times U_P$ such that $HF^i(F_{\hat{y}},V_u) \neq 0$. 
Applying the Fukaya trick as in the proof of Proposition \ref{prop:support_is_analytic}, we find that this is an analytic subset $\EuF^i$. 
Furthermore, since $V$ has separable ends, $V_u$ has distinct ends so long as $\trop(u)$ lies off of a finite union of proper linear subspaces. 
The arguments of the previous section show that, locally near one of these distinct ends, the projection $\EuF^0 \to \mathbb{G}_m \times U_P$ is an isomorphism. We take the union of irreducible components containing these ends, which defines a subset of $\EuF^0$ which has dimension $1+b_1(V)$.  
The tropicalization of this union of components gives a locally-finite rational polyhedral complex $\Gamma \subset \R \times B \times P$, pure of dimension $1+b_1(V)$, by Theorem \ref{thm:gubler}. 

Let us choose an interval $I \subset P$, passing through the origin and not lying in the finite union of proper linear subspaces referenced above. 
We define $\Gamma_I := \Gamma \cap \R \times B \times I$.
Shrinking $I$ if necessary, and using the local-finiteness of $\Gamma$, we may assume that the fibre over any $t \in I \setminus \{0\}$ is a one-dimensional rational polyhedral complex $C_t \subset \R \times B$ whose ends coincide with the ends of $V_t$ outside of a fixed compact subset of $\R \times B$, and have multiplicity one. We can show that $C_t$ has the structure of a tropical curve by Assumption \ref{ass:an_trop}. 

By the local finiteness of $\Gamma_I$, for any point in $\R \times B$ there exists a neighbourhood $U$ of the point and an $\epsilon > 0$, such that the topology of $C_t \cap U$ is constant for $t \in (0,\epsilon)$ and the edges and vertices vary linearly. 
Thus there is a well-defined limit of each vertex and edge as $t \to 0^+$. 
We define the tropical curve $C_0$ to be the union of all limits of vertices and edges of $C_t$. 
We define the weight associated to an edge of $C_0$ to be the sum of weights of edges that converge to that edge. 
Now consider a vertex $v$ of the limiting curve $C_0$. 
Consider a ball in $\R \times B$ which is centered on $v$, and small enough that it contains no other vertex of $C_0$. 
For sufficiently small $t \neq 0$, the ball only contains vertices of $C_t$ that converge to $v$. 
The sum of the primitive vectors pointing along edges of $C_t$ exiting this ball, multiplied by their weights, vanishes because $C_t$ is balanced (the contributions coming from edges contained entirely inside the ball cancel out). 
It follows that $C_0$ is balanced at $v$. 
Therefore the limit $C_0$  is a tropical curve, which has the required properties by construction.
\end{proof}

\begin{rmk}
If we were working with Floer-theoretically unobstructed cobordisms rather than tautologically unobstructed ones, we would additionally need to show that the bounding cochains persisted under the perturbations of $V$ introduced in the proof of Corollary \ref{cor:separable_tropical}.
\end{rmk}

\subsection{Conclusions} Let $B$ be a tropical affine torus which contains no tropical curve. The following is immediate from Corollary \ref{cor:separable_tropical}:

\begin{prop}\label{prop:nosepcob}
If $V\subset \C^*\times X(B)$ is an unobstructed Lagrangian brane cobordism between tuples of fibres $\bL^-$ and $\bL^+$, then either $\bL^-=\bL^+$ or $V$ does not have separable ends.
\end{prop}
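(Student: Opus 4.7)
The plan is a proof by contradiction. Suppose $V$ is unobstructed with separable ends. First I would use Lemma~\ref{lem:how_to_separate}, applied to a generic class $\alpha \in H^1(V;\R)$, to perturb $V$ by a compactly-supported symplectic isotopy of small flux $\epsilon\cdot\alpha$, producing a cobordism $V_\epsilon$ whose ends are pairwise distinct Lagrangian torus fibres $F_{\tilde b_j}$ at pairwise distinct points $\tilde b_j \in B$, each of multiplicity one. Since the perturbation is $C^1$-small, $V_\epsilon$ remains unobstructed after a small adjustment of the tame almost-complex structure. As all ends of $V_\epsilon$ now have multiplicity one, separability is vacuous, so Corollary~\ref{cor:separable_tropical} furnishes a tropical curve $C \subset \R\times B$ whose noncompact ends are weight-one vertical rays through the points $(\pm\infty,\tilde b_j)$.

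Next I would push $C$ down to $B$ along the projection $\pi\colon \R\times B \to B$: edges of $C$ parallel to the $\R$-factor are discarded, while those with nonzero horizontal direction descend (with their weights and projected primitive directions) to weighted edges in $B$. Balancing of the resulting complex $\pi_*(C)$ at a vertex $w \in B$ follows by summing the balancing relations for $C$ over all vertices of $C$ in the finite fibre $\pi^{-1}(w)$: contributions of vertical edges vanish under $d\pi$, and no non-vertical edge of $C$ can have both endpoints in $\pi^{-1}(w)$. Hence $\pi_*(C)$ is a balanced weighted rational polyhedral complex on $B$ of dimension at most one.

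It remains to verify that $\pi_*(C)$ is non-empty, in which case it is a tropical curve on $B$ contradicting the hypothesis. If every edge of $C$ were vertical, then $C$ would be a weighted union of lines $\R\times\{b_k\}$, so each $b_k$ would appear as both a positive and a negative end of $V_\epsilon$; this contradicts the pairwise distinctness of the ends of $V_\epsilon$. The main obstacle, in my view, is verifying that the separating perturbation preserves tautological unobstructedness---precisely the technical point flagged in Remark~\ref{rmk:samegr}. Granting this, the remainder is formal, relying on the linearity of $d\pi$ and its compatibility with the tropical balancing condition.
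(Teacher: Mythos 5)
Your overall route---use separability to produce a tropical curve in $\R\times B$, then project to $B$ and use balancing to contradict the hypothesis---is the argument the paper intends (the projection step is exactly the one in Corollary \ref{cor:Chow_free}). But two steps do not hold up. First, the detour through $V_\epsilon$ is both unnecessary and ungrounded. Corollary \ref{cor:separable_tropical} is stated for $V$ itself precisely so that one never needs the deformed cobordisms to be tautologically unobstructed: its proof runs the entire family $V_u$ of flux deformations through the Fukaya trick, defining $CF^*(F_{\hat y},V_u)$ with almost-complex structures pulled back by diffeomorphisms carrying $V_u$ to $V$, so only unobstructedness of $V$ is ever invoked. Your assertion that $V_\epsilon$ ``remains unobstructed after a small adjustment of the almost-complex structure'' is exactly the claim the paper is engineered to avoid: tautological unobstructedness is not known to be open under perturbation, particularly for a non-compact Lagrangian whose ends are being translated (note also that an isotopy realizing nonzero flux on the ends cannot be compactly supported). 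You should simply apply Corollary \ref{cor:separable_tropical} to $V$ directly; it already yields a tropical curve in $\R\times B$ whose ends are the ends of $V$ weighted by multiplicities.

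Second, your non-emptiness argument for the projection fails. Lemma \ref{lem:how_to_separate} separates the $m_i$ copies of a \emph{single} end $L_i$ into pairwise distinct fibres; it does not prevent a separated negative end from coinciding with a separated positive end, so the case in which $C$ is a union of vertical lines $\R\times\{b_k\}$ is not excluded by the ``pairwise distinctness'' you invoke. Indeed it cannot be excluded at this level of generality: the product cobordism $\gamma\times F_b$, with $\gamma$ a radial arc in $\C^*$, is unobstructed, has separable (multiplicity-one) ends, and its associated tropical curve is exactly $\R\times\{b\}$. The vertical case is genuinely ruled out only under the stronger hypothesis of Corollary \ref{cor:no_cobordisms}, namely that \emph{all} ends, negative and positive together, are pairwise distinct; that is the form in which the statement is actually used, and your proof should either import that hypothesis or flag the degenerate case explicitly.
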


Theorem \ref{thm:none} is an immediate consequence, because ends of multiplicity one are separable ends. 
Combining with Theorem \ref{thm:formlift}, we obtain:
%
%\begin{cor}\label{cor:no_cobordisms}
%If $\bL^-$ and $\bL^+ = (F_{b_i^+})$ are tuples of pairwise-distinct fibres of $X(B) \to B$, there is no unobstructed cylindrical Lagrangian brane cobordism $V$ between $\bL^-$ and $\bL^+$, unless $\bL^-=\bL^+$.
%\end{cor}

\begin{cor}
Suppose that the tropical affine torus $B$ contains no tropical curve, but does contain a mixed-sign tropical curve. Then there exists an embedded formal Lagrangian brane cobordism in $X(B)$, which admits an immersed Lagrangian brane representative, but which admits no unobstructed representative.
\end{cor}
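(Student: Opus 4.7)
The plan is to use the mixed-sign tropical curve in $B$ to build a tropical $1$-cycle $\hat V \subset \R \times B$ with pairwise distinct fibre ends, invoke Theorems~\ref{thm:cyclesimmlift} and~\ref{thm:formlift} to produce the immersed and embedded formal Lagrangian representatives, and then invoke Corollary~\ref{cor:no_cobordisms} to rule out an unobstructed representative.

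Fix a mixed-sign parametrized tropical curve $h:(C,\epsilon)\to B$ with non-constant image. Adapting the graph construction of Section~\ref{subsec:push} to the mixed-sign setting, a continuous piecewise-affine function $f:C\to\R$ satisfying the twisted balance $\sum_{E_i\to v}\epsilon(E_i)\,df(z_i)=0$ at every vertex of $C$, with slope jumps $n_j\in\{\pm 1\}$ at finitely many interior points $p_j$, defines a mixed-sign parametrized tropical curve $h':(C',\epsilon')\to\R\times B$ by $c\mapsto(f(c),h(c))$ on $C$ and by attaching semi-infinite rays of slope $-n_j$ in the $\R$-direction at each $p_j$. The image $\hat V$ is a tropical $1$-cycle, cylindrical near $\pm\infty$ along the fibres $F_{h(p_j)}$. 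A mixed-sign analogue of the tropical Abel--Jacobi isomorphism \eqref{eqn:tropAJch} (with $\Jac(C,\epsilon)$ in place of $\Jac(C)$) ensures that for sufficiently many $p_j$ one can arrange both the principal divisor condition on $\sum n_j\cdot p_j$ and the pairwise distinctness of the $h(p_j)\in B$.

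Since $B$ contains no tropical curve, $\dim B\ge 2$ (otherwise $B$ itself would be a tropical curve in $B$), so $\dim(\R\times B)\ge 3$. Theorem~\ref{thm:cyclesimmlift} then produces an immersed, locally exact, graded Lagrangian lift $i:L\to X(\R\times B)\cong\C^*\times X(B)$ of $\hat V$ whose ends, by the orientation and grading conventions of Section~\ref{Sec:gradings}, carry the natural brane structures on the fibres $F_{h(p_j)}$; hence $L$ is a genuine immersed Lagrangian brane cobordism between pairwise distinct fibres. Theorem~\ref{thm:formlift} perturbs $i$ into an embedded graded formal Lagrangian lift $V^{\mathrm{fm}}$ of the same $\hat V$, which is the embedded formal cobordism required by the statement, and $i$ is its immersed representative. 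An unobstructed representative of $V^{\mathrm{fm}}$ would be an unobstructed Lagrangian brane cobordism between the pairwise distinct fibres $F_{h(p_j)}$, which is forbidden by Corollary~\ref{cor:no_cobordisms} under our hypothesis.

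The main technical point is the combinatorial choice of $f$: one needs the slope-jump divisor on $C$ to be principal in the mixed-sign sense, while simultaneously arranging the images $h(p_j)$ to be pairwise distinct in $B$. A mixed-sign tropical Abel--Jacobi theorem paralleling \eqref{eqn:tropAJch}, plus a genericity argument for the positions of the $p_j$, delivers both; everything else is a direct appeal to the earlier theorems.
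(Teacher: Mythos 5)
Your overall route is the intended one: convert the mixed-sign tropical curve in $B$ into a mixed-sign tropical $1$-cycle in $\R\times B$ whose ends are vertical rays over pairwise distinct points of $B$, lift it using Theorems \ref{thm:cyclesimmlift} and \ref{thm:formlift} (your observation that the hypothesis forces $\dim B\ge 2$, hence $\dim(\R\times B)\ge 3$, is exactly what makes Theorem \ref{thm:formlift} applicable), and then invoke Corollary \ref{cor:no_cobordisms} to exclude an unobstructed representative. This is precisely how the corollary is meant to follow from the preceding results.

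The one step you should not delegate to a ``mixed-sign tropical Abel--Jacobi isomorphism'' is the construction of $f$. No such theorem is established in the paper, and it cannot be assumed: for an arbitrary $(C,\epsilon)$ the mixed-sign integration pairing may be degenerate, in which case $\Jac(C,\epsilon)$ is not a tropical torus and the analogue of \eqref{eqn:tropAJch} does not even make sense. Fortunately nothing global is needed. Fix one edge $E$ of $C$ and a subsegment $[a,b]\subset E$ short enough that $h|_{[a,b]}$ is injective, and let $f$ be the trapezoid $\max\{0,\min\{t-a,\,c,\,b-t\}\}$ on $E$ (small $c>0$) and zero elsewhere. Then $f$ is constant near every vertex, so the twisted balance there is vacuous; its slope jumps are $+1$ at $a,b$ and $-1$ at $a+c,b-c$; and the four images $h(a),h(a+c),h(b-c),h(b)$ are pairwise distinct. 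The graph construction of Section \ref{subsec:push} (each new vertical ray inheriting the sign of its ambient edge, as the twisted balancing forces) then yields the required mixed-sign $1$-cycle with four multiplicity-one ends over distinct fibres. One last nuance: an end lying on an edge of negative sign lifts to the fibre with reversed orientation, so your claim that every end carries the \emph{natural} brane structure is not quite right; but Corollary \ref{cor:no_cobordisms} only requires the underlying Lagrangians at the ends to be pairwise distinct fibres, so the conclusion is unaffected.
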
 

Recall that $\Cob_{\fib}^\unob(X(B)/\C^*)$ denotes the free abelian group generated by torus fibres of $X(B) \to B$, equipped with their natural structure of unobstructed Lagrangian branes, modulo relations coming from cylindrical unobstructed Lagrangian brane cobordisms all of whose ends are fibres.

\begin{conj} \label{conj:free} If $B$ contains no tropical curve, then $\Cob_{\fib}^\unob(X(B)/\C^*) \simeq \Z^{B}$.
\end{conj}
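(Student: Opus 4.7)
The plan is to extend Corollary \ref{cor:no_cobordisms} to allow cobordisms whose fibre ends have arbitrary multiplicity; granting this, the composition $\Z^B \twoheadrightarrow \Cob_{\fib}^{\unob}(X(B)/\C^*)$ has trivial kernel and the conjecture follows. Concretely, let $V \subset \C^* \times X(B)$ be an unobstructed cylindrical Lagrangian brane cobordism with fibre ends $\bL^- = (F_{a_1},\ldots,F_{a_r})$ and $\bL^+ = (F_{b_1},\ldots,F_{b_s})$ (now allowing repetitions within either tuple). I would aim to show that the multisets $\{a_i\}$ and $\{b_j\}$ coincide, so that $V$ induces only the tautological relation.

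First I would dispose of the case where the ends of $V$ are \emph{separable} in the sense of Section \ref{Sec:separable}. In this case Corollary \ref{cor:separable_tropical} produces a tropical curve $C \subset \R \times B$ whose ends (with their weights) are precisely those of $V$. Since $B$ admits no tropical curve, the projection of each connected component of $C$ to $B$ must be constant: indeed this projection is a balanced, proper tropical map to $B$, and lifting it to the universal cover $\R^n \to B$ yields a compact tropical graph in $\R^n$, which collapses to a point by the balancing argument used in the proof of Lemma \ref{lem:nocurve}. Consequently $C$ is a disjoint union of cylinders $\R \times \{b\}$ matching negative to positive ends with equal base values, so the multisets agree and the induced relation is trivial.

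The main obstacle is the non-separable case, in which some repeated end has distinct copies with identical restriction $\iota_j^*: H^1(V;\R) \to H^1(F_b;\R)$. Two strategies suggest themselves. One is topological: enlarge $V$ by attaching Lagrangian one-handles between offending pairs of ends, or take the disjoint union of $V$ with a simple compact auxiliary Lagrangian, in order to enrich $H^1(V;\R)$ until the ends become separable; the delicate point is preserving unobstructedness under such modifications (cf.\ Remark \ref{rmk:no_taut}). The second is analytic: upgrade the construction of Proposition \ref{prop:support_is_analytic} to a family Floer sheaf over $\mathbb{G}_m \times Y(B) \times U$, where $U$ parametrizes a richer class of deformations of $V$ (most naturally bounding cochains, which provide precisely the extra directions missing in Remark \ref{rmk:common_ends}), and then run the tropicalization argument of Corollary \ref{cor:separable_tropical} to extract a curve from irreducible components of $\scrE_V$ of the expected dimension.

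I expect the chief difficulty to lie in supplying enough deformations to separate the ends while preserving unobstructedness. As spelled out in Remark \ref{rmk:common_ends}, over a fibre of multiplicity $k$ the obstruction space $CF^1(F_{\hat b},V)$ acquires $k$ independent copies of $H^1(F_b)$, only $n$ of which can be killed by varying the $U_\Lambda$-holonomies of the local system; the flux of $V$ contributes extra directions exactly when the ends are separable, which is the hypothesis that fails. Overcoming this would plausibly require passing to the Floer-theoretically unobstructed setting of Remark \ref{rmk:samegr}, where bounding cochains on $V$ provide the missing transverse directions in the family Floer construction, and then checking that the tropicalization step of Corollary \ref{cor:separable_tropical} goes through verbatim.
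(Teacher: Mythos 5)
The statement you are proving is labelled a \emph{conjecture} in the paper: the authors offer no proof of it, only the remark that it is consistent with mirror-symmetry expectations (Corollary \ref{cor:Chow_free} and Section \ref{Sec:compactification}). So there is no "paper's proof" to compare against, and your proposal should be judged as an attempt on an open problem. As such it is a reasonable outline but not a proof. The separable case you dispose of correctly, and essentially by the paper's own route: Corollary \ref{cor:separable_tropical} yields a tropical curve in $\R\times B$ with the prescribed weighted ends, its projection to $B$ is balanced and hence (absent tropical curves in $B$) zero-dimensional, so the curve is a union of weighted cylinders $\R\times\{b\}$ and the induced cobordism relation is tautological. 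This is the argument behind the unnumbered Proposition preceding Corollary \ref{cor:no_cobordisms}, combined with the projection argument of Corollary \ref{cor:Chow_free}.

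The genuine gap is exactly where you place it: cobordisms with non-separable repeated ends. Neither of your two strategies is carried out. The topological one (handle attachment or disjoint union to enrich $H^1(V;\R)$) founders on preserving tautological unobstructedness, as you acknowledge and as Remark \ref{rmk:no_taut} warns; moreover surgery changes the ends and hence the relation being induced, so one would also need to control that. The analytic one (deforming by bounding cochains to supply the missing transverse directions in $CF^1(F_{\hat b},V)$ identified in Remark \ref{rmk:common_ends}) requires leaving the tautologically unobstructed framework entirely, per Remark \ref{rmk:samegr}, and then re-establishing analyticity of the enlarged family Floer support and the tropicalization of its one-dimensional components --- none of which is routine. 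Since the conclusion you need is precisely that \emph{every} unobstructed fibre-ended cobordism induces the trivial relation, and the non-separable case is untouched, the proposal does not establish the conjecture; it reproduces the state of knowledge in the paper, where Theorem \ref{thm:none} covers only pairwise distinct (hence multiplicity-one, hence separable) ends.
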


The conjecture is consistent with expectations from mirror symmetry, cf. Corollary \ref{cor:Chow_free} and Section \ref{Sec:cobrat}.

\section{Rational equivalence} 
\label{Sec:cob_k}

This section outlines a relationship between cylindrical Lagrangian cobordism in $X(B)$ and rational equivalence of cycles in the mirror $Y(B)$.  Whilst these results provide motivation and context for our main results, they are not required for the proofs, so our treatment will be somewhat less detailed than in earlier sections.

\subsection{K-theory of the Fukaya category}

Recall that the Grothendieck group of a triangulated category is the free abelian group generated by its objects, modulo relations $[C] = [B] - [A]$ whenever $C \simeq Cone(A \to B)$. 
If $\scrA$ is a triangulated $A_\infty$ category then $H^0(\scrA)$ is a triangulated category (see \cite[Chapter 3]{Seidel:FCPLT}), so we can define $K(\scrA) := K\left(H^0(\scrA)\right)$.
Given an $A_\infty$ category $\scrA$, there are various natural formal enlargements of it which are triangulated (cf. \cite[Section 1]{Seidel:Flux} and \cite[Lecture 7]{Seidel:Cat_Dyn}):
\[ \scrA^{tw} \subset \scrA^{perf} \subset \scrA^{mod} \supset \scrA^{prop}.\]
The `smallest' is the category of twisted complexes $\scrA^{tw}$, after which we have its split closure $\scrA^{perf}$. 
These both sit inside the category $\scrA^{mod}$ of left $A_\infty$ $\scrA$-modules. 
Also sitting inside $\scrA^{mod}$ we have the subcategory $\scrA^{prop}$ of proper $\scrA$-modules, i.e., those with finite-dimensional cohomology for each object. 
The inclusion functors between these categories induce maps between the corresponding Grothendieck groups
\[ K\left(\scrA^{tw}\right) \hookrightarrow K\left(\scrA^{perf}\right) \to K\left(\scrA^{mod}\right) \leftarrow K\left(\scrA^{prop}\right).\]
We note that the leftmost arrow is an injection by \cite[Corollary 2.3]{Thomason1997}.

If $\scrA$ is itself proper (i.e., all its $hom$-spaces have finite-dimensional cohomology) then $\scrA^{perf} \subset \scrA^{prop}$. 
If $\scrA$ is smooth (i.e., the diagonal bimodule is perfect) then $\scrA^{prop} \subset \scrA^{perf}$. 
In particular, if $\scrA$ is saturated (i.e., both proper and smooth) then $\scrA^{prop} = \scrA^{perf}$. 

Now let $(X,\omega)$ be a closed symplectic manifold equipped with an $\infty$-fold Maslov cover. The Fukaya category $\scrF(X)$ is a $\Z$-graded $A_{\infty}$ category linear over the Novikov field, whose objects are unobstructed Lagrangian branes. 
It is proper (since two Lagrangian submanifolds generically intersect in a finite number of points), and in good situations it is expected that $\scrF(X)^{perf}$ should furthermore be smooth. 
For example, if $X$ admits a homological mirror $Y$ which is smooth and proper, then $\scrF(X)^{perf} \simeq \EuD (Y)$ is smooth and proper (here `$\EuD(Y)$' denotes a dg enhancement for the bounded derived category of coherent sheaves on $Y$). 
In this case we have associated Grothendieck groups
\[ K\left(\scrF(X)^{tw}\right) \subset K\left(\scrF(X)^{perf} \right) = K\left(\scrF(X)^{prop}\right) .\]

\subsection{K-theory relations from planar cobordisms}\label{Sec:K0_planar}

Biran and Cornea have proved that planar Lagrangian brane cobordisms induce relations between their ends in the Grothendieck group of the Fukaya category. 
In particular,

\begin{thm}[Biran-Cornea]\label{thm:BC-K0-rel}
There is a surjective homomorphism
\begin{align*}
\Cob^\unob(X/\C) & \to K\left(\scrF(X)^{tw}\right), \quad \text{sending} \\
[L] & \mapsto [L].
\end{align*}
\end{thm}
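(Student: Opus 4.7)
The plan is to construct the map on generators by sending $[L] \mapsto [L]$ viewed as an object of $\scrF(X) \subset \scrF(X)^{tw}$, and then verify that every cobordism relation becomes trivial in $K(\scrF(X)^{tw})$. Following the strategy of Biran--Cornea, the key is to realise each unobstructed planar Lagrangian brane cobordism $V \subset \C \times X$ as an object of a partially wrapped Fukaya category $\scrW_{\sigma}(\C \times X)$ of the kind already referenced in Section \ref{Sec:Cob_Rat_Eq}, with one stop on each component of the ideal boundary of $\C$, and then exhibit $V$ as an iterated mapping cone whose associated graded pieces are (shifted copies of) its ends.

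First I would set up the Floer theory of $V$ against test objects of the form $\gamma \times K$, where $\gamma \subset \C$ is an arc asymptotic to horizontal rays at $\pm \infty$ avoiding the heights of $V$, and $K \subset X$ is a closed unobstructed Lagrangian brane. Using the projection to $\C$, the open mapping theorem, and the unobstructedness of $V$ and $K$ to rule out disc and sphere bubbling (as in the proof of Lemma \ref{lem:cob_floer}, cf. also \cite{Seidel:Flux}), one obtains a well-defined Floer cohomology $HF^*(V,\gamma \times K)$, invariant under compactly supported Hamiltonian isotopy of $\gamma$ and of $K$. The crucial computational input is the behaviour under isotopies of $\gamma$ that sweep its horizontal asymptotes past a height $x_i^{\pm}$ of $V$: such an isotopy fits into an exact triangle whose third vertex is $HF^*(L_i^\pm,K)$ (up to grading shift determined by the direction of the sweep), because when $\gamma$ degenerates to a small arc through $x_i^\pm$ the open mapping theorem localises the strip count to a neighbourhood of that end and reduces it to the Floer complex of $L_i^\pm$ with $K$.

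Assembling these height-crossing exact triangles as one sweeps a test arc across all negative or all positive heights of $V$ produces, in $H^0(\scrW_{\sigma}(\C\times X))$, an iterated mapping cone presentation of $V$ whose pieces are the ends, regarded via their cylindrical lifts. Lifting this from $H^0$ to an honest twisted complex uses the $A_\infty$ formalism in the standard way; restricting Yoneda modules along the inclusion $\scrF(X) \hookrightarrow \scrW_\sigma(\C \times X)$ (valid because the ends are products $\R_\pm \times L_i^\pm$) then yields a twisted complex in $\scrF(X)^{tw}$ with underlying objects the $L_i^\pm$ and total object quasi-isomorphic to $V$. Passing to $K$-theory collapses this iterated cone to an alternating sum, which, combined with the fact that the elementary cobordism realising $L[1] = -L$ in $\Cob^{\unob}(X/\C)$ (as in Section \ref{Sec:lag_cobordism}) matches the Grothendieck group identity $[L[1]] = -[L]$, forces the cobordism relation $\sum [L_i^-] = \sum [L_j^+]$ to hold in $K(\scrF(X)^{tw})$. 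This gives a well-defined homomorphism, and surjectivity is automatic: $K(\scrF(X)^{tw})$ is generated by classes of Lagrangian branes $[L]$, each tautologically in the image of the map.

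The main obstacle will be upgrading the iterated cone decomposition of $V$ from a cohomology-level statement to a twisted-complex quasi-isomorphism in $\scrF(X)^{tw}$ itself, rather than merely in the ambient module category of $\scrW_{\sigma}(\C \times X)$. This requires checking the $A_\infty$-naturality of the identifications $HF^*(V, \gamma \times K) \cong HF^{*+d_i}(L_i^\pm, K)$, together with compactness and transversality for the higher structure maps of the cobordism Fukaya category. In the tautologically unobstructed setting these issues are tractable because all Floer-theoretic moduli spaces are defined genuinely and the relevant energy and degeneration bounds reduce, via projection to $\C$, to those in the standard Biran--Cornea setup; a subtler variant of the argument would be needed if one worked with Floer-theoretically unobstructed cobordisms equipped with bounding cochains, as envisaged in Remark \ref{rmk:samegr}.
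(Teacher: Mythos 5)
Your proposal follows essentially the same route as the paper, which does not prove this theorem itself but defers to Biran--Cornea and sketches precisely the mechanism you describe: sweeping test objects $\gamma \times K$ past the heights of the ends, using the open mapping theorem on the projection to $\C$ to localise strips, and thereby producing an iterated cone decomposition of $V$ in $\scrF(X)^{tw}$ with pieces the $L_i^\pm$, whose image in the Grothendieck group is the cobordism relation. Your closing caveat about upgrading the cohomology-level triangles to a genuine twisted complex in the tautologically unobstructed graded setting matches the paper's own remark that Biran--Cornea strictly prove this only for ungraded monotone cobordisms and that ``the underlying geometric arguments should apply with minor modifications to the present case.''
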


Strictly, \cite{Biran-Cornea-2} proves this result for ungraded monotone cobordisms of monotone Lagrangians, but the underlying geometric arguments should apply with minor modifications to the present case. 

In fact, Biran and Cornea prove something stronger than Theorem \ref{thm:BC-K0-rel}. 
Observing that any positive end in a planar cobordism can be `turned around' into a negative end, they restrict their attention to cobordisms with a single positive end $L^+$ and negative ends $L_1^-,\ldots,L_n^-$. 
They prove that such a cobordism induces an iterated cone decomposition
\begin{equation}\label{eqn:itcone}
 L^+ \simeq L_1^- \to \left(L_2^- \to \left(L_3^- \to \ldots \right) \right)
 \end{equation}
in $\scrF(X)^{tw}$ \cite[Theorem A]{Biran-Cornea-2}. 
We feel it may be useful to the reader to see a sketch proof of this result in the language of `partially wrapped' Fukaya categories \cite{Sylvan}, since it will help to understand the difference between planar and cylindrical cobordism. 

Consider the Liouville manifold $\C$, compactified by an ideal boundary to a disc, and let $\sigma = \{\pm i \infty\}$ be a set of two `stops' on the ideal boundary. 
Unobstructed planar cobordisms define objects of the partially wrapped Fukaya category $\scrW(\C \times X,\sigma \times X)$. 
The latter can be defined by combining Seidel's approach to defining Floer cohomology for unobstructed Lagrangian branes \cite{Seidel:Flux} with any of the approaches to defining partially wrapped Fukaya categories \cite{Biran-Cornea,Sylvan,Seidel2018,Ganatra2017}. 
In our conventions, the $hom$-spaces in the partially wrapped Fukaya category are $hom^*(V,V') := CF^*(\phi_\varepsilon(V),V')$, where the flow $\phi_{\varepsilon}$ wraps $V$ towards the stops at the boundary (but not past them), and is chosen so as to shift all heights of $V$ past those of $V'$, in particular removing intersections near infinity. 
In the case that $V' = K$ is closed, $HF^*(\phi_{\varepsilon}(V),K) \simeq HF^*(V,K)$ coincides with the Floer cohomology group defined in Lemma \ref{lem:cob_floer}, and is independent of conventions for wrapping at infinity.

By the K\"unneth theorem for partially wrapped Fukaya categories \cite[Theorem 1.5]{Ganatra2018}, there is an $A_\infty$ bifunctor
\begin{align*}
 i:\scrW(\C,\sigma) \otimes \scrF(X) & \hookrightarrow \scrW(\C \times X,\sigma \times X)\\
 i(\gamma, K) &\coloneqq \gamma \times K
 \end{align*}
which is an embedding on the level of cohomology. 
Thus, given an object $\gamma$ of $\scrW(\C,\sigma)$ and a planar cobordism $V$ defining an object of $\scrW(\C \times X,\sigma \times X)$, we obtain an $A_\infty$ functor to the category of chain complexes as the composition:
\[ \scrF(X) \xrightarrow{i(\gamma,-)} \scrW(\C \times X, \sigma \times X) \xrightarrow{hom^*(-,V)} \mathsf{Ch}.\]
This defines an $A_\infty$ module over $\scrF(X)$, which we denote by $\scrY(\gamma,V) \in \ob \scrF(X)^{mod}$.

Let us take $\gamma = \R$ to be the real axis in $\C$. 
We choose sufficiently wrapped Hamiltonian-isotopic paths $\gamma', \gamma''$ as in Figure \ref{Fig:K0_relation_over_C}, giving us two cochain-level models for the module $\scrY(\gamma,V)$:
\[ \scrY(\gamma,V)(K) \simeq CF^*(\gamma' \times K,V) \simeq CF^*(\gamma'' \times K,V)\]
(these two models are quasi-isomorphic by the Hamiltonian isotopy invariance of Floer cohomology).%, and both of these are isomorphic to $\Hom^*_{\scrW(\C \times X,\sigma \times X)}(i(\gamma,K),V)$. 
 
One then observes that there is an isomorphism of modules $CF^*(\gamma'' \times K,V) \simeq CF^*(K,L^+)$: all intersection points between $\gamma'' \times K$ and $V$ lie in a single fibre of the projection $\C \times X \to \C$, and all holomorphic discs must lie in the same fibre by the open mapping theorem applied to the projection to $\C$. 
There is a similar isomorphism of vector spaces $CF^*(\gamma' \times K,V) \simeq \oplus_j CF^*(K,L_j^-)$, but the structure maps need no longer count holomorphic discs inside a single fibre: nevertheless one can prove that they are lower-triangular, by considering the projections of the holomorphic discs to $\C$ as in the proof of Proposition \ref{prop:support_not_empty}. 
For example, the holomorphic strips contributing to the component of the differential mapping $CF^*(\gamma' \times K,L_1^-) \to CF^*(\gamma' \times K,L_2^-)$ will project to the unique compact region bounded by $\gamma'$ and the projection of $V$ in Figure \ref{Fig:K0_relation_over_C}.
It follows that we have the desired quasi-isomorphism of Yoneda modules
\[ \scrY(L^+) \simeq \scrY(\gamma,V) \simeq \scrY\left(L_1^-\right) \to \left(\scrY\left(L_2^-\right) \to \left(\scrY\left(L_3^-\right) \to \ldots \right) \right).\]

\begin{figure}[ht]
\begin{center} 
\begin{tikzpicture}
		\draw (-0.95,1.5) node {$L_1^-$};
		\draw (-0.95,0.9) node {$L_2^-$};

		\draw (6.2,1.7) node {$L^+$};
		\draw[fill, color=red] (1,0.2) circle  (0.05);

	\draw[fill] (1,0.9) circle  (0.07);
	\draw[dashed,color=gray] [ ->] (5.6,0) -- (4.6,1.6) ;
	\draw (6,-0.4) node {$CF^*(K, L^+)$};
	\draw[dashed,color=gray] [ ->] (-0.4,3) -- (0.85,1.6) ;
	\draw (-0.5,3.4) node {$CF^*(K, L_1^-)$};
		\draw[fill] (1,1.5) circle  (0.07);
	\draw[fill] (4.5,1.7) circle (0.07);
	\draw (3,3.2) node {$\gamma' \times K$};
	\draw (3,-0.2) node {$\gamma'' \times K$};
	%\draw[semithick, color=red, rounded corners] (-0.48,2.7) -- (4.5,2.7) -- (4.5, 0.5) -- (5.7, 0.5);
	%\draw[semithick, color=red, rounded corners] (-0.48,2.85) -- (1,2.85) -- (1,0.3) -- (5.7, 0.3);
	\draw[semithick, color=red, rounded corners] (-0.48,0.2) -- (4.5,0.2) -- (4.5, 2.7) -- (5.7, 2.7);
	\draw[semithick, color=red, rounded corners] (-0.48,-0.1) -- (1,-0.1) -- (1,2.85) -- (5.7, 2.85);
	\draw[semithick] (-0.48,1.5) -- (2,1.5);
	\draw[semithick] (-0.48,0.9) -- (2.2,0.9);
	\draw[semithick] (3.97,1.7) -- (5.7,1.7);
	\draw[semithick] (3,1.5) circle(1cm) node{$V$};
	\draw[thick,arrows=->] (-2.6,3.5) -- (-2.6,-.5); 
	\draw (-3,1.5) node {$\phi_{t}$};
	\draw[thick,arrows=->] (7.7,-.5) -- (7.7,3.5); 
	\draw (8,1.5) node {$\phi_{t}$};

\end{tikzpicture}
\end{center}
\caption{Consequences of the cone decomposition for Lagrangian Floer cohomology.}
\label{Fig:K0_relation_over_C}
\end{figure}
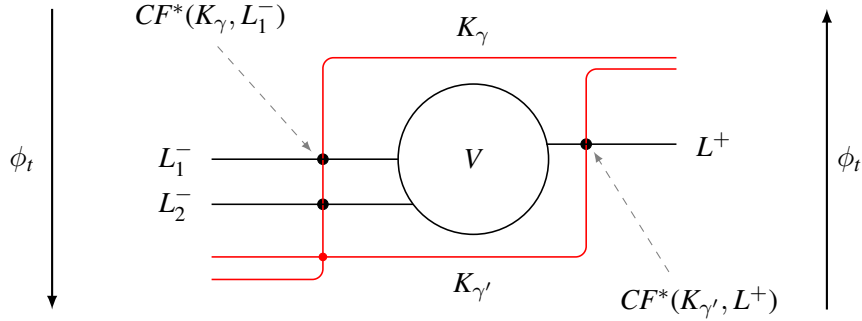

\subsection{K-theory relations from cylindrical cobordisms \label{Sec:K0_cyl}}

We consider the Liouville manifold $\C^*$, compactified by an ideal boundary to an annulus, and let $\sigma$ consist of a single stop on each ideal boundary component, located on the negative real locus. 
Unobstructed cylindrical cobordisms define objects of the partially wrapped Fukaya category $\scrW(\C^* \times X, \sigma \times X)$. 
As before, an object $\gamma$ of $\scrW(\C^* \times X,\sigma \times X)$ together with a cylindrical cobordism $V$ determines a module $\scrY(\gamma,V) \in \ob \scrF(X)^{mod}$. 
In fact $\scrY(\gamma,V) \in \ob \scrF(X)^{prop}$, because $\varphi_\epsilon(\gamma \times K) \cap V$ is generically finite.

The arguments sketched in the previous section require some modification. 
For example, if we take $\gamma$ to be the positive real locus, and consider sufficiently wrapped Hamiltonian-isotopic paths $\gamma',\gamma''$ as in Figure \ref{Fig:K0_relation_over_Cstar}, then there are some extra intersection points between $\gamma' \times K$ and $V$ arising from the region where $V$ wraps around the back of the cylinder.
Nevertheless we have:

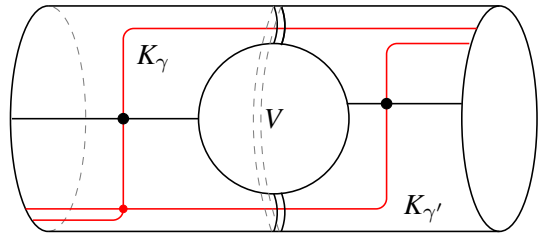
\begin{figure}[ht]
\begin{center} 
\begin{tikzpicture}
	\draw[dashed,color=gray] (0,0) arc (-90:90:0.5 and 1.5);% right half of the left ellipse
	\draw[semithick] (0,0) -- (6,0);% bottom line
	\draw[semithick] (0,3) -- (6,3);% top line
	\draw[semithick] (0,0) arc (270:90:0.5 and 1.5);% left half of the left ellipse
	\draw[semithick] (6,1.5) ellipse (0.5 and 1.5);% right ellipse
	%\draw (0.03,1.8) node {$\mathbb{L}^-$};
	%\draw (4.8,2.5) node {$K$};
	\draw[semithick, color=red, rounded corners] (-0.3,0.3) -- (4.5,0.3) -- (4.5, 2.5) -- (5.6, 2.5);
	\draw (5.05,0.25) node {$\gamma'' \times K$};
	\draw[semithick, color=red, rounded corners] (-0.2,0.15) -- (1,0.15) -- (1,2.7) -- (5.7, 2.7);
	\draw (1.63,2.4) node {$\gamma' \times K$};
	%\draw (1.2,0.4) node {$e$};
	%\draw[semithick, color=red, rounded corners] (-0.33,2.7) -- (4.5,2.7) -- (4.5, 0.5) -- (5.65, 0.5);
	%\draw[semithick, color=red, rounded corners] (-0.26,2.85) -- (1,2.85) -- (1,0.3) -- (5.7, 0.3);
	%\draw (0.8,2.5) node {$e$};
			\draw[fill, color=red] (1,0.3) circle  (0.05);
		\draw[fill] (1,1.5) circle  (0.07);
	\draw[fill] (4.5,1.7) circle (0.07);
	%\draw (1.2,0.4) node {$e$};
	%\draw[semithick] (-0.45,2.1) -- (2.2,2.1);
	\draw[semithick] (-0.48,1.5) -- (2,1.5);
	%\draw[semithick] (-0.45,0.9) -- (2.2,0.9);
	\draw[semithick] (3.97,1.7) -- (5.5,1.7);
	\draw[semithick] (3,1.5) circle(1cm) node{$V$};
	\draw[semithick]  (3,2.5) arc (-21:21:0.7) ;
	\draw[semithick]  (3.1,2.48) arc (-21:21:0.72) ;
	\draw[semithick]  (3.1,2.48) arc (-21:21:0.72) ;
	\draw[semithick]  (3.1,0) arc (-21:21:0.72);
	\draw[semithick]  (3,0) arc (-21:21:0.7) ;
	\draw[dashed,color=gray] (3.1,3) arc (160:200:4.5) ;

	\draw[dashed,color=gray] (3,3) arc (160:200:4.5) ;
	
\end{tikzpicture}
\end{center}
\caption{Extra intersection points between $\gamma \times K$ and $V$.}
\label{Fig:K0_relation_over_Cstar}
\end{figure}

\begin{figure}[ht]
\begin{center} 
\includegraphics[width=0.6\textwidth]{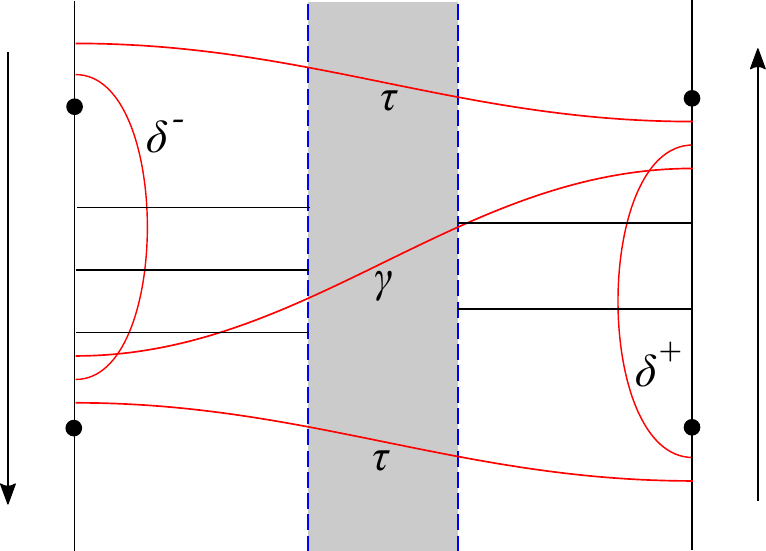}
\end{center}
\caption{Objects of $\scrW(\C^*,\sigma)$, illustrated in the universal cover. Stops are illustrated with solid dots, and the direction of the Reeb flow on the boundary components is illustrated with an arrow. The pullback of the projection of $V$ is contained in the shaded region together with the `legs' corresponding to cylindrical ends. The pullbacks of the objects $\gamma, \tau, \delta^\pm$ of $\scrW(\C^*,\sigma)$ are illustrated in red.}
\label{Fig:PW_objects}
\end{figure}

\begin{prop}\label{prop:cob_rel_groth}
There is a well-defined map
\begin{align*}
\Cob^\unob(X/\C^*) &\to K\left(\scrF(X)^{prop}\right), \quad\text{ sending} \\
[L] & \mapsto \left[\scrY(L)\right]
\end{align*} 
where $\scrY$ denotes the Yoneda embedding. 
\end{prop}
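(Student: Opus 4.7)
I would follow the strategy of Biran--Cornea sketched in Section \ref{Sec:K0_planar}, suitably adapted to the cylindrical setting: assign to each cobordism $V$ a proper $A_\infty$-module $\scrM_V$ over $\scrF(X)$ admitting two iterated cone decompositions, one in terms of the negative ends and one in terms of the positive ends; equating the two resulting Grothendieck classes then produces the cobordism relation.

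\smallskip

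\emph{Step 1 (Construction).} Fix a generic properly embedded radial arc $\gamma\subset\C^*$ connecting the two boundary circles, whose argument differs from those of all ends of $V$. For each $K\in\scrF(X)$, set $\scrM_V(K):=CF^*(K_\gamma,V)$, with $K_\gamma:=\gamma\times K$ carrying the product brane structure; higher $A_\infty$-module operations should count pseudoholomorphic polygons with boundary alternating between $V$ and $K_\gamma$-type Lagrangians. Unobstructedness of $V$ and of the inputs, combined with the open mapping theorem applied to the projection to $\C^*$, rules out disc and sphere bubbling and controls Gromov compactness as in Lemma~\ref{lem:cob_floer}. Because $\gamma$ avoids the arguments of the ends, the intersection $(\gamma\times K)\cap V$ lies in a compact piece of $\C^*\times X$ and is finite by transversality, so $\scrM_V\in\scrF(X)^{prop}$.

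\smallskip

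\emph{Step 2 (Two decompositions).} Next, I would isotope $\gamma$ via the Reeb flow near the $0$-end of $\C^*$ to a tiny arc $\gamma^-$ disjoint from $V$; this isotopy sweeps past each radial ray $\ell_i^-$ carrying a negative end, and at each crossing introduces a block of intersection points in bijection with $K\cap L_i^-$. The open mapping argument (cf.~\cite[Sec.~4]{Biran-Cornea}) then forces the Floer differential on $CF^*(K_{\gamma^-},V)$ to be upper-triangular with respect to the filtration by order of crossing, giving an iterated cone decomposition of $\scrM_V^\gamma$ whose graded pieces are Yoneda modules $\scrY(L_i^-)$ (with appropriate degree shifts). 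The analogous wrapping toward $\infty$ yields a second iterated cone decomposition in terms of $\scrY(L_j^+)$.

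\smallskip

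\emph{Step 3 (Conclusion and main obstacle).} Equating the two decompositions in $K(\scrF(X)^{prop})$ yields
\[
\sum_i [\scrY(L_i^-)] \;=\; [\scrM_V] \;=\; \sum_j [\scrY(L_j^+)],
\]
up to standard signs coming from the brane gradings, so the assignment $[L]\mapsto[\scrY(L)]$ descends to the desired map on $\Cob^\unob(X/\C^*)$. The hard part will be the flux tracking in the wrapping isotopy $\gamma\rightsquigarrow\gamma^\pm$: this isotopy is not compactly supported in $\C^*$ and has non-zero flux around the generator of $H_1(\C^*)$, so $K_\gamma$ and $K_{\gamma^\pm}$ are not isomorphic objects of any compact Fukaya category. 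To make the iterated cone decomposition rigorous one must work inside a partially wrapped Fukaya category of $\C^*$ with one stop on each boundary circle (in the sense of Sylvan \cite{Sylvan} and Section \ref{Sec:K0_planar}) and keep track of the Novikov factors contributed by the wrapping continuation maps; these become units in the Grothendieck group and therefore do not affect the final relation. This flux-tracking, together with the standard transversality and compactness analysis for unobstructed Lagrangian branes, will constitute the bulk of the technical work.
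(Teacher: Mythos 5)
Your plan is essentially the Biran--Cornea planar argument transplanted to the cylinder, and it breaks at exactly the point the paper flags at the start of Section \ref{Sec:K0_cyl}: the existence of \emph{extra} intersection points between $K_\gamma$ and $V$ (Figure \ref{Fig:K0_relation_over_Cstar}). A cylindrical cobordism that is not a product has a projection to $\C^*$ whose compact part wraps around the cylinder, so any properly embedded arc $\gamma$ joining the two ideal boundary circles must cross it in points that do not lie over any end of $V$. Consequently $CF^*(K_{\gamma^\pm},V)$ is \emph{not} isomorphic as a filtered complex to $\bigoplus_i CF^*(K,L_i^\pm)$: your Step 2 claim that $\gamma$ can be pushed to an arc ``disjoint from $V$'' near one end, and that the sweep back across the legs produces an iterated cone whose graded pieces are exactly the Yoneda modules $\scrY(L_i^\pm)$, is false on the cylinder. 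Without both cone decompositions the central identity $\sum_i[\scrY(L_i^-)]=[\scrM_V]=\sum_j[\scrY(L_j^+)]$ is unjustified. The difficulty you single out as ``the hard part'' (flux of the wrapping isotopy) is comparatively harmless --- rescaling by Novikov units does not change isomorphism classes, and passing to $\scrW_\sigma(\C^*)$ does not remove the interior intersections.

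The paper's proof is structured precisely to sidestep this. It places a stop between each pair of consecutive ends and replaces your single long arc by the short boundary-parallel arcs $\delta^\pm_{i-1,i}$, each of which crosses exactly one leg of $V$ and avoids the compact region, so that the open mapping theorem gives $[\delta^\pm_{i-1,i}]\otimes[V]\mapsto[\scrY(L_i^\pm)]$ cleanly (Lemma \ref{lem:delta_yon}). The relation $\sum_{i=1}^m[\delta^-_{i-1,i}]=\sum_{i=1}^n[\delta^+_{i-1,i}]$ is then proved entirely in $K(\scrW_\sigma(\C^*))$ of the base, using the through-going arcs $\tau_{i,j}$, the cone relations among them, and the monodromy identity $\tau_{i+m,j}=\tau_{i,j+n}$ (Lemma \ref{lem:PW_rel_delta}); finally this relation is transported to $K(\scrF(X)^{prop})$ by tensoring with $[V]$ under a K\"unneth bifunctor (Lemma \ref{lem:k0map}). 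If you wish to salvage a module-theoretic argument along your lines, you would at minimum have to analyse the contributions of the interior intersection points to the differential on $CF^*(K_\gamma,V)$, which is exactly the analysis the paper's detour through $K(\scrW_\sigma(\C^*))$ is designed to avoid.
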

\begin{proof}
Consider the objects $\gamma,\tau,\delta^\pm$ of $\scrW(\C^*,\sigma)$ whose pullbacks to the universal cover are illustrated in Figure \ref{Fig:PW_objects}. 
Our previous argument shows that there are quasi-isomorphisms
\begin{equation}
\label{eqn:triangles}
 \scrY(\delta^\pm,V) \simeq \scrY\left(L_1^\pm\right) \to \left(\scrY\left(L_2^\pm\right) \to \left(\scrY\left(L_3^\pm\right) \to \ldots \right) \right).
 \end{equation}
We observe that there are quasi-isomorphisms 
\[ \left(\delta^- \to \gamma \right) \simeq \tau \simeq \left(\delta^+ \to \gamma \right)\]
in $\scrW(\C^*,\sigma)$. 
It follows that there is a quasi-isomorphism
\[ \left(\scrY(\delta^-,V) \to \scrY(\gamma,V)\right) \simeq \scrY(\tau,V) \simeq \left(\scrY(\delta^+) \to \scrY(\gamma,V) \right)\]
in $\scrF(X)^{prop}$. 
Hence we have
\[ \left[ \scrY(\delta^-,V) \right] = \left[ \scrY(\tau,V) \right] - \left[ \scrY(\gamma,V) \right] = \left[\scrY(\delta^+,V)\right]\]
in the Grothendieck group. 
Combining with \eqref{eqn:triangles} shows that
\[ \sum_i \scrY(L_i^-) = \sum_i \scrY(L_i^+) \]
in $K(\scrF(X)^{prop})$, as required.
\end{proof}

\begin{cor}
If $\scrF(X)^{perf}$ is homologically smooth (e.g., if $X$ admits a homological mirror), then we have a surjective homomorphism
\begin{align*}
\Cob^\unob(X/\C^*) & \to K\left(\scrF(X)^{tw}\right), \quad \text{sending} \\
[L] & \mapsto [L].
\end{align*}
\end{cor}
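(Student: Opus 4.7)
The plan is to combine Proposition \ref{prop:cob_rel_groth} with the smoothness hypothesis and Thomason's injectivity theorem to lift the existing map to $K(\scrF(X)^{prop})$ through the injection $K(\scrF(X)^{tw}) \hookrightarrow K(\scrF(X)^{perf})$.

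First I would invoke Proposition \ref{prop:cob_rel_groth} to obtain the well-defined homomorphism
\[ \Cob^\unob(X/\C^*) \to K\left(\scrF(X)^{prop}\right), \quad [L] \mapsto [\scrY(L)].\]
Since $\scrF(X)$ is automatically proper (the Fukaya category of a closed symplectic manifold has finite-dimensional morphism spaces after passing to cohomology), we have the inclusion $\scrF(X)^{perf} \subset \scrF(X)^{prop}$. The smoothness hypothesis gives the reverse inclusion $\scrF(X)^{prop} \subset \scrF(X)^{perf}$, so the two subcategories of $\scrF(X)^{mod}$ coincide, and the map above lands in $K(\scrF(X)^{perf})$.

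Next, observe that for any Lagrangian brane $L$ the Yoneda image $\scrY(L)$ already lies in the essential image of the inclusion $\scrF(X)^{tw} \hookrightarrow \scrF(X)^{perf}$: in fact it comes from the inclusion $\scrF(X) \hookrightarrow \scrF(X)^{tw}$ of the Fukaya category into its pretriangulated envelope. By Thomason's theorem, cited in the excerpt via \cite[Corollary 2.3]{Thomason1997}, the induced K-theory map $K(\scrF(X)^{tw}) \hookrightarrow K(\scrF(X)^{perf})$ is injective. Consequently, any $\Z$-linear relation among classes $[\scrY(L_i)]$ that holds in $K(\scrF(X)^{perf})$ already holds among the corresponding classes $[L_i]$ in $K(\scrF(X)^{tw})$. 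This uniquely lifts the homomorphism through the injection, yielding the desired
\[ \Cob^\unob(X/\C^*) \to K(\scrF(X)^{tw}), \quad [L] \mapsto [L].\]

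Surjectivity is then formal. By definition every object of $\scrF(X)^{tw}$ is a twisted complex built from iterated cones on shifted Lagrangian branes, so applying the triangle relations and the identity $[L[1]] = -[L]$ expresses the class of any twisted complex as a $\Z$-linear combination of classes of Lagrangian branes. Each such generator $[L]$ is tautologically in the image of our map, so the map is surjective. The only non-routine input is the Thomason injectivity, which is precisely what makes the lift well-defined; once that is in hand no further symplectic-topological argument is required beyond what is already contained in Proposition \ref{prop:cob_rel_groth}.
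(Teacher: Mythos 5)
Your proposal is correct and follows essentially the same route as the paper: invoke Proposition \ref{prop:cob_rel_groth}, use properness of $\scrF(X)$ together with the smoothness hypothesis to identify $K(\scrF(X)^{prop})$ with $K(\scrF(X)^{perf})$, and then lift through the Thomason injection $K(\scrF(X)^{tw}) \hookrightarrow K(\scrF(X)^{perf})$. The only difference is that you spell out the (routine) surjectivity and lifting steps which the paper leaves implicit.
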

\begin{proof}
This is immediate from the combination of Proposition \ref{prop:cob_rel_groth} with the fact that the map
\begin{align*}
K\left(\scrF(X)^{tw}\right) &\to K\left(\scrF(X)^{prop}\right) =  K\left(\scrF(X)^{perf}\right) \\
[L] & \mapsto \left[\scrY(L)\right]
\end{align*}
is injective when $\scrF(X)^{perf}$ is smooth.
\end{proof}

\begin{rmk}
The difference between planar and cylindrical cobordism is particularly stark for cobordisms with one positive and one negative end, $L^+$ and $L^-$. 
Biran and Cornea's results show that such a planar cobordism induces a quasi-isomorphism $L^+ \simeq L^-$ in $\scrF(X)$; whereas the best we can hope for from a cylindrical cobordism is an equality $[L^+] = [L^-]$ in $K(\scrF(X)^{tw})$.
\end{rmk}

\subsection{Cobordism versus rational equivalence}\label{Sec:cobrat}

Let $Y$ be an algebraic variety. A $0$-cycle $\Upsilon$ on $Y$ is a finite formal linear combination of closed points $\sum n_i [p_i]$ with $p_i \in Y$ and $n_i \in \Z$.  Let $Z_0(Y)$ denote the free abelian group on the closed reduced points of $Y$, whose elements are then by definition the $0$-cycles. 

A zero-dimensional subscheme $Z\subset Y$ has an associated $0$-cycle $[Z] = \sum m_i [Z_i]$ where the $\{Z_i\}$ are the reduced irreducible components of $Z$ and the $m_i$ their multiplicities.

\begin{defn}\label{defn:rational_equivalence}
A $0$-cycle $\Upsilon$ on $Y$ is rationally equivalent to zero if there are one-dimensional irreducible subvarieties $\scrE_1,\ldots,\scrE_t$ in $\bP^1\times Y$, dominant over $\bP^1$, for which
\begin{equation} \label{eqn:rational_equiv}
\Upsilon = \sum_{j=1}^t \, [\scrE_i(0)] - [\scrE_i(\infty)]
\end{equation}
where $\scrE_i(p)$ is the scheme-theoretic fibre over $p\in \bP^1$ of the induced map $\scrE_i\to\bP^1$.  Two cycles $\Upsilon$ and $\Upsilon'$ are rationally equivalent if $\Upsilon-\Upsilon'$ is rationally equivalent to zero. The quotient of $Z_0(Y)$ by this equivalence relation is the Chow group of $0$-cycles $\CH_0(Y)$.\footnote{The definition generalises straightforwardly to higher dimensional cycles (finite formal linear combinations of $k$-dimensional irreducible subvarieties of $Y$, modulo relations coming from $(k+1)$-dimensional subvarieties of $\bP^1 \times Y$ dominant over $\bP^1$), leading to groups $\CH_k(Y)$.} 
\end{defn}

Rational equivalence is not usually discussed for analytic varieties that are not algebraic; for instance, there are complex non-projective surfaces which contain no curves but have line bundles, so the relation between the Picard group and divisors breaks down.  Nonetheless, the definition makes  sense on a rigid analytic space $Y$: an analytic subspace has well-defined irreducible components with integer multiplicities \cite{Conrad}, and one can consider formal finite linear combinations of points modulo the equivalence relation generated by \eqref{eqn:rational_equiv}. If $Y$ is proper and is the analytification of an algebraic variety $Y^{alg}$, then the rigid analytic GAGA principle shows that any analytic curve in $(\bP^1)^{an}\times Y$ lifts to an algebraic curve in $\bP^1\times Y^{alg}$, recovering the previous definition.

\begin{prop} \label{cor:cobordism_gives_rational_equiv}
Let $V\subset \C^*\times X(B)$ be an unobstructed cobordism with multiplicity one ends $\bL^- = \left\{F_{y^-_1}, \ldots, F_{y^-_n}\right\}$ and $\bL^+ =  \left\{F_{y^+_1}, \ldots, F_{y^+_n}\right\}$. 
Then the effective, reduced $0$-cycles $\sum_{j=1}^n [y^-_j]$ and $\sum_{j=1}^n [y^+_j]$ on $Y(B)$ are rationally equivalent. 
\end{prop}

\begin{proof} By Proposition \ref{prop:support_compactifies}, we have an analytic curve given by the union of those one-dimensional irreducible components of $\scrE^0_V \subset (\bP^1)^{an} \times Y(B)$ which contain the ends (i.e. whose tropicalizations map to a neighbourhood of infinity in $\R\times B$).  Since $V$ has multiplicity one ends, this curve intersects the fibre over $0 \in (\bP^1)^{an}$ in the points $y_j^-$ and the fibre over $\infty \in (\bP^1)^{an}$ in the points $y_j^+$ by the proof of Proposition \ref{prop:support_not_empty}. The result follows.
\end{proof}

\begin{rmk}
If $V$ has ends of higher multiplicity, it is still true that the intersection of $\scrE^0_V$ with the fibre over $0/\infty$ is the union of points $y_j^-/y_j^+$ with multiplicities (cf. Remark \ref{rmk:fibre_mult}). 
However these may be isolated points of $\scrE^0_V$, in which case we can draw no conclusion about rational equivalence. 
We are only able to show that they are transverse intersections of a one-dimensional component of $\scrE^0_V$ with the fibre over $0/\infty$ if the ends have multiplicity one.
\end{rmk}

\subsection{Cobordism versus K-theory\label{Sec:rational_equivalence}}

We say that a symplectic manifold $(X,\omega)$ is \emph{homologically mirror}  to an algebraic variety $Y$ over the Novikov field $\Lambda$ if there is an $A_{\infty}$-quasi-equivalence
\[
\scrF(X,\omega)^{perf} \simeq  \EuD(Y)
\] 
(recall the RHS denotes a $dg$-enhancement of the derived category). 
Such an equivalence manifestly yields an isomorphism of Grothendieck groups of the corresponding categories. 
The Grothendieck group of the derived category of $Y$ is isomorphic to $K(Y)$, the algebraic K-theory of $Y$, so combining with Proposition \ref{prop:cob_rel_groth} we obtain 
\begin{equation}
\label{eqn:cob_versus_k}
 \Cob^\unob(X/\C^*) \to K(\scrF(X)^{perf}) \simeq K(Y).
\end{equation}
It is tempting to wonder when this map might be an isomorphism.

One expects that $\Cob^\unob_\fib(X(B))$ should correspond to the subgroup of $K(Y(B))$ generated by skyscraper sheaves of points. 
The latter is isomorphic to $\CH_0(Y(B))$ via the map $\CH_0(Y(B)) \to K\left(Y(B)\right) $ sending $[y]  \mapsto [\mathcal{O}_y]$. 
This is consistent with Proposition \ref{cor:cobordism_gives_rational_equiv}, which suggests a more direct method of comparing $\Cob^\unob_\fib$ with $\CH_0$.

\begin{rmk}
The map $\CH_0(Y) \to K(Y)$ does not extend to a map $\CH_*(Y) \to K(Y)$. 
Rather, one defines the ``topological filtration'' on the Grothendieck group $K(Y)$, by setting $F_j K(Y)$ to be generated by coherent sheaves whose support has dimension $\leq j$; then it is known that there is a natural graded homomorphism from $\CH_*(Y)$ to the associated graded group of the topological filtration, extending $\CH_0(Y) \to K(Y)$, cf. \cite[Example 15.1.5]{Fulton:intersection_theory}. 
\end{rmk}

%\begin{rmk}
%Note that the map from $\CH_0(Y) \to K(Y)$ does not extend to a map $\CH_*(Y) \to K(Y)$.
%Rather,  there is a filtration of the Grothendieck group $K(Y)$ -- the ``topological filtration" -- by setting $F_j K(Y)$ to be generated by coherent sheaves whose support has dimension $\leq j$. We consider the association $\kappa_Y: V \mapsto [\mathcal{O}_V]$ 
%%\begin{equation} \label{eqn:Chow_to_K0}
%%\kappa_Y: V \mapsto [\mathcal{O}_V]
%%\end{equation}
%for a reduced subvariety $V\subset Y$. This map is not additive as a map to $K(\EuD (Y))$, since $[\mathcal{O}_{V\cup W}] \neq [\mathcal{O}_V] + [\mathcal{O}_W]$ unless $V$ and $W$ are disjoint.  However, the discrepancy between these arises from $V\cap W$, which is of lower dimension if $V$ and $W$ don't share any irreducible component. It is known that the map $\kappa_Y$ extends to a natural  well-defined graded homomorphism from $\CH_*(Y)$ to the associated graded group of the topological filtration, cf. \cite[Example 15.1.5]{Fulton:intersection_theory}. 
%\end{rmk}

%As further evidence that $\Cob^\unob(X/\C^*)$ is mirror to $K(Y)$, rather than $\CH_*(Y)$, we offer the following:

\begin{ex} \label{ex:caution}
Consider the tropical affine two-torus constructed in Lemma \ref{lem:mscurvenopol}, which contains a mixed sign tropical curve but no tropical curve. 
Since $Y(B)$ does not contain any analytic divisor (curve), if two elements of $K(Y(B))$ define the same class in $\gr_2 K(Y(B))$ then they differ by an element supported in dimension $0$, and hence have the same first Chern class. 
Since the image of the map $\CH_2(Y(B)) \to \gr_2 K(Y(B))$ is spanned by $[\mathcal{O}_{Y(B)}]$, this means all elements of the image have vanishing first Chern class. On the other hand the mixed sign curve defines a non-trivial class  in $H^1(B;T^*_{\Z}B)$ which lifts to a class $0\neq \eta \in H^1(B;\Aff)$. There is an injection $H^1(B;\Aff) \to H^1(Y(B);\mathcal{O}^*)$ to the Picard group of the two-dimensional analytic space $Y(B)$, so $Y(B)$ has an analytic line bundle $\scrE_\eta$ with non-trivial first Chern class, which cannot lie in the image of the map $\CH_2(Y(B)) \to \gr_2 K(Y(B))$. 
The line bundle $\scrE_\eta$ is mirror to a Lagrangian section $L_\eta$ of $X(B)$, so the classes $[\scrE_\eta] \in K(Y(B))$ and $[L_\eta] \in \Cob^\unob(X(B))$ should correspond under the conjectural isomorphism \eqref{eqn:cob_versus_k}. 
This gives further evidence that the correct mirror to $\Cob^\unob(X(B))$ is $K(Y(B))$ rather than $\CH_*(Y(B))$, since the latter does not `see' $[L_\eta]$.
\end{ex}

\subsection{The mirror to cylindrical cobordism}

The stopped Liouville manifold $(\C^*,\sigma)$ is mirror to the weighted projective line $\bP^1$, so we expect a mirror equivalence
\[ \scrW(\C^* \times X,\sigma \times X) \simeq \EuD\left(\bP^1 \times Y\right)\]
(we remark that in any case where one can prove homological mirror symmetry for $X$ and $Y$ via family Floer theory  \cite{Abouzaid:ICM, Abouzaid:FFFF, Abouzaid:HMS_without_corrections}, one can hope to prove this extension using the modified Fukaya trick from the proof of Proposition \ref{prop:support_compactifies}). 
The objects $\delta^-,\delta^+$ of $\scrW(\C^*,\sigma)$ are mirror to the skyscraper sheaves $\mathcal{O}_0, \mathcal{O}_\infty$ of $\EuD(\bP^1)$ respectively. 
Therefore, the functor 
\[ \scrY(\delta^-,-): \scrW(\C^* \times X, \sigma \times X) \to \scrF(X)^{prop}\]
is mirror to the restriction functor $i_0^*: \EuD(\bP^1 \times Y) \to \EuD(Y)$, where $i_0: Y \hookrightarrow \bP^1 \times Y$ is the inclusion of the fibre over $0 \in \bP^1$. 
Similarly, $\scrY(\delta^+,-)$ is mirror to $i_\infty^*$. 

This suggests the following construction as the mirror to $\Cob^\unob(X/\C^*)$.  
Let $\sim$ denote the equivalence relation on objects of $\EuD(Y)$ generated by (isomorphism of objects together with) $i_0^* \scrE \sim i_\infty^*\scrE$  for any object $\scrE$ of $\EuD(\bP^1 \times Y)$.
The quotient of the set of objects of $\EuD(Y)$ by the equivalence relation $\sim$ is an abelian group $K'(Y)$, with the group operation given by direct sum. 
%It is well-known that this reproduces the algebraic $K$-theory $K(Y)$, but the proof illuminates the connection with rational equivalence, which is one of our central themes:

\begin{lem} \label{lem:recover_K0}
There is a natural isomorphism $K'(Y) \xrightarrow{\sim} K(Y)$, sending $
[\scrE]  \mapsto [\scrE]$.
\end{lem}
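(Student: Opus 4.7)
The plan is to construct homomorphisms in both directions between $K'(Y)$ and $K(Y)$, each acting as $[\scrE] \mapsto [\scrE]$ on the common generators, and then to check that each is well-defined with respect to the defining relations of its target.

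For the map $K'(Y) \to K(Y)$, the task is to show that for every $\scrE \in \EuD(Y \times \bP^1_{m,n})$ one has $[i_0^*\scrE] = [i_\infty^*\scrE]$ in $K(Y)$. My approach is to exploit the standard exceptional collection of line bundles on the Deligne--Mumford stack $\bP^1_{m,n}$ together with K\"unneth, reducing to the case $\scrE = \mathcal{L} \boxtimes F$ with $\mathcal{L}$ a line bundle on $\bP^1_{m,n}$ and $F \in \EuD(Y)$. For such a building block the geometric pullbacks $\mathrm{pt} \to B\mu_m \to \bP^1_{m,n}$ and $\mathrm{pt} \to B\mu_n \to \bP^1_{m,n}$ each send $\mathcal{L}$ to a rank-one $\Lambda$-vector space (forgetting the $\mu_{\bullet}$-action), so $i_0^*\scrE \cong F \cong i_\infty^*\scrE$, and the required equality follows.

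For the reverse map $K(Y) \to K'(Y)$, the task is to verify the Grothendieck relation $[B] = [A] + [C]$ in $K'(Y)$ for every distinguished triangle $A \xrightarrow{f} B \to C \to A[1]$ in $\EuD(Y)$. The key device will be a ``deformation to the cone''. Let $z \colon \mathcal{O}_{\bP^1}(-1) \to \mathcal{O}_{\bP^1}$ be given by the tautological section vanishing at $0$ and nonzero at $\infty$, and set
\[
\scrE := \mathrm{Cone}\bigl(A \boxtimes \mathcal{O}_{\bP^1}(-1) \xrightarrow{f \otimes z} B \boxtimes \mathcal{O}_{\bP^1}\bigr) \in \EuD(Y \times \bP^1),
\]
pulled back via the coarse moduli map $\pi \times \mathrm{id}\colon Y \times \bP^1_{m,n} \to Y \times \bP^1$ to an object of $\EuD(Y \times \bP^1_{m,n})$. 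Base change and exactness of pullback give $i_\infty^*\scrE \cong \mathrm{Cone}(f) = C$ (after normalizing $z(\infty)=1$) and $i_0^*\scrE \cong \mathrm{Cone}(0) = B \oplus A[1]$, yielding $[B] + [A[1]] = [C]$ in $K'(Y)$. A parallel computation, applied to the pushforward of $A$ along the section of the projection at $0$ (pulled up to $\bP^1_{m,n}$ via $\pi$), gives $A \oplus A[1]$ at $0$ by the self-intersection formula with trivial conormal bundle, and $0$ at $\infty$ by base change over disjoint images, producing $[A] + [A[1]] = 0$ in $K'(Y)$. Combining the two relations delivers the desired $[B] = [A] + [C]$.

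The two maps are mutually inverse by construction, since both are the identity on isomorphism classes. I expect the main technical obstacle to be justifying the stacky ingredients---K\"unneth, exceptional collections of line bundles, and derived base change---on the Deligne--Mumford stack $\bP^1_{m,n}$. Pragmatically, most steps can be pushed down to ordinary $\bP^1$ along the coarse moduli map, reducing the content to classical facts on the projective line, and this is the route I would take in writing the argument out in detail.
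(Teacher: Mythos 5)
Your proposal is correct and follows essentially the same route as the paper: a K\"unneth-type reduction to external products to show $[i_0^*\scrE]=[i_\infty^*\scrE]$ in $K(Y)$, and the deformation $\mathrm{Cone}(f\otimes z)$ interpolating between $\mathrm{Cone}(f)$ and $\mathrm{Cone}(0)\cong B\oplus A[1]$ to verify the triangle relations in $K'(Y)$ (the paper twists by $\mathcal{O}(1)$ rather than $\mathcal{O}(-1)$, which is equivalent). Your separate skyscraper computation of $[A]+[A[1]]=0$ is valid but redundant, since it already follows from your cone construction applied to $f=\mathrm{id}_A$.
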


The map $K(\mathbb{P}^1) \otimes_\Z K(Y)    \to K(\bP^1 \times Y) $ sending $
\scrF \otimes \scrG  \mapsto \mathrm{pr}_{\bP^1}^* \scrF \otimes \mathrm{pr}_Y^* \scrG$ is an isomorphism \cite[Theorem 4.5]{Manin1969}, and $i_z^*\left(\mathrm{pr}_{\bP^1}^* \scrF \otimes \mathrm{pr}_Y^* \scrG\right) \simeq \scrF_z \otimes \scrG$ for any inclusion $i_z:Y \hookrightarrow \bP^1 \times Y$ of the fibre over $z$; taking $\scrF$ to be a complex of locally free sheaves and $z = 0,\infty$ yields well-definition of the map $K'(Y) \to K(Y)$. We omit the presumably well-known proof that this is an isomorphism. Lemma \ref{lem:recover_K0} gives further evidence that the natural mirror to $\Cob^{\unob}(X/\C^*)$ is $K(Y)$, and that the map $\Cob^{\unob}(X/\C^*) \to K(\scrF(X)^{perf})$ might be an isomorphism when the mirror to $X$ is algebraic.

\bibliographystyle{amsalpha}
\bibliography{mybibold}

\end{document}